\numberwithin{equation}{section}
\newcommand{\R}{\mathbb{R}}
\newtheorem{Thm}{Theorem}[section]
\newtheorem{Lem}[Thm]{Lemma}
\newtheorem{Prop}[Thm]{Proposition}
\newtheorem{Def}[Thm]{Definition}
\newtheorem{Rem}[Thm]{Remark}
\newtheorem{theo}{Theorem}
\newtheorem{corollary}{Corollary}
\begin{document}

\title
[NLS system with three waves interaction]
{Standing waves for a Schr\"odinger system with three waves interaction}

\author[L. Forcella]{Luigi Forcella}
\address{
\newline\indent
Luigi Forcella, Dipartimento di Matematica, Universit\`a Degli Studi di Pisa, 
\newline\indent  Largo Bruno Pontecorvo, 5, 56127, Pisa, Italy}
\email{luigi.forcella@unipi.it}

\author[X. Luo]{Xiao Luo}
\address{
\newline\indent Xiao Luo, School of Mathematics, Hefei University of Technology,
\newline\indent Hefei, 230009, P. R. China}
\email{luoxiao@hfut.edu.cn}

\author[T. Yang]{Tao Yang}
\address{
\newline\indent Tao Yang, Department of Mathematics, Zhejiang Normal University,
\newline\indent Jinhua, 321004, P. R. China}
\email{yangtao@zjnu.edu.cn}

\author[X. Yang]{Xiaolong Yang}
\address{
\newline\indent School of Mathematics and Statistics, Central China Normal University,
\newline\indent Wuhan, 430079, P. R. China}
\email{yangxiaolong@mails.ccnu.edu.cn}

\begin{abstract}
We study standing waves for a system of nonlinear Schr\"odinger equations with three waves interaction arising as a model for the Raman amplification in a plasma. We consider the mass-critical and mass-supercritical regimes, and we prove existence of ground states along with a synchronized mass collapse behavior. In addition, we show that the set of ground states is stable under the associated Cauchy flow. Furthermore, in the mass-supercritical setting we construct an excited state that corresponds to a strongly unstable standing wave. Moreover, a semi-trivial limiting behavior of the excited state is drawn accurately. Finally, by a refined control of the excited state's energy, we give sufficient conditions to prove global existence or blow-up of solutions to the corresponding Cauchy problem.
\end{abstract}

\maketitle

{\small\small
\keywords {\noindent {\bf Keywords:}
 {NLS system; standing waves; stability.}
\smallskip
\newline
\subjclass{\noindent {\bf 2010 Mathematics Subject Classification:} 35Q55, 35A15, 35B35.

\section{Introduction}

In this paper, we consider a three-components system of nonlinear Schr\"{o}dinger equations related to the Raman amplification in a plasma, as derived by Colin, Colin, and Ohta in \cite{CCO},  which reads as follows:
\begin{equation}\label{eqA0.1}
\begin{cases}
i \partial_{t} \psi_{1}=-\Delta \psi_{1}-\left|\psi_{1}\right|^{p-2} \psi_{1}-\alpha \psi_{3} \overline{\psi}_{2}, \\
i \partial_{t} \psi_{2}=-\Delta \psi_{2}-\left|\psi_{2}\right|^{p-2} \psi_{2}-\alpha \psi_{3} \overline{\psi}_{1}, \\
i \partial_{t} \psi_{3}=-\Delta \psi_{3}-\left|\psi_{3}\right|^{p-2} \psi_{3}-\alpha \psi_{1} \psi_{2}.
\end{cases}
\end{equation}
Here, $\psi_j=\psi_j(t,x)$ with $j=1,2,3$, are complex-valued functions $\psi_j:\R\times \R^N\mapsto \mathbb C$, with $\overline \psi_j$ denoting the complex conjugate,  the space dimension is $N\le 3$,  $\alpha$ is a positive real parameter,
and the power non-linearity $p$ is in the range $2_*\le p< 2^*$, where
\begin{equation*}
\begin{cases}
2_*=2+\frac{4}{N},\\
 2^*=\infty \hbox{\, if \,} N\leq2,\quad  2^*=\frac{2N}{N-2}  \hbox{\, if \,} N=3.
\end{cases}
\end{equation*}
Namely, we consider the mass-critical or mass-supercritical  and energy-subcritical power-type non-linearities. \medskip

It is standard to see that the Cauchy problem associated to \eqref{eqA0.1} is locally well-posed in the energy space, i.e., for a fixed initial datum
\[
(\psi_{0,1}, \psi_{0,2}, \psi_{0,3})(x):=(\psi_1, \psi_2, \psi_3)(0,x)\in H^1(\R^N,\mathbb{C}^3),
\] there exists a unique solution $(\psi_1, \psi_2, \psi_3)\in C([0, T_{\max}),H^1(\R^N,\mathbb{C}^3) ) $, where $T_{\max}>0$ is the positive maximal time of existence (a similar notion can be given for negative times). See the monograph \cite{CT}. Moreover, the blow-up alternative holds true, in the sense that either $T_{\max}=\infty$ (the solution is global), or $T_{\max}<\infty$  and the homogeneous Sobolev norm of the solution diverges as $t\to T_{\max}^-$. More  precisely, if $T_{\max}<\infty$, then  $\lim_{t\to T_{\max}^-}\left(\sum_{i=1}^{3}\|\nabla \psi_i(t)\|^2_{L^2 (\R^N)} \right)=\infty$.

\noindent In addition, the following quantities are conserved along the flow: the energy, defined by
\begin{equation}\label{eq:energy}
E(t)=E(\vec{\psi}(t))=\sum^{3}_{i=1}\left(\frac{1}{2}\|\nabla \psi_{i}(t)\|^2_{L^2(\R^N)}-\frac{1}{p}\|\psi_i(t)\|^p_{L^p(\R^N)}  \right)-\alpha \mathrm{Re}\int_{\R^N} \left(\psi_1\psi_2\overline{\psi}_3\right)(t)dx,
\end{equation}
and the mixed masses
\begin{equation}\label{eq:cons-masses}
\begin{aligned}
Q_1(t)&=Q_1(\vec{\psi}(t))=\|\psi_1(t)\|^2_{L^2(\R^N)}+\|\psi_3(t)\|^2_{L^2(\R^N)} \\  Q_2(t)&=Q_2(\vec{\psi}(t))=\|\psi_2(t)\|^2_{L^2(\R^N)}+\|\psi_3(t)\|^2_{L^2(\R^N)},
\end{aligned}
\end{equation}
where we used the compact notation \[
\vec{\psi}=\vec{\psi}(t,x)=\left(\psi_1(t,x),\psi_2(t,x),\psi_3(t,x)\right)\in H^1(\R^N,\mathbb{C}^3).
\] As usual, conservation means that the previous quantities are not dependent on time, or alternatively $E(t)=E(0)$,  $Q_1(t)=Q_1(0)$, and $Q_2(t)=Q_2(0)$ for any time $t$ in the maximal interval of existence $[0,T_{\max})$. The conservation laws can be showed by a standard regularization argument, see \cite{CT}.
\\
\noindent Furthermore, we note that   \eqref{eqA0.1} can be written as
\begin{equation*}
\partial_t \vec{\psi}(t,x)=-i E'(\vec{\psi}(t,x)),
\end{equation*}
and that
\begin{equation*}
E(e^{i\theta_1}u_1, e^{i\theta_2}u_2, e^{i(\theta_1+\theta_2)}u_3)=E(\vec{u}), \end{equation*}
for any $(\theta_1,\theta_2)\in \R^2$, and any function $\vec u=(u_1,u_2,u_3) \in H^1(\R^N,\mathbb{C}^3)$. \medskip

The main purpose of this paper is to study existence and stability properties of \textit{standing waves solutions} to \eqref{eqA0.1}. Let us recall that a standing wave for \eqref{eqA0.1} is a solution of the form $\left(\psi_1(t,x),\psi_2(t,x),\psi_3(t,x)\right)$ with $\psi_1(t,x)=e^{i\lambda_1t}u_1(x)$, $\psi_2(t,x)=e^{i\lambda_2t}u_2(x)$ and $\psi_3(t,x)=e^{i\lambda_3t}u_3(x)$, where $\lambda_1,\lambda_2,\lambda_3$ are real numbers and $\vec{u}\in H^1(\R^N,\mathbb{C}^3)$ satisfies the system of elliptic equations
\begin{equation}\label{eqA0.2}
\begin{cases}
-\Delta u_{1}+ \lambda_1u_{1}=\left|u_{1}\right|^{p-2} u_{1}+\alpha u_{3} \overline{u}_{2}, \\
-\Delta u_{2}+ \lambda_2u_{2}=\left|u_{2}\right|^{p-2} u_{2}+\alpha u_{3} \overline{u}_{1}, \\
-\Delta u_{3}+ \lambda_3u_{3}=\left|u_{3}\right|^{p-2} u_{3}+\alpha u_{1} u_{2},
\end{cases}
\end{equation}
where $\lambda_3=\lambda_1+\lambda_2$.\medskip

Under certain conditions, the existence, uniqueness and multiplicity of solutions of \eqref{eqA0.2} have been studied by many authors. We refer the reader to \cite{LO,CZT,PA,WJ} and the references therein.  In particular, the authors of \cite{CCO,CCO1} studied the orbital stability of  solutions (semi-trivial standing waves) for system \eqref{eqA0.1} of the form $(e^{i\omega t}u,0,0)$,  $(0,e^{i\omega t}u,0)$, $(0,0,e^{i\omega t}u)$, (such kind of solutions, with two trivial components, are called \textit{scalar solutions}) where $\omega>0$ and $u\in H^1(\R^N,\R)$ is the unique positive radial solution of
\begin{equation*}
-\Delta u+\omega u=|u|^{p-2}u \quad \text{in}\quad  \R^N.
\end{equation*}
In \cite{CC, CCO}, it is proved that when $2<p<2_*$, $(e^{i\omega t}u,0,0)$ and $(0,e^{i\omega t}u,0)$ are orbitally stable for any $\alpha>0$, while $(0,0,e^{i\omega t}u)$ is orbitally stable if $0<\alpha<\bar \alpha$, and it is orbitally unstable if $\alpha>\bar \alpha$ for a suitable positive constant $\bar \alpha=\bar\alpha(N,p,\omega)$ (see also \cite{MM} in the higher dimensions $N = 4, 5$).  \\
In \cite{AH}, it is instead proved the existence of stable standing waves (vector solutions) for the system \eqref{eqA0.1} with $N=1$, $2<p<6=2_*$ (i.e., the mass-subcritical case) and $\alpha>0$, by minimizing the energy $E(\vec{u})$ on the manifold
\begin{equation}\label{def:S12}
S(a_1,a_2):=\left\{\vec{u}\in H^1(\R^N,\mathbb{C}^3) \hbox{ s.t. } \int_{\R^N} |u_1|^2+|u_3|^2 dx=a^2_1,\quad \int_{\R^N} |u_2|^2+|u_3|^2 dx=a^2_2\right\},
\end{equation}
where $a_1,a_2>0$.
The results of \cite{AH} have been generalized in \cite{KO} to the higher dimensional case and to the model \eqref{eqA0.1} with potentials (see also \cite{OY}). It is worth mentioning that in \cite{PA}, the existence of non-scalar solutions were proved by minimizing the action function on the Nehari manifold, provided the coupling parameter $\alpha$ is large enough. \medskip

In this paper, illuminated by \cite{JL} and \cite{Soave1}, we aim to consider standing waves and their stability for the system \eqref{eqA0.1} in the  mass-critical or mass-supercritical regime and the energy-subcritical ones, namely we cover the range  of non-linearities $2_*\leq p<2^*$, where the corresponding energy functional $E(\vec{u})$ is not always bounded from below on $S(a_1,a_2)$. Note that the  coupling terms are of mass-subcritical type and sign-indefinite, then we are dealing with a special mass-mixed case (i.e., the combination of mass-subcritical and mass-supercritical terms), which is more complicated. \medskip

Before introducing the main results, we recall some definition (see also \cite{BeJe}).
\begin{Def}
We say that $\vec{u}_0$ is a ground state of \eqref{eqA0.2} on $S(a_1,a_2)$ if
\begin{equation*}
dE \vert_{S(a_1,a_2)}(\vec{u}_0)=0 \quad \hbox { and } \quad E(\vec{u}_0)=\inf \left\{E(\vec{u}) \hbox{ s.t. } dE\vert_{S(a_1,a_2)}(u)=0\ \hbox{and}\ \vec{u}\in S(a_1,a_2)\right\}.
\end{equation*}
We say that $\vec{v}_0$ is an excited state of \eqref{eqA0.2} on $S(a_1,a_2)$ if
\begin{equation*}
dE\vert_{S(a_1,a_2)}(\vec{v}_0)=0 \quad \text { and } \quad E(\vec{v}_0)>\inf \left\{E(\vec{u}) \hbox{ s.t. } dE\vert_{S(a_1,a_2)}(u)=0\ \text{and}\ \vec{u}\in S(a_1,a_2)\right\}.
\end{equation*}
The set of ground states will be denoted by $\mathcal G=\mathcal G_{p,\alpha,N}$.
\end{Def}
We emphasize, as in \cite{AH}, that  variational problems with the energy restricted on the manifold $S(a_1,a_2)$ is particularly appropriate for the study of the stability properties of the ground states, as both the energy and the partial mass functionals $Q_1$ and $Q_2$ are conserved along the flow generated by \eqref{eqA0.1}.

\begin{Def} \rm
\textup{(i)} We say that the set $\mathcal G$ is orbitally stable if $\mathcal G\neq \emptyset$ and for any $\varepsilon>0$, there exists a $\delta>0$ such that,  provided that an initial datum $\vec{\psi}(0)=\left(\psi_1(0),\psi_2(0),\psi_3(0)\right)$  satisfies
\begin{equation*}
\inf_{\vec{u}\in \mathcal G}\|\vec{\psi}(0)-\vec{u}\|_{H^1(\R^N, \mathbb C^3)}< \delta,
\end{equation*}
then the corresponding solution $\vec{\psi}$ to \eqref{eqA0.1} is globally defined and
\begin{equation*}
\inf_{\vec{u}\in \mathcal G}\|\vec{\psi}(t)-\vec{u}\|_{H^1(\R^N, \mathbb C^3)}<\varepsilon \quad\quad \forall t\in\R.
\end{equation*}
\smallskip

\noindent \textup{(ii)} A standing wave $(e^{i\lambda_1t}u_1,e^{i\lambda_2t}u_2,e^{i\lambda_3t}u_3)$ is said to be strongly unstable if for any $\varepsilon>0$ there exists $\vec{\psi}_0\in H^1(\R^N,\mathbb{C}^3)$ such that $\|\vec{u}-\vec{\psi}_0\|_{H^1(\R^N, \mathbb C^3)}< \varepsilon$, and $\vec{\psi}(t)$ blows-up in finite time, namely $T_{\max}<\infty$.
\end{Def}

Throughout this article, we are not only interested in proving existence of standing waves and their stability properties, but also in proving suitable asymptotic results for different regimes depending on the involved parameters $\alpha$, $a_1,$ and $a_2$. To this aim, before stating our first main result, we introduce another minimization problem:
\begin{equation}\label{def:m0}
m_0(a_1,a_2):=\inf_{\vec{u}\in S(a_1,a_2)}E_0(\vec{u}),
\end{equation}
where
\begin{equation*}
E_0(\vec{u}):=\frac{1}{2}\sum^{3}_{i=1}\|\nabla u_i\|^2_{L^2(\R^N)}-\mathrm{Re}\int_{\R^N} u_1u_2\overline{u}_3dx.
\end{equation*}

We can now state our main result regarding existence, stability, and mass-synchronised  asymptotic of the ground states.
\begin{theo}\label{th1.1}
Let $N\le 3$,  $2_*\le p<2^*$, and $\alpha,a_1, a_2>0$. There exists a positive explicit constant $D = D(N, p,\alpha)$ such that if $\max\{a_1,a_2\}<D$, we have:\smallskip

\noindent \textup{(i)} $\mathcal G$ is nonempty, i.e., there exists a ground state of \eqref{eqA0.2} on $S(a_1,a_2)$;\smallskip

\noindent\textup{(ii)} the set $\mathcal G$ is orbitally stable;\smallskip

\noindent\textup{(iii)} fix $\alpha > 0$ and let $\vec{u}\in \mathcal G$. Assume that $a_2=a_1\to 0$, then we have
\begin{equation*}
\sup_{\vec{u}\in \mathcal G}\|\vec{u}(x)-\kappa\alpha^{-1}\vec{v}_0(\kappa^{\frac{1}{2}}x)\|_{H^1(\R^N,\mathbb{C}^3)}= o(1),\end{equation*}
where $\vec{v}_0$ is a minimizer for $m_0(\sqrt{2}\|w\|_{L^2(\R^N)},\sqrt{2}\|w\|_{L^2(\R^N)})$ (see definition \eqref{def:m0}), the scaling constant $\kappa=\left(\frac{\alpha a_1}{\sqrt{2}\|w\|_{L^2(\R^N)}} \right)^{\frac{4}{4-N}}$, and $w$ is the unique, real   positive solution of $-\Delta w+w=w^2$;\smallskip

\noindent\textup{(iv)} if $\vec{u}\in \mathcal G$ then $\sum^{3}_{i=1}\|\nabla u_i\|^2_{L^2(\R^N)}\to 0$ as $\alpha \to 0$.
\end{theo}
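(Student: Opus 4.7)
\medskip
\noindent The plan has three stages: establish the local well-geometry of $E$ on $S(a_1,a_2)$ via Gagliardo--Nirenberg; use concentration-compactness to obtain minimizers and deduce stability; and analyse the two asymptotic limits by suitable rescalings.

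\medskip
\noindent\emph{Stage 1: variational geometry.} Writing $t(\vec u):=\sum_i\|\nabla u_i\|_{L^2}^2$ and $a:=\max\{a_1,a_2\}$, the Gagliardo--Nirenberg inequality yields
\[
\sum_i\|u_i\|_{L^p}^p\lesssim a^{p(1-\gamma_p)}t^{p\gamma_p/2},\qquad \Bigl|\mathrm{Re}\int_{\R^N}u_1u_2\overline u_3\,dx\Bigr|\lesssim a^{(6-N)/2}t^{N/4},
\]
where $\gamma_p=N(p-2)/(2p)$. Since $p\gamma_p/2\ge 1$ (from $p\ge 2_*$) and $N/4<1$ (from $N\le 3$), the lower bound $h_{a,\alpha}(t):=\tfrac12 t-C_1 a^{p(1-\gamma_p)}t^{p\gamma_p/2}-\alpha C_2 a^{(6-N)/2}t^{N/4}$ satisfies $h_{a,\alpha}'(0^+)=-\infty$, and for $a<D$ (an explicit threshold depending on $N,p,\alpha$) $h_{a,\alpha}$ admits a strictly negative local minimum $h_{a,\alpha}(t_0)<0$ at some $t_0>0$ and a positivity barrier $h_{a,\alpha}(t_1)>0$ at some $t_1>t_0$. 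This identifies the local well $V:=\{\vec u\in S(a_1,a_2):t(\vec u)<t_1\}$ and the constrained problem $m(a_1,a_2):=\inf_V E<0$.

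\medskip
\noindent\emph{Stage 2: parts (i) and (ii).} A minimizing sequence $(\vec u_n)\subset V$ is automatically $H^1$-bounded. Replacing $|u_i^n|$ by its Schwarz rearrangement (with phases aligned so that the coupling contributes positively) preserves the masses and the $L^p$ norms, does not increase the Dirichlet energy, and makes the components radial and non-negative; combined with $m<0$, this rules out the vanishing scenario of Lions' lemma. Dichotomy is excluded by strict subadditivity $m(a_1,a_2)<m(b_1,b_2)+m(a_1-b_1,a_2-b_2)$ for $0<b_i<a_i$, which stems from a scaling argument exploiting the mismatch between the subcritical coupling and the (super)critical $L^p$ term. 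Hence $\vec u_n\to \vec u^*\in S(a_1,a_2)$ strongly in $H^1$, giving via Lagrange multipliers a solution of \eqref{eqA0.2} with $\lambda_3=\lambda_1+\lambda_2$, which proves (i). For (ii), conservation of $E$, $Q_1$, $Q_2$ together with continuity of $m$ in $(a_1,a_2)$ and the compactness of minimizing sequences yields orbital stability by the standard Cazenave--Lions argument; global existence follows from the trapping property $t(\vec\psi(t))<t_1$ that persists along the flow provided the initial datum is sufficiently close to $\mathcal G$.

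\medskip
\noindent\emph{Stage 3: parts (iii) and (iv).} For (iii), set $\vec v(y):=\alpha\kappa^{-1}\vec u(\kappa^{-1/2}y)$ with $\kappa$ as in the statement, chosen precisely so that $\vec v\in S(\sqrt 2\|w\|_{L^2},\sqrt 2\|w\|_{L^2})$. A direct computation gives
\[
\alpha^2\kappa^{-(6-N)/2}E(\vec u)=E_0(\vec v)-\alpha^{2-p}\kappa^{p-3}\sum_i\frac{\|v_i\|_{L^p}^p}{p},
\]
and since $p>3$ for $p\ge 2_*$ and $N\le 3$, the remainder term vanishes as $\kappa\to 0$ (equivalently $a_1\to 0$). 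Minimality of $\vec u$ thus propagates to $\vec v$ as an asymptotic minimizer of $E_0$ on $S(\sqrt 2\|w\|,\sqrt 2\|w\|)$; strong $H^1$-convergence to a minimizer $\vec v_0$ of $m_0$ follows from a Lions-type concentration argument based on the sharp Gagliardo--Nirenberg inequality $\|\varphi\|_{L^3}^3\le C_*\|\varphi\|_{L^2}^{(6-N)/2}\|\nabla\varphi\|_{L^2}^{N/2}$ whose optimizer is $w$. For (iv), solving $h_{a,\alpha}'(t_0(\alpha))=0$ perturbatively as $\alpha\to 0$ yields $t_0(\alpha)\sim \alpha^{4/(4-N)}\to 0$; since the negativity basin of $h_{a,\alpha}$ in the $t$-variable contracts to $\{0\}$, a ground state $\vec u_\alpha$ is forced to satisfy $t(\vec u_\alpha)\to 0$.

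\medskip
\noindent The principal obstacle is the mass-collapse analysis in (iii): pinning down the correct rescaling, controlling the $L^p$ perturbation uniformly in the vanishing parameter, and extracting strong $H^1$-convergence of $\vec v$ to a minimizer of $m_0$ (rather than to a translated or partially vanishing profile) all require delicate interplay between several competing scales. Establishing the strict subadditivity used in Stage 2 is standard but technically non-trivial in the mass-mixed setting.
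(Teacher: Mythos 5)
Your overall architecture matches the paper's: the same Gagliardo--Nirenberg lower bound $E(\vec u)\ge h\bigl(\rho\bigr)$ with $\rho^2=\sum_i\|\nabla u_i\|_2^2$ produces the local well (the paper's Lemma \ref{lem2.11}, with the threshold $D$ of \eqref{def:D}), the same trapping-plus-Cazenave--Lions scheme gives stability, the rescaling in (iii) and the scaling identity you write are exactly those of Lemma \ref{lem3.6}, and your argument for (iv) (the negativity basin of $h$ shrinking like $\alpha^{4/(4-N)}$) is equivalent to the paper's use of \eqref{ba1}. The genuine methodological difference is in the compactness step: you invoke Lions' concentration-compactness with strict subadditivity of $m(a_1,a_2)$ under splitting of both masses, whereas the paper works in the radial class after Schwarz rearrangement (justifying the phase alignment carefully, since the coupling is sign-indefinite, via the constraint set $\mathcal M=\{\mathrm{Re}\int u_1u_2\overline u_3>0\}$), extracts a Palais--Smale sequence by Ekeland, uses the compact embedding $H^1_r\hookrightarrow L^q$, rules out semitrivial weak limits by a case analysis on the system's structure, and recovers the full mass constraints from the identity \eqref{d51} together with positivity of the Lagrange multipliers. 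The paper's route avoids proving subadditivity for existence (though it still invokes it for stability, citing \cite{KO,Soave1}); your route avoids the Pohozaev-manifold bookkeeping $\mathcal P^{\pm}$ but must establish the two-mass subadditivity in a mass-mixed, sign-indefinite setting, which is not automatic. Note also that the paper treats $p=2_*$ separately (there $E$ is coercive on all of $S(a_1,a_2)$ and the minimum is global), so your uniform ``local minimum plus barrier'' picture degenerates at the endpoint.

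One step in your Stage 3 is genuinely incomplete rather than merely compressed: for the remainder $\alpha^{2-p}\kappa^{p-3}\sum_i\|v_i\|_{L^p}^p$ to vanish you need $\sum_i\|v_i\|_{L^p}^p$ bounded as $a_1\to0$, i.e.\ the sharp a priori estimate $\sum_i\|\nabla u_i\|_{2}^2\sim a_1^{2(6-N)/(4-N)}\sim\kappa^{(6-N)/2}$; the crude bound $\sum_i\|\nabla u_i\|_2^2<(\rho^*)^2$ coming from membership in the well is useless here because $\kappa^{(N-6)/2}\to\infty$. The paper obtains this estimate by first proving the comparison bound $m^+(a_1,a_1)<3m_0(a_1/\sqrt 2)$ (Lemma \ref{lem3.5}, testing $E$ with three copies of the soliton of the quadratic problem \eqref{g1}) and combining it with $P(\vec u)=0$ to get the two-sided bound \eqref{ba1}. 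You flag this as the principal obstacle but supply no mechanism for it; without some such upper bound on the ground-state energy at the correct scale, the argument for (iii) does not close. A smaller omission of the same kind: you should also verify that the strong limit in Stage 2 has all three components nontrivial and lies in $\mathcal M$ (otherwise it solves a decoupled problem), which the paper does via the Pohozaev identity and the structure of the system.
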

We comment on the results given in   Theorem  \ref{th1.1} above.
\begin{Rem}\rm
To the best of the authors' knowledge, this is the first result dealing with the existence and stability/instability results of standing waves for the Schr\"odinger system with three waves interaction in  the mass-critical/supercritical non-linearities. Moreover, it is worth mentioning that our result are not perturbative, indeed the constant $D$ in the statement of Theorem \ref{th1.1} is given by
\begin{equation}\label{def:D}
D:=\left(\frac{3(p\gamma_p-2)}{\alpha (2p\gamma_p-N) C^3(N,p)}\right)^{\frac{N(p-2)-4}{4(p-3)}}
\left(\frac{p(4-N)}{2(2p\gamma_p-N)C^p(N,p)}\right)^{\frac{4-N}{4(p-3)}},
\end{equation}
where $C(N,p)$ is the best constant in the following Gagliardo-Nirenberg inequality,
\begin{equation}\label{a1}
 \|u\|_{L^p(\R^N)} \leq C(N,p) \|\nabla u\|_{L^2(\R^N)}^{\gamma_p} \|u\|_{L^2(\R^N)}^{1-\gamma_p}, \quad \forall  u \in {H}^{1}(\R^N,\mathbb{C}),
\end{equation}
with
\begin{equation}\label{def:gammap}
\gamma_p=\frac{N(p-2)}{2p}, \quad p\in [2,2^*).
\end{equation}
\end{Rem}

\begin{Rem}\rm
Theorem \ref{th1.1} shows that a ground state exists even if $E|_{S(a_1,a_2)}$ is unbounded from below, and, for $a_1,a_2$ small enough, the ground state is indeed a least action solution which reaches the infimum of the $C^1$ action functional $\displaystyle J(\vec{u})=E(\vec{u})+\frac{1}{2}\sum^{3}_{i=1}\lambda_i\|u_{i}\|^2_{L^2(\R^N)}$ among all nontrivial solutions to \eqref{eqA0.2} (see \cite{PA,WJ} for the existence of least action solutions), where ${\lambda_i}$ ($i=1,2,3$) are the Lagrange multipliers corresponding to the ground state. 
\end{Rem}
\begin{Rem}\rm
The set $\mathcal G$, containing a priori complex-valued ground states, has the following structure:
\begin{equation*}
\mathcal G=\left\{ (e^{i\theta_1}u_1,e^{i\theta_2}u_2,e^{i(\theta_1+\theta_2)}u_3) \quad \hbox{s.t.} \quad \theta_1,\theta_2\in \R \right\},
\end{equation*}
where $(u_1,u_2,u_3)\in S(a_1,a_2)$ is a positive, radial ground state of \eqref{eqA0.2}. See the proof of Theorem \ref{th1.1} later on. Since now on, we refer to a radial $\vec u$ is each component is radial.
\end{Rem}
\begin{Rem}\rm
The fact that $\mathcal G$ is orbitally stable indicates that the  coupling term leads to the stabilization of the  standing waves corresponding to \eqref{eqA0.1}. It is worth recalling that for the Schr\"odinger  equation $i \partial_{t} \psi=-\Delta \psi-\left|\psi\right|^{p-2} \psi$, for $p$ in the mass-supercritical regime, the standing wave $\psi(t,x)=e^{i\lambda t}u(x)$ is  strongly unstable, see \cite{CT}, where $u\in H^1(\R^N)$ is the unique positive radial solution of $-\Delta u+ \lambda u=\left|u\right|^{p-2} u$ for $\lambda>0$.
\end{Rem}
\begin{Rem}\rm
In proving the existence of ground states, due to the indefinite sign of the three wave interaction term in the  energy functional, we need to introduce an additional constrain given by an inequality. This in turn makes appear further difficulties in proving the compactness of related minimizing sequences, and is different from constrained variational problems with a sign-definite type structure, see for example  \cite{BJS,JL,MS21,Soave1,WW}.
In order to get the synchronized mass collapse behavior of the ground state of \eqref{eqA0.2} on $S(a_1,a_2)$ (namely, the claim of point \textup{(iii)} in Theorem \ref{th1.1}), we  prove the existence of ground states for the limit system
\begin{equation}\label{n1}
\begin{cases}
-\Delta u_{1}+\lambda_1u_{1}=u_{3}\overline{u}_{2},\\
-\Delta u_{2}+\lambda_2u_{2}=u_{3}\overline{u}_{1},\\
-\Delta u_{3}+(\lambda_1+\lambda_2)u_{3}=u_{1}u_{2},
\end{cases}
\end{equation}
under the constraints
\begin{equation}\label{eq:constr}
Q_1(\vec{u})=a^2_1 \quad \hbox{ and } \quad Q_2(\vec{u})=a^2_2.
\end{equation}
If $\lambda_1=\lambda_2$, the
uniqueness of minimizer for $m_0(a_1,a_2)$ (see \eqref{def:m0}) and ground state for \eqref{n1} are proved in \cite{WJ,LO}. Moreover, for $N=3$, by replacing the constraints in \eqref{eq:constr} by three independent prescribed mass constraints, a modification of the proof of Lemma  \ref{lem4.2} gives a positive answer to the open problem proposed by Kurata and Osada in \cite[Remark 4]{KO}. See Remark \ref{rem:KO-sol}.
\end{Rem}

We now give the results related to the existence and properties of excited states. In what follows, we consider mass-energy intercritical non-linearities, namely $2_*<p<2^*$.
\begin{theo}\label{th1.2}
Let $N\le 3$,  $2_*<p<2^*$, $\max\{a_1,a_2\}<D$, and $a_1, a_2>0$. There exists $\alpha_0=\alpha_0(a_1,a_2)>0$ such that, for any $\alpha>\alpha_0$:\smallskip

\noindent \textup{(i)} there exists an excited state $\vec{v}=(v_1,v_2,v_3)\in S(a_1,a_2)$, with associated Lagrange multipliers $\lambda_1,\lambda_2>0$;\smallskip

\noindent \textup{(ii)} let $a_1> 0$ and  $a_2\to 0^+$, then
we have
\begin{equation*}
\left(\tilde{\kappa}^{-\frac{1}{p-2}}v_1(\tilde{\kappa}^{-\frac{1}{2}}x),v_2(x),v_3(x)\right)\to (w_p,0, 0) \quad \  \text{in}\quad H^{1}(\R^N,\mathbb{C}^3),
\end{equation*}
where $\tilde{\kappa}=\left(\frac{a^2_1}{\|w_p\|^2_{L^2(\R^N)}} \right)^{\frac{p-2}{2-p\gamma_p}}$ and $w_p$ is the unique, positive solution of $-\Delta w+w=|w|^{p-2}w$.
\end{theo}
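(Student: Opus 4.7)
The excited state cannot be produced by minimising $E$ on $S(a_1,a_2)$ because $E$ is unbounded below in the mass-supercritical regime. My plan is to construct it as a constrained minimax, in the spirit of \cite{JL,Soave1}. The $L^2$-preserving dilation $(s\star\vec u)(x):=s^{N/2}\vec u(sx)$ keeps $S(a_1,a_2)$ invariant, and the fibre map
\begin{equation*}
\psi_{\vec u}(s):=E(s\star\vec u)=\frac{s^2}{2}A(\vec u)-\frac{s^{p\gamma_p}}{p}B(\vec u)-\alpha s^{N/2}C(\vec u),
\end{equation*}
with $A(\vec u):=\sum_i\|\nabla u_i\|_{L^2}^2$, $B(\vec u):=\sum_i\|u_i\|_{L^p}^p$ and $C(\vec u):=\mathrm{Re}\int_{\R^N}u_1u_2\overline u_3\,dx$, exhibits (because $N/2<2<p\gamma_p$ when $N\le 3$ and $p>2_*$) exactly two positive critical points on the open set $\{C>0\}$ for $\alpha$ large: a local minimum $t_-(\vec u)$ and a strict local maximum $t_+(\vec u)>t_-(\vec u)$. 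Accordingly, the Pohozaev set $\mathcal P(a_1,a_2):=\{P=0\}\cap S(a_1,a_2)$, with $P(\vec u):=\psi_{\vec u}'(1)=A-\gamma_pB-\alpha\tfrac{N}{2}C$, splits as $\mathcal P^+\sqcup\mathcal P^-$, and the ground state from Theorem \ref{th1.1} lies on $\mathcal P^+$.

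For \textbf{part (i)}, I would define the excited-state level $M(a_1,a_2):=\inf_{\vec u\in\mathcal P^-(a_1,a_2)}E(\vec u)$ and prove it is attained. Test functions built by rescaling Gagliardo--Nirenberg extremals show $\mathcal P^-(a_1,a_2)\neq\emptyset$ for $\alpha>\alpha_0(a_1,a_2)$ and give the separation $M(a_1,a_2)>m(a_1,a_2)$, where $m(a_1,a_2)$ is the ground-state level. A Palais--Smale sequence $\vec u_n$ at level $M(a_1,a_2)$ satisfying the extra information $P(\vec u_n)\to 0$ is produced by combining Ekeland's variational principle on $\mathcal P^-$ with the pull-back through the homeomorphism $(s,\vec u)\mapsto s\star\vec u$, as in \cite{JL}. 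The Pohozaev information yields $H^1$-boundedness. The crux is compactness: the trilinear coupling being sign-indefinite obstructs the strict-subadditivity tools of the sign-definite case, so I would run a concentration--compactness scheme modulo translations, excluding vanishing via a quantitative lower bound $C(\vec u_n)\ge c_0>0$ forced on $\mathcal P^-$ for $\alpha$ large, and excluding dichotomy by comparison with scalar splittings. Positivity of the Lagrange multipliers $\lambda_1,\lambda_2$ is then extracted by combining $P(\vec v)=0$ with the three Nehari-type identities obtained by testing each equation of \eqref{eqA0.2} against the corresponding $v_i$; the resulting linear relations in $(\lambda_1,\lambda_2)$ have strictly positive right-hand sides when $\alpha>\alpha_0$.

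For \textbf{part (ii)}, fix $a_1>0$ and pick $a_2^{(n)}\to 0^+$ with corresponding excited states $\vec v^{(n)}\in S(a_1,a_2^{(n)})$. The identity $\|v_2^{(n)}\|_{L^2}^2+\|v_3^{(n)}\|_{L^2}^2=(a_2^{(n)})^2\to 0$ forces $v_2^{(n)},v_3^{(n)}\to 0$ in $L^2$. I would first identify the limit $M(a_1,a_2^{(n)})\to M_\infty(a_1)$, where $M_\infty(a_1)$ is the mass-constrained mountain-pass level for $-\Delta w+w=|w|^{p-2}w$ at $\|w\|_{L^2}^2=a_1^2$, attained by the rescaling of $w_p$; the sharp scaling $\tilde\kappa=(a_1^2/\|w_p\|_{L^2}^2)^{(p-2)/(2-p\gamma_p)}$ arises by inserting the rescaled soliton into the fibre map. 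Setting $\tilde v_1^{(n)}(x):=\tilde\kappa^{-1/(p-2)}v_1^{(n)}(\tilde\kappa^{-1/2}x)$ normalises the expected limit equation to $-\Delta\tilde v_1+\tilde v_1=|\tilde v_1|^{p-2}\tilde v_1$. Testing the equations for $v_2^{(n)},v_3^{(n)}$ against themselves, together with Gagliardo--Nirenberg interpolation ($v_j^{(n)}\to 0$ in $L^2$ with $H^1$-boundedness forces $\|v_j^{(n)}\|_{L^p}\to 0$ and hence the trilinear coupling term to vanish), yields $\|\nabla v_j^{(n)}\|_{L^2}\to 0$ for $j=2,3$. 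Passing to the weak $H^1$-limit of $\tilde v_1^{(n)}$ (after a suitable translation) and invoking the uniqueness of the positive radial solution of $-\Delta w+w=|w|^{p-2}w$ identifies the limit with $w_p$; convergence of the mass $\|\tilde v_1^{(n)}\|_{L^2}^2\to\|w_p\|_{L^2}^2$ together with $E(\vec v^{(n)})\to E_\infty(w_p)$ upgrades weak to strong $H^1$-convergence. The main obstacle throughout is the compactness at level $M(a_1,a_2)$: the sign-indefinite coupling precludes rearrangement and standard strict-subadditivity, so a careful concentration--compactness analysis modulo translations, leveraging the lower bound on $C$ on $\mathcal P^-$, is the key technical step closing both the existence and the limiting argument.
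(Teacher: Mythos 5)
There is a genuine gap in part (i), and it sits exactly where the hypothesis $\alpha>\alpha_0$ must be used. Your outline never establishes the key energy estimate
\begin{equation*}
0<m^{-}(a_1,a_2)=\inf_{\vec{u}\in \mathcal{P}^{-}_{a_1,a_2}}E(\vec{u})<\min\left\{m(a_1),m(a_2)\right\},
\end{equation*}
where $m(a_1),m(a_2)$ are the \emph{scalar} mountain-pass levels of \eqref{b1}. This strict upper bound is obtained (Lemma \ref{lem3.1}) by testing with a triple of scalar solitons of masses $b^2$, $a_1^2-b^2$, $a_2^2-b^2$: for $s\ge s_0$ the coupling contributes $-\alpha K s_0^{N/2}$, which drags $\max_s E(s\star\vec u)$ below $\min\{m(a_1),m(a_2)\}$ once $\alpha>\alpha_0$. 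It is this inequality, not anything about the geometry of $\mathcal P^-$, that (a) is the true source of the threshold $\alpha_0$, and (b) kills the semi-trivial weak limits in the compactness step: if the Palais--Smale sequence converges weakly to a profile with only one nonzero component, the Brezis--Lieb splitting forces the level to be at least $m(\|u_i\|_2)\ge\min\{m(a_1),m(a_2)\}$, contradicting the bound above. The separation you do state, $M(a_1,a_2)>m(a_1,a_2)$ with $m$ the ground-state level, is trivially true ($m<0<M$) and carries no information. Your two substitute mechanisms for using $\alpha$ large are also not viable: $\mathcal P^-_{a_1,a_2}\neq\emptyset$ for every $\alpha>0$ once $\max\{a_1,a_2\}<D$ (Lemma \ref{lem2.3} puts $\sigma_{\vec u}\star\vec u$ there for any $\vec u\in S(a_1,a_2)\cap\mathcal M$), and the claimed uniform lower bound $C(\vec u)\ge c_0>0$ on $\mathcal P^-_{a_1,a_2}$ is false: taking $u_1,u_2$ on their scalar Pohozaev manifolds with masses $a_1^2,a_2^2$ and $u_3=0$ gives an element of $\mathcal{P}^-_{a_1,a_2}$ with $C=0$, since $\Psi''_{\vec u}(1)=(2-p\gamma_p)\sum_i\|\nabla u_i\|_2^2<0$. (Vanishing, by contrast, is excluded simply because it would force $E\to 0$ while $m^->0$.)

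Two further remarks. First, the paper sidesteps concentration--compactness modulo translations entirely: since Schwarz rearrangement decreases $E(\sigma\star\cdot)$ for every $\sigma$ and hence decreases the fibre maximum, one gets $\inf_{\mathcal P^-_{r,a_1,a_2}}E=\inf_{\mathcal P^-_{a_1,a_2}}E$ and can work in $H^1_r$, where the compact embedding into $L^r$, $2<r<2^*$, replaces your dichotomy analysis; your translation-invariant route could in principle work but is considerably harder to close without the radial reduction. Second, your part (ii) is essentially the paper's argument (Lemma \ref{lem4.4}), but it too leans on the missing estimate: identifying the limit level as $m(a_1)$ and ruling out $u_1\equiv 0$ uses the continuity of $m^-$ together with $0<m^-<\min\{m(a_1),m(a_2)\}$, so the gap in part (i) propagates there as well.
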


\begin{Rem}\rm
Theorem \ref{th1.2} together with Theorem \ref{th1.1} yields the multiplicity of standing waves for problem \eqref{eqA0.1}. This indicates that the coupling term not only makes the ground states stable, but also enriches the solutions set. See the first paragraph of Subsection \ref{sub:phys} for a description of what happens from a physical point of view.
\vskip1mm
\end{Rem}
\begin{Rem}\rm
The condition $\max\{a_1,a_2\}<D$ in Theorem \ref{th1.1} and Theorem \ref{th1.2} not only ensures that the corresponding energy functional $E$ admits a convex-concave geometry, but also guarantees the existence of a natural constraint (the Pohozaev-Nehari manifold, see later on in the paper), on which the critical points of $E$ are indeed nontrivial solutions to the problem \eqref{eqA0.2}. The condition $\alpha>\alpha_0$ is used for a better control of the energy level which excludes semi-trivial solutions. Point \textup{(ii)} of Theorem \ref{th1.2} draws an accurate semi-trivial limiting behavior of the excited states as portion of the mass vanishes. The transition from mass-supercritical to mass-critical regime dramatically changes the geometry of $E|_{S(a_1,a_2)}$, preventing the appearance of the excited state in the latter case. Moreover, if $p=2_*$, similarly to the proof of point \textup{(ii)} in Theorem \ref{th1.2},  the same semi-trivial limiting behavior of ground states obtained in Theorem \ref{th1.1} holds if and only if $a_1^2=\|w_p\|^2_{L^2(\R^N)}$. It is worth mentioning that similar semi-trivial limits of ground states for mass-critical Schr\"{o}dinger systems were obtained in  \cite{BS,GL}.
\end{Rem}

Based on the existence results on ground states and excited states, we can provide sufficient conditions for the global dynamics of solutions.\\
\noindent Firstly, with a control on the energy by means of the  excited state obtained in Theorem \ref{th1.2}, we show a global existence result. Let us define the Pohozaev functional $P$ by
\begin{equation}\label{poho-intro}
P(\vec{u}):=\sum^{3}_{i=1}\|\nabla u_i\|^2_2-\gamma_{p} \sum^{3}_{i=1}\|u_i\|^p_p-\frac{N\alpha}{2} \mathrm{Re} \int_{\R^N} u_1u_2\overline{u}_3dx.
\end{equation}
We have the following.
\begin{theo}\label{th1.4} Under the assumptions of Theorem \ref{th1.2}, let $\vec{\psi}$ be the solution of \eqref{eqA0.1} with initial datum $\vec{\psi}_0 \in S(a_1,a_2)$ such that $P(\vec{\psi}_0)>0$ and $E(\vec{\psi}_0)<E(\vec{v})$.
Then, $\vec{\psi}$ exists globally in time.
\end{theo}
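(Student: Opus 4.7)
The plan follows the Soave-type template for normalized mass-super\-critical problems. By conservation of $Q_1,Q_2$ and $E$, the flow satisfies $\vec{\psi}(t)\in S(a_1,a_2)$ and $E(\vec{\psi}(t))=E(\vec{\psi}_0)<E(\vec{v})$ for every $t\in[0,T_{\max})$, so the whole task reduces to establishing a uniform bound on $\sum_i\|\nabla\psi_i(t)\|_{L^2}^2$ and then invoking the blow-up alternative recalled in the Introduction. The crux is to upgrade the initial assumption $P(\vec{\psi}_0)>0$ to the dynamical statement $P(\vec{\psi}(t))>0$ for all $t\in[0,T_{\max})$. Assume for contradiction that there is a first time $t^\ast\in(0,T_{\max})$ with $P(\vec{\psi}(t^\ast))=0$; then $\vec{\psi}(t^\ast)$ lies in the Pohozaev manifold $\mathcal{P}(a_1,a_2):=\{\vec{u}\in S(a_1,a_2):P(\vec{u})=0\}$. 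The proof of Theorem~\ref{th1.2} provides the variational characterization
\begin{equation*}
E(\vec{v})=\inf\{E(\vec{u}):\vec{u}\in\mathcal{P}^-(a_1,a_2)\},
\end{equation*}
where $\mathcal{P}^-$ is the ``mountain pass'' component of $\mathcal{P}$ singled out by the convex--concave geometry guaranteed by $\max\{a_1,a_2\}<D$. A standard continuity/fiber-map argument shows that a trajectory leaving $\{P>0\}$ with $E<E(\vec{v})$ can only first meet $\{P=0\}$ on $\mathcal{P}^-$, and this produces the contradiction $E(\vec{\psi}(t^\ast))\ge E(\vec{v})>E(\vec{\psi}_0)$.

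\textbf{A priori gradient bound.} Set $G(t):=\sum_{i=1}^3\|\nabla\psi_i(t)\|_{L^2}^2$, $L(t):=\sum_{i=1}^3\|\psi_i(t)\|_{L^p}^p$, and $R(t):=\mathrm{Re}\int_{\R^N}\psi_1\psi_2\overline{\psi}_3\,dx$. Solving for $L$ in the conservation law $E(\vec{\psi}(t))=E(\vec{\psi}_0)$ gives $L=\tfrac{p}{2}G-pE(\vec{\psi}_0)-p\alpha R$; plugging this into $P(\vec{\psi}(t))>0$, i.e.\ $G>\gamma_p L+\tfrac{N\alpha}{2}R$, and using $p\gamma_p>2$ (from $p>2_*$) together with $2p\gamma_p-N=N(p-3)$, yields
\begin{equation*}
G(t)<\frac{2p\gamma_p}{p\gamma_p-2}\,E(\vec{\psi}_0)+\frac{N(p-3)\alpha}{p\gamma_p-2}\,R(t).
\end{equation*}
By H\"older's inequality and the Gagliardo--Nirenberg inequality \eqref{a1} at $p=3$ (so $\gamma_3=N/6$), combined with the mass bound $\|\psi_i(t)\|_{L^2}\le\max\{a_1,a_2\}$,
\begin{equation*}
|R(t)|\le\prod_{i=1}^{3}\|\psi_i(t)\|_{L^3}\le C(N,a_1,a_2)\,G(t)^{N/4}.
\end{equation*}
Since $N/4\le3/4<1$ for $N\le3$, Young's inequality gives $|R(t)|\le\varepsilon G(t)+C_\varepsilon$ for any $\varepsilon>0$, and choosing $\varepsilon$ small enough produces $\sup_{t\in[0,T_{\max})}G(t)<\infty$. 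Combined with mass conservation this yields $\sup_t\|\vec{\psi}(t)\|_{H^1}<\infty$, and the blow-up alternative forces $T_{\max}=+\infty$.

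\textbf{Main obstacle.} The delicate step is the dynamical positivity of $P$. The scaling exponent $N/2$ of the three-wave term differs from both the kinetic exponent $2$ and the $L^p$ exponent $p\gamma_p$, and the sign of $R$ is indefinite. Consequently the fiber map
\begin{equation*}
t\mapsto E(\vec{u}^t)=\tfrac{t^2}{2}G-\tfrac{t^{p\gamma_p}}{p}L-\alpha t^{N/2}R,\qquad \vec{u}^t(x):=t^{N/2}\vec{u}(tx),
\end{equation*}
may a priori admit two positive critical points, so that $\mathcal{P}(a_1,a_2)$ splits as $\mathcal{P}^+\cup\mathcal{P}^-$. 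Pinning down which component of $\{P=0\}$ a flow trajectory can first reach---and thereby justifying the variational characterization of $\vec{v}$ invoked above---is exactly where the full strength of $\max\{a_1,a_2\}<D$ (the convex--concave geometry) is expected to enter, as already exploited in the proof of Theorem~\ref{th1.2}.
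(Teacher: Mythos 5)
Your second step is sound: the bound
$\sum_i\|\nabla\psi_i\|_2^2<\tfrac{2p\gamma_p}{p\gamma_p-2}E(\vec\psi_0)+\tfrac{N(p-3)\alpha}{p\gamma_p-2}\mathrm{Re}\int\psi_1\psi_2\overline\psi_3$ obtained from $P>0$ and energy conservation, combined with $|\mathrm{Re}\int\psi_1\psi_2\overline\psi_3|\lesssim\bigl(\sum_i\|\nabla\psi_i\|_2^2\bigr)^{N/4}$ and Young's inequality, is exactly the computation the paper runs through the identity \eqref{ident} and in Lemma \ref{lem6}. The gap is in your first step. You assert that a trajectory with $E<E(\vec v)$ starting in $\{P>0\}$ can only first meet $\{P=0\}=\mathcal{P}_{a_1,a_2}$ on the component $\mathcal{P}^-_{a_1,a_2}$, and you get the contradiction from $E(\vec v)=\inf_{\mathcal{P}^-_{a_1,a_2}}E$. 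But the energy constraint does not rule out the other component: $\mathcal{P}^+_{a_1,a_2}$ contains points of energy far below $E(\vec v)$ --- the ground states themselves lie in $\mathcal{P}^+_{a_1,a_2}$ with $E=m^+(a_1,a_2)<0<E(\vec v)$ --- so nothing energetic prevents the trajectory from reaching $\mathcal{P}^+_{a_1,a_2}$ and crossing into $\{P<0\}$. Indeed, the forward invariance of $\{P>0,\,E<E(\vec v)\}$ is not merely unproved but doubtful: for data near a ground state with $P(\vec\psi_0)>0$ (so $E(\vec\psi_0)<0<E(\vec v)$), orbital stability keeps the trajectory near $\mathcal G\subset\mathcal{P}^+_{a_1,a_2}$, in every neighborhood of which $P$ takes both signs. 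Note also that the sign of $\Psi''_{\vec u}(1)$, which distinguishes $\mathcal{P}^+$ from $\mathcal{P}^-$, governs the fiber map $s\mapsto P(s\star\vec u)$ along the scaling direction, not the derivative of $t\mapsto P(\vec\psi(t))$ along the Schr\"odinger flow, so no continuity argument forces the first exit to occur on $\mathcal{P}^-$.

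The paper's proof circumvents this by never claiming $P$ stays positive. It assumes, for contradiction with the blow-up alternative, that $\sum_i\|\nabla\psi_i(t)\|_2^2\to\infty$; your Gagliardo--Nirenberg estimate applied to \eqref{ident} then gives $P(\vec\psi(t))\to-\infty$, which (via the uniform lower bound $\inf_{s\in(0,s_{\vec u})}s\Psi'_{\vec u}(s)\ge -K$) forces the global maximum point to satisfy $\sigma_{\vec\psi(t)}<1$ near $T_{\max}$. Since $P(\vec\psi_0)>0$ gives $\sigma_{\vec\psi_0}>1$, continuity of $\vec u\mapsto\sigma_{\vec u}$ produces a time $\tau$ with $\vec\psi(\tau)\in\mathcal{P}^-_{a_1,a_2}$, and only then does $E(\vec\psi_0)<E(\vec v)=\inf_{\mathcal{P}^-_{a_1,a_2}}E$ yield the contradiction. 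In other words, the correct dynamical barrier is $\mathcal{P}^-_{a_1,a_2}$ (equivalently the level set $\{\sigma_{\vec u}=1\}$), not the whole zero set of $P$; to repair your argument you would need to replace the invariance of $\{P>0\}$ by this weaker but sufficient statement.
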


Secondly, we are able to prove that under certain  conditions on the initial datum, finite time blowing-up solutions exist.

\begin{theo}\label{thm:blowup} Under the assumption of Theorem \ref{th1.2}, let $\vec{\psi}$ be the solution of \eqref{eqA0.1} with initial datum $\vec{\psi}_0 \in S(a_1,a_2)$, $P(\vec{\psi}_0)<0$ and $E(\vec{\psi}_0)<E(\vec v)$.
If $|x|\vec{\psi}_0\in L^2(\R^N,\mathbb{C}^3)$, the solution blows-up in finite time. The same conclusion holds true for $N=2,3$ for infinite variance solutions which are radial  provided $p\in(4,6)$ for $N=2$.
\end{theo}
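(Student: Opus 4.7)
The plan is a Glassey-type virial argument combined with the variational characterization of the excited state $\vec{v}$ given by Theorem \ref{th1.2}. First, under the assumption $|x|\vec{\psi}_0\in L^2(\R^N,\mathbb C^3)$, one verifies the virial identity
\[
V(t):=\sum_{i=1}^{3}\int_{\R^N}|x|^2|\psi_i(t,x)|^2\,dx, \qquad V''(t)=8\,P(\vec{\psi}(t)),
\]
which follows from a direct computation using that $\partial_\lambda|_{\lambda=1}E(\vec{u}_\lambda)=P(\vec{u})$ for the $L^2$-invariant scaling $\vec{u}_\lambda(x):=\lambda^{N/2}\vec{u}(\lambda x)$, together with the conservation laws.

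Second, I would show that the set
\[
\mathcal{V}_-:=\{\vec{u}\in S(a_1,a_2):\ E(\vec{u})<E(\vec{v}),\ P(\vec{u})<0\}
\]
is invariant under the flow of \eqref{eqA0.1}. The invariance of $S(a_1,a_2)$ and of the strict energy inequality is immediate from the conservation of $Q_1,Q_2,E$. To preserve the sign of $P$, I would use the mountain-pass characterization of $\vec{v}$ built into the construction of Theorem \ref{th1.2}, namely $E(\vec{v})=\inf_{\vec{u}\in\mathcal P(a_1,a_2)}E(\vec{u})$ on the Pohozaev manifold $\mathcal P(a_1,a_2):=\{\vec{u}\in S(a_1,a_2):P(\vec{u})=0\}$. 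Under $\max\{a_1,a_2\}<D$ and $p>2_*$, the fibering $\lambda\mapsto E(\vec{u}_\lambda)$ has a unique strict maximum at the value $\lambda^\star(\vec{u})$ for which $P(\vec{u}_{\lambda^\star})=0$; combining this with $E(\vec{\psi}(t))<E(\vec{v})$ excludes $P(\vec{\psi}(t))=0$, so by continuity in $t$ the sign $P(\vec{\psi}(t))<0$ is preserved from $t=0$.

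Third, to close the virial argument I need a uniform strict bound. Integrating $\partial_\lambda E(\vec{u}_\lambda)=\lambda^{-1}P(\vec{u}_\lambda)$ from $\lambda^\star(\vec u)$ down to $1$ and exploiting the concavity of the fiber past its maximum, I would derive a quantitative inequality of the form
\[
E(\vec{v})-E(\vec{u})\leq C(N,p)\,|P(\vec{u})| \qquad \forall\, \vec{u}\in\mathcal{V}_-.
\]
Setting $\delta:=(E(\vec{v})-E(\vec{\psi}_0))/C(N,p)>0$, invariance of $\mathcal{V}_-$ yields $P(\vec{\psi}(t))\leq -\delta$ on the whole lifespan. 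The virial identity then forces $V(t)\leq V(0)+V'(0)t-4\delta t^2$, which becomes negative in finite time, contradicting $V\geq 0$ unless $T_{\max}<\infty$.

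Fourth, in the radial case $N=2,3$ without the weight hypothesis, I would replace $|x|^2$ by a smooth radial cutoff $\phi_R$ satisfying $\phi_R(r)=r^2$ on $\{r\leq R\}$, $\phi_R''\leq 2$, and $\|\phi_R^{(k)}\|_\infty\lesssim R^{2-k}$. The localized quantity $V_R(t):=\sum_i\int\phi_R(|x|)|\psi_i|^2\,dx$ then satisfies $V_R''(t)\leq 8P(\vec{\psi}(t))+\varepsilon_R(\vec{\psi}(t))$, where the error is controlled using the radial Strauss embedding $\|u\|_{L^\infty(|x|\geq R)}\lesssim R^{-(N-1)/2}\|u\|_{H^1}$ and a priori bounds on $\|\nabla\vec{\psi}\|_{L^2}$ deduced from $E(\vec{\psi}_0)<E(\vec v)$ and $P(\vec{\psi}(t))<0$. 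Taking $R$ large absorbs the error into $4\delta$, and the Glassey dichotomy closes as before; the restriction $p\in(4,6)$ for $N=2$ is exactly what makes the $\|\psi_i\|_{L^p}^p$ contribution of the error vanish as $R\to\infty$, since the radial decay in two dimensions is the borderline $r^{-1/2}$. The main obstacle throughout is the sharp quantitative bound $E(\vec v)-E(\vec u)\lesssim|P(\vec u)|$ on $\mathcal V_-$: without it, one recovers only pointwise negativity of $P$ along the flow, which is not enough to make the (localized) virial strictly concave in time.
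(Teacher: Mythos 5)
Your steps (1)--(3) essentially reproduce the paper's route: the virial identity $I''(t)=8P(\vec\psi(t))$ for the weight $|x|^2$, the fiber-map inequality $P(\vec u)\le E(\vec u)-\inf_{\mathcal P^-_{a_1,a_2}}E$ obtained from concavity of $\Psi_{\vec u}$ beyond its global maximum (this is precisely the paper's \eqref{x5}), and the resulting uniform bound $P(\vec\psi(t))\le-\eta$. One caveat: for $\vec u\in\mathcal M$ the fiber map has \emph{two} critical points (a local minimum $s_{\vec u}$ and a global maximum $\sigma_{\vec u}$, Lemma \ref{lem2.3}), so your assertion that it has a unique critical value where $P$ vanishes is not correct, and the deduction ``$E(\vec\psi(t))<E(\vec v)$ excludes $P(\vec\psi(t))=0$'' fails as stated: the energy constraint only forbids touching $\mathcal P^-_{a_1,a_2}$, where $\inf E=E(\vec v)>0$, not $\mathcal P^+_{a_1,a_2}$, where $\inf E=m^+(a_1,a_2)<0$. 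One must additionally track that the trajectory stays on the far side of the mountain pass, i.e.\ $\sigma_{\vec\psi(t)}<1$, which is exactly the conditional hypothesis under which \eqref{x5} is valid.

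The genuine gap is in step (4). You invoke ``a priori bounds on $\|\nabla\vec\psi\|_{L^2}$ deduced from $E(\vec\psi_0)<E(\vec v)$ and $P(\vec\psi(t))<0$'' and then absorb the localized-virial errors by taking $R$ large. No such bound exists: from \eqref{ident} one gets $\frac12\left(1-\frac{2}{p\gamma_p}\right)\sum_i\|\nabla\psi_i\|_2^2=E(\vec\psi_0)-\frac{1}{p\gamma_p}P(\vec\psi)+\alpha\frac{p-3}{p-2}\mathrm{Re}\int\psi_1\psi_2\overline{\psi}_3$, and the term $-\frac{1}{p\gamma_p}P(\vec\psi)$ is positive and unbounded as $P\to-\infty$; indeed, if the gradient were bounded a priori, the blow-up alternative would force $T_{\max}=+\infty$ and the theorem would be false, so the claim is self-defeating. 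Consequently, for fixed $R$ the Strauss-type error terms $R^{-\frac{(N-1)(p-2)}{2}}\sum_i\|\nabla\psi_i\|_2^{(p-2)/2}$ and $R^{-\frac{N-1}{2}}\sum_i\|\nabla\psi_i\|_2^{1/2}$ need not be small, and ``taking $R$ large'' cannot close the convexity argument. The paper's fix is Lemma \ref{lem6}: the refined coercivity $P(\vec\psi(t))\le-\delta\sum_i\|\nabla\psi_i(t)\|_2^2$, proved from \eqref{ident} together with Gagliardo--Nirenberg and Young applied to the cubic coupling. With that, the errors --- which grow like $\|\nabla\vec\psi\|_2^{(p-2)/2}$ with $(p-2)/2<2$ exactly when $p<6$, whence the restriction $p\in(4,6)$ for $N=2$ --- are absorbed by Young's inequality into the quadratic negative term, uniformly in time. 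You correctly flagged at the end that pointwise negativity of $P$ is insufficient, but the missing ingredient is this gradient coercivity of $-P$, not a quantitative lower bound on $|P|$ by a constant nor largeness of $R$.
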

The previous Theorem implies the following instability result.
\begin{corollary}\label{cor:insta}
The  standing wave    $\vec{\psi}(t,x)=\left(e^{i\lambda_1 t}v_1,e^{i\lambda_1 t}v_2,e^{i(\lambda_1+\lambda_2)t}v_3\right)$  constructed with $\vec v$ as in Theorem \ref{th1.2} is strongly unstable.
\end{corollary}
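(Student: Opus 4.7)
The plan is to adapt the Berestycki--Cazenave instability scheme to the three-wave system. For every $\varepsilon>0$ I will produce an initial datum $\vec{\psi}_0\in S(a_1,a_2)$ with $\|\vec{\psi}_0-\vec v\|_{H^1(\R^N,\mathbb{C}^3)}<\varepsilon$ that meets the hypotheses of Theorem \ref{thm:blowup}; the corresponding solution of \eqref{eqA0.1} then blows up in finite time, and, since the standing wave evaluated at $t=0$ equals $\vec v$, this is precisely the definition of strong instability.

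As the perturbation I take the $L^{2}$-preserving dilation $\vec{v}^{\mu}(x):=\mu^{N/2}\vec{v}(\mu x)$, $\mu>0$. This scaling preserves each component's $L^{2}$-norm, so $\vec{v}^{\mu}\in S(a_1,a_2)$ for every $\mu>0$, and $\vec{v}^{\mu}\to\vec v$ in $H^{1}$ as $\mu\to 1$. Since $\vec v$ solves \eqref{eqA0.2}, it satisfies the Pohozaev identity $P(\vec v)=0$ with $P$ as in \eqref{poho-intro}, and a direct scaling computation yields $\mu\,\frac{d}{d\mu}E(\vec{v}^{\mu})=P(\vec{v}^{\mu})$, so $\mu=1$ is a critical point of $g(\mu):=E(\vec{v}^{\mu})$. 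The positivity of the Lagrange multipliers $\lambda_1,\lambda_2,\lambda_3=\lambda_1+\lambda_2$ combined with standard elliptic theory yields exponential decay of $\vec v$, hence $|x|\vec v\in L^{2}(\R^N,\mathbb{C}^3)$, a property clearly inherited by $\vec{v}^{\mu}$.

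The heart of the argument is to promote $\mu=1$ to a strict local maximum of $g$, which then forces both $E(\vec{v}^{\mu})<E(\vec v)$ and $P(\vec{v}^{\mu})<0$ for $\mu$ slightly larger than $1$. I expect to read this off from the mountain-pass/Pohozaev-manifold construction of the excited state that will underlie Theorem \ref{th1.2}: the hypothesis $\max\{a_1,a_2\}<D$ guarantees the convex--concave fiber geometry alluded to in the remarks after Theorem \ref{th1.2}, under which $\{P=0\}\cap S(a_1,a_2)$ is a natural constraint and the excited state sits at the top of the dilation fiber through itself. As a self-contained alternative, using $P(\vec v)=0$ I would compute
\begin{equation*}
g''(1)=\gamma_p(2-p\gamma_p)\sum_{i=1}^{3}\|v_i\|_{L^p(\R^N)}^p+\frac{N(4-N)}{4}\alpha\,\mathrm{Re}\int_{\R^N}v_1v_2\overline{v}_3\,dx,
\end{equation*}
observe that the first term is strictly negative because $p>2_*$ yields $p\gamma_p>2$, and then dominate the sign-indefinite second term via the Gagliardo--Nirenberg inequality \eqref{a1} together with the mass smallness $\max\{a_1,a_2\}<D$. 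Once the maximality is in hand, for any $\varepsilon>0$ I choose $\mu>1$ close enough to $1$ so that every condition of Theorem \ref{thm:blowup} is satisfied by $\vec{\psi}_0:=\vec{v}^{\mu}$; the radial cases $N=2,3$ (with $p\in(4,6)$ for $N=2$) follow identically since the dilation preserves radial symmetry. The only genuinely delicate step is the dilation-fiber maximality at $\mu=1$; everything else reduces to routine virial and continuity arguments.
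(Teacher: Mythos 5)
Your proposal is correct and follows essentially the same route as the paper: the paper also perturbs by the $L^2$-preserving dilation $s\star\vec v$ with $s>1$, obtains $P(s\star\vec v)<0$ and $E(s\star\vec v)<E(\vec v)=\inf_{\mathcal{P}^-_{a_1,a_2}}E$ from the fact that $\vec v\in\mathcal{P}^-_{a_1,a_2}$ makes $s=1$ the global maximum of the fiber map $\Psi_{\vec v}$ (Lemma \ref{lem2.3}), uses exponential decay from \cite{BL1} for the virial weight, and invokes Theorem \ref{thm:blowup}. Your "self-contained" $g''(1)$ computation is unnecessary, since the strict maximality at $s=1$ is already built into the construction of the excited state as a minimizer over $\mathcal{P}^-_{a_1,a_2}$ together with $\mathcal{P}^0_{a_1,a_2}=\emptyset$.
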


\begin{Rem}\rm
The set
\begin{equation*}
\Lambda_0:=\left\{\vec{u}\in S(a_1,a_2)  \hbox{ s.t. } P(\vec{u})>0 \ \ \text{and} \ \ E(\vec{u})< E(\vec{v})\right\}
\end{equation*}
is not empty and contains not only small initial data in the sense of the ${L^2(\R^N)}$-norm. Given $\gamma,\mu,\nu>0$, in the same manner we can look for solutions $(u_1,u_2,u_3)\in H^1(\R^N,\mathbb{C}^3)$ of \eqref{eqA0.2} satisfying the conditions $\|u_1\|^2_2=\gamma$, $\|u_2\|^2_2=\mu$, and $\|u_3\|^2_2=\nu$. Such solutions are of interest in physics and sometimes referred to as normalized solutions. In the present paper, we care more about solutions of \eqref{eqA0.2} with prescribed partial sum of  masses. This is not only because $Q_1(\vec{u})$ and $Q_2(\vec{u})$ are invariant with respect to the flow generated by \eqref{eqA0.1} but also because it is suitable for studying dynamics of \eqref{eqA0.1}.
\end{Rem}
\begin{Rem}\rm
The last remark is on the fact that similar results as the ones described above can be stated for  $\alpha<0$, provided one replaces $u_3$ by $-u_3$ in \eqref{eqA0.2}.
\end{Rem}

\subsection{Physical background and motivations}\label{sub:phys} The study of the model as described by equations in \eqref{eqA0.1} has a physical motivation, as the system \eqref{eqA0.1} is a simplified model of a quasilinear
Zakharov system related to the Raman amplification in a plasma. See  \cite{CC} for details. Roughly speaking, the Raman amplification is an instability phenomenon taking place when an incident laser field propagates into a plasma (see \cite{HAG} and the introduction in \cite{PA}). As explained in \cite{PA},  the laser  field, entering a plasma,  is backscattered by a Raman type process and the interaction of the two waves generates an electronic plasma wave. Then the three waves together produce a change in the ions' density which in turn affects the waves. This picture is described by three Schr\"odinger equations coupled  with a wave equation (i.e., a Zakharov type system) as follows:
\begin{equation}\label{eq:phys1}
\begin{cases}
\left(i\left(\partial_{t}+v_{C} \partial_{y}\right)+\alpha_{1} \partial_{y}^{2}+\alpha_{2} \Delta_{\perp}\right) A_{C}=\frac{b^{2}}{2} n A_{C}-\gamma(\nabla \cdot E) A_{R} e^{-i \theta}, \\
\left(i\left(\partial_{t}+v_{R} \partial_{y}\right)+\beta_{1} \partial_{y}^{2}+\beta_{2} \Delta_{\perp}\right) A_{R}=\frac{b c}{2} n A_{R}-\gamma\left(\nabla \cdot \overline{E}\right) A_{C} e^{i \theta}, \\
\left(i \partial_{t}+\delta_{1} \Delta\right) E=\frac{b}{2} n E+\gamma \nabla\left(\overline{A}_{R} A_{C} e^{i \theta}\right), \\
\left(\partial_{t}^{2}-v_{s}^{2} \Delta\right) n=a \Delta\left(|E|^{2}+b\left|A_{C}\right|^{2}+c\left|A_{R}\right|^{2}\right),
\end{cases}
\end{equation}
where $\theta=k_{1} y-k_1^2\delta_1 t$, $t \in \mathbb{R}$, $y \in \mathbb{R}$, and $\Delta_{\perp}=\partial_{x}^{2}+\partial_{z}^{2}$.
In this system, $A_C$ denotes the envelope of the incident laser field, $A_R$ is the backscattered Raman field, $E$ is the electronic-plasma wave and $n$ is the variation of ions' density.  We refer to \cite{CC, CC2} for a precise description of the physical coefficients appearing in the equations above. \medskip

After proving the local well-posedness of \eqref{eq:phys1}, in order to study the solitary waves towards an analysis of the global dynamics, the authors of \cite{CCO} needed to introduce some modifications  on \eqref{eq:phys1}, eventually leading to the system \eqref{eqA0.1} studied in this paper. For the reader's convenience and sake of clarity, we report here the few steps as in \cite{CCO} to derive the desired three NLS system.  \medskip

In \eqref{eq:phys1}, by writing $E=Fe^{i\theta}$, by considering a trivial density of ions, i.e., $n=0$, and by neglecting the $\nabla$ terms, the longitudinal dispersion terms $\partial^2_{y}$, and the transverse ones   $\Delta_{\perp}$,
one reduces to the simplified system
\begin{equation*}\label{eq1.1}
\begin{cases}
\left(i\partial_t+\alpha_{2} \Delta_{\perp}\right) A_{C}=-\gamma i k_{1} F A_{R}, \\
\left(i\partial_t+\beta_{2} \Delta_{\perp}\right) A_{R}=\gamma i k_{1} \overline{F} A_{C}, \\
\left(i\partial_t+\delta_{1} \Delta\right) F=i k_{1} \gamma \overline{A}_{R} A_{C}.
\end{cases}
\end{equation*}
In order to model nonlinear effects,  the other nonlinear terms as appearing in \eqref{eqA0.1} were added in \cite{CCO}, hence by a simple change of variables, and the introduction of the power-type nonlinear terms, one gets \eqref{eqA0.1}.

\subsection{Notations} In the paper, we use the following notations. $x\in\R^N$, $N\leq 3$, $t\in\R$, $L^p=L^p(\R^N)$ with norm $\|f\|_{L^p(\R^N)}=\|f\|_p$, $H^1(\R^N)$ is the usual Sobolev space, with $H^1(\R^N, \mathbb C^3)$ or $H^1(\R^N, \mathbb R^3)$ for vector valued functions, or $H^1(\R^N, \mathbb R)$ and $H^1(\R^N, \mathbb C)$ for scalar functions. $H^{-1}(\R^N)$ denote the dual space of $H^1(\R^N)$.  $\int_{\R^N} f dx $} is denoted simply by $\int f$. $\mathrm{Re}$ and $\mathrm{Im}$ stand for the real and imaginary part of a complex number, respectively, and $\overline z$ stands for the complex conjugate of $z$.

\section{Preliminaries}
In this section, we give some preliminaries useful for the rest of the paper.

\begin{Lem} \label{lem2.1}
Let $N\leq 3$,  $2_*\le p<  2^*$, and $(u_1,u_2,u_3)\in H^1(\R^N,\mathbb{C}^3)$ be a solution to \eqref{eqA0.2}. Then the following Pohozaev-Nehari identity holds true:
\begin{equation*} \label{equ2.1}
\sum^{3}_{i=1}\int |\nabla u_i|^2=\gamma_{p} \sum^{3}_{i=1}\int |u_i|^p+\frac{N\alpha}{2} \mathrm{Re} \int u_1u_2\overline{u}_3.
\end{equation*}
\end{Lem}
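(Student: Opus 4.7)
The plan is to combine two standard testings of the system \eqref{eqA0.2}: one with $\overline{u_i}$ (which produces a Nehari-type identity) and one with the dilation multiplier $x\cdot\nabla\overline{u_i}$ (which produces a Pohozaev-type identity). The desired identity is obtained by eliminating the combination $\sum_i\lambda_i\|u_i\|_2^2$ between the two, so the Lagrange multipliers cancel and \eqref{equ2.1} remains.

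First I would multiply the $i$-th equation of \eqref{eqA0.2} by $\overline{u_i}$, integrate, take real parts, and sum over $i$. Using $\mathrm{Re}\int u_3\overline{u_1\,u_2}=\mathrm{Re}\int u_1 u_2\overline{u_3}$, the three coupling contributions add up to the same real integral and yield
\begin{equation}\label{eq:plan-I}
\sum_{i=1}^3\|\nabla u_i\|_2^2+\sum_{i=1}^3\lambda_i\|u_i\|_2^2=\sum_{i=1}^3\|u_i\|_p^p+3\alpha\,\mathrm{Re}\int u_1u_2\overline{u_3}.
\end{equation}
Second, I would multiply the $i$-th equation by $x\cdot\nabla\overline{u_i}$, take real parts, and sum. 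The standard identities (justified, in case of only $H^1$ a priori regularity, by the usual truncation $\phi(x/R)$ together with $C^2$ bootstrap from elliptic regularity and decay at infinity) give
\begin{gather*}
\mathrm{Re}\!\int\!-\Delta u_i\,(x\!\cdot\!\nabla\overline{u_i})=-\tfrac{N-2}{2}\|\nabla u_i\|_2^2,\quad \mathrm{Re}\!\int\!\lambda_i u_i(x\!\cdot\!\nabla\overline{u_i})=-\tfrac{N\lambda_i}{2}\|u_i\|_2^2,\\
\mathrm{Re}\!\int\!|u_i|^{p-2}u_i\,(x\!\cdot\!\nabla\overline{u_i})=-\tfrac{N}{p}\|u_i\|_p^p.
\end{gather*}

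The one delicate point, and the main technical step, is the coupling piece $\mathrm{Re}\int\bigl(u_3\overline{u_2}\,x\!\cdot\!\nabla\overline{u_1}+u_3\overline{u_1}\,x\!\cdot\!\nabla\overline{u_2}+u_1u_2\,x\!\cdot\!\nabla\overline{u_3}\bigr)$. Here I would use the Leibniz rule $\nabla(u_3\overline{u_1u_2})=(\nabla u_3)\overline{u_1u_2}+u_3\overline{u_2}\nabla\overline{u_1}+u_3\overline{u_1}\nabla\overline{u_2}$ to rewrite the first two terms as $x\cdot\nabla(u_3\overline{u_1u_2})-x\cdot(\nabla u_3)\overline{u_1u_2}$; combining with the third term, the remainder $u_1u_2\nabla\overline{u_3}-\overline{u_1u_2}\nabla u_3$ is purely imaginary and contributes nothing to the real part, while $\mathrm{Re}\int x\cdot\nabla(u_3\overline{u_1u_2})=-N\,\mathrm{Re}\int u_1u_2\overline{u_3}$ by divergence theorem. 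Hence the Pohozaev-type identity reads
\begin{equation}\label{eq:plan-II}
-\tfrac{N-2}{2}\sum_{i=1}^3\|\nabla u_i\|_2^2-\tfrac{N}{2}\sum_{i=1}^3\lambda_i\|u_i\|_2^2+\tfrac{N}{p}\sum_{i=1}^3\|u_i\|_p^p+N\alpha\,\mathrm{Re}\!\int\! u_1u_2\overline{u_3}=0.
\end{equation}

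Finally, I would use \eqref{eq:plan-I} to solve for $\sum_i\lambda_i\|u_i\|_2^2$ and substitute into \eqref{eq:plan-II}. The coefficient of $\sum\|\nabla u_i\|_2^2$ becomes $-\tfrac{N-2}{2}+\tfrac{N}{2}=1$, the coefficient of $\sum\|u_i\|_p^p$ becomes $\tfrac{N}{p}-\tfrac{N}{2}=-\tfrac{N(p-2)}{2p}=-\gamma_p$, and the coefficient of $\alpha\,\mathrm{Re}\int u_1u_2\overline{u_3}$ becomes $N-\tfrac{3N}{2}=-\tfrac{N}{2}$. Rearranging gives exactly \eqref{equ2.1}. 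The only nonroutine step is the algebraic manipulation of the trilinear coupling; everything else is the textbook Pohozaev procedure adapted componentwise.
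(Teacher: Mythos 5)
Your proposal is correct and follows essentially the same route as the paper: a Nehari-type identity from testing with $\overline{u}_i$, a Pohozaev-type identity from testing with $x\cdot\nabla\overline{u}_i$, and elimination of $\sum_i\lambda_i\|u_i\|_2^2$ between the two. The only (cosmetic) difference is in the trilinear coupling term, which you handle by applying the product rule to all three contributions at once and discarding a purely imaginary remainder, whereas the paper integrates by parts equation by equation and lets the cross terms cancel upon summation; the resulting identity $-N\alpha\,\mathrm{Re}\int u_1u_2\overline{u}_3$ is the same.
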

\begin{proof}
The proof is standard and we refer for example to the classical reference \cite{BL}.
\end{proof}

We now introduce the $L^2$-norm-preserving dilation operator
\begin{equation*}
s\star \vec{u}(x):=\left(s^{\frac{N}{2}}u_1(sx),s^{\frac{N}{2}}u_2(sx),s^{\frac{N}{2}}u_3(sx)\right)
\end{equation*}
with $s>0$. As $\lim\limits_{s\to \infty}E(s\star \vec{u})=-\infty$, we see that $\inf\limits_{\vec{u}\in S(a_1,a_2)}E(\vec{u})=-\infty$ for $2_*<p<2^*$. Furthermore, we introduce (see  \cite{BJS}) the Pohozaev-Nehari set
\begin{equation*}\label{def:poho}
\mathcal{P}_{a_1,a_2}:=\left\{\vec{u}\in S(a_1,a_2) \hbox{ s.t. }P(\vec{u})=\sum^{3}_{i=1}\|\nabla u_i\|^2_2-\gamma_{p} \sum^{3}_{i=1}\|u_i\|^p_p-\frac{N\alpha}{2} \mathrm{Re} \int u_1u_2\overline{u}_3=0\right\},
\end{equation*}
where $\gamma_p$ is given in \eqref{def:gammap}.\medskip

The  set $\mathcal{P}_{a_1,a_2}$ is related to the fiber maps
\begin{equation}\label{eq:fiber}
\Psi_{\vec{u}}(s)=E(s\star\vec{u})=\frac{s^2}{2}\sum^{3}_{i=1} \|\nabla u_i\|^2_2-\frac{s^{p\gamma_p}}{p}\sum^{3}_{i=1}\| u_i\|^p_p-s^{\frac{N}{2}}\alpha\mathrm{Re}\int u_1u_2\overline{u}_3.
\end{equation}
Indeed, we have $s\Psi'_{\vec{u}}(s)=P(s\star\vec{u})$.
Note that $\mathcal{P}_{a_1,a_2}$ can be divided into the disjoint union $\mathcal{ P}_{a_1,a_2}=\mathcal{ P}_{a_1,a_2}^+\cup \mathcal{ P}_{a_1,a_2}^0\cup \mathcal{ P}_{a_1,a_2}^-$, where
\begin{equation}\label{eq:c41}
\begin{aligned}
	\mathcal{ P}_{a_1,a_2}^+&:=\left\{\vec{u}\in \mathcal{ P}_{a_1,a_2}  \hbox{ s.t. } \Psi_{\vec{u}}''(1)>0\right\},\\
	\mathcal{ P}_{a_1,a_2}^0&:=\left\{\vec{u}\in \mathcal{ P}_{a_1,a_2}  \hbox{ s.t. } \Psi_{\vec{u}}''(1)=0\right\},\\
	\mathcal{ P}_{a_1,a_2}^-&:=\left\{\vec{u}\in \mathcal{ P}_{a_1,a_2}  \hbox{ s.t. } \Psi_{\vec{u}}''(1)<0\right\}.
\end{aligned}
\end{equation}
We first study the case $2_*<p<2^*$, namely the mass-energy intercritical case.
To show that the energy functional $E|_{S(a_1,a_2)}$ has a concave-convex geometry (i.e., a structure with a local minimum and a global maximum, where the local minimum is strictly less than zero and the global maximum is strictly greater than zero - see Lemma \ref{lem2.11} below), we introduce the following constraint:
\begin{equation}\label{eq1.2}
\mathcal{M}:=\left\{(u_1,u_2,u_3)\in H^1(\R^N,\mathbb{C}^3) \quad \hbox{ s.t. } \quad \mathrm{Re}\int u_1u_2\overline{u}_3 >0\right\}.
\end{equation}
In the spirit of Soave \cite{Soave1} and Wei and Wu \cite{WW}, for $\vec{u}\in \mathcal{M}$, we see that the presence of the mass-subcritical terms $\displaystyle\mathrm{Re}\int u_1u_2\overline{u}_3$ induces a convex-concave geometry of $E|_{S(a_1,a_2)}$ if $\alpha>0$ and $a_1,a_2>0$ are small.
For $\vec{u}\in S(a_1,a_2)$, we have $\|u_1\|_2\le a_1$, $\|u_2\|_2\le a_2$ and $\|u_3\|_2\le \min\{a_1,a_2\}$.
By Gagliardo-Nirenberg inequality and Young inequality, we have
\begin{equation}\label{z5}
\begin{aligned}
\frac{1}{p}\sum^{3}_{i=1}\|u_i\|^p_p&\le \frac{C^p(N,p)}{p}\left(\sum^{2}_{i=1}a^{p(1-\gamma_p)}_i\|\nabla u_i\|^{p\gamma_p}_2+\max\left\{a^{p(1-\gamma_p)}_1,a^{p(1-\gamma_p)}_2\right\}\|\nabla u_3\|^{p\gamma_p}_2\right)\\
&\le A_1\left(\sum_{i=1}^3\|\nabla u_i\|^2_2\right)^{\frac{p\gamma_p}{2}},
\end{aligned}
\end{equation}
where $A_1:=\frac{C^p(N,p)}{p}\left(\max\{a_1,a_2\}\right)^{p(1-\gamma_p)}$.
Similarly, we have
\begin{equation}\label{z4}
\left|\alpha\mathrm{Re}\int u_1u_2\overline{u}_3\right|\le 
A_2\left(\sum_{i=1}^3\|\nabla u_i\|^2_2\right)^{\frac{N}{4}},
\end{equation}
where $A_2:=\frac{\alpha  C^3(N,p)}{3^{\frac{N}{4}}}\left(\max\{a_1,a_2\}\right)^{\frac{6-N}{2}}$.
Then, combining \eqref{z5} and \eqref{z4} with the definition of the energy,  we get
\begin{equation}\label{b3}
\begin{aligned}
E(\vec{u})&\ge \frac{1}{2}\sum^{3}_{i=1}\|\nabla u_{i}\|^2_2-A_1\left(\sum^{3}_{i=1}\|\nabla u_{i}\|^2_2\right)^{\frac{p\gamma_p}{2}}-A_2\left(\sum^{3}_{i=1}\|\nabla u_{i}\|^2_2\right)^{\frac{N}{4}}\\
&=h\left(\left(\sum^{3}_{i=1}\|\nabla u_{i}\|^2_2\right)^{\frac{1}{2}}\right),
\end{aligned}
\end{equation}
where
\begin{equation}\label{def:func-h}
h(\rho)=\frac{\rho^2}{2}-A_1\rho^{p\gamma_p}-A_2\rho^{\frac{N}{2}}.
\end{equation}
\vskip2mm

The next Lemma  below  shows that the functional $E$  has a concave-convex structure on $S(a_1,a_2)$.

\begin{Lem}\label{lem2.11}
Let $N\le 3$,  $2_*<p < 2^*$, and $\alpha,a_1, a_2>0$. Let $D$ be as in \eqref{def:D} and $h(\rho)$ as in \eqref{def:func-h}.\smallskip

\noindent \textup{(i)} If $\max\{a_1,a_2\}< D$, then $h(\rho)$ has a local minimum at negative level and a global maximum at positive level. Moreover, there exist $R_0=R_0(a_1,a_2)$, $R_1=R_1(a_1,a_2)$, and $\rho^*$ such that, $R_0<\max\{a_1,a_2\}D^{-1}\rho^*<\rho^*<R_1$, and
\begin{equation*}
h(R_0)=h(R_1)=0, \quad h(\rho)>0 \iff \rho \in (R_0,R_1).
\end{equation*}\smallskip
\noindent \textup{(ii)} If $\max\{a_1,a_2\}=D$,  then $h(\rho)$ has a local minimum at negative level and a global maximum at level zero.
Moreover, we have
\[
h(\rho^*)=0 \quad \hbox{ and } \quad h(\rho)<0 \iff \rho\in(0, \rho^*) \cup (\rho^*,+\infty).
\]
\end{Lem}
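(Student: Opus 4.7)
The function $h$ is the difference of three monomials with exponents $N/2<2<p\gamma_p$, where the outer inequalities hold because $N\le 3$ and $p>2_*$. I would exploit this ordering to read off the geometry of $h$ directly: $h(0)=0$, $h(\rho)\to-\infty$ as $\rho\to\infty$ (driven by $-A_1\rho^{p\gamma_p}$), and $h(\rho)<0$ for small $\rho>0$ (because $-A_2\rho^{N/2}$ dominates $\rho^2/2$ near the origin). Consequently $h$ always admits a local minimum at a strictly negative level, and the entire question reduces to whether the subsequent global maximum of $h$ is positive, zero, or negative.

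The critical points of $h$ on $(0,\infty)$ are captured by the factorisation $h'(\rho)=\rho^{N/2-1}g(\rho)$, where $g(\rho):=\rho^{2-N/2}-p\gamma_p A_1\rho^{p\gamma_p-N/2}-\tfrac{N}{2}A_2$. A direct computation of $g'$ shows that $g$ is strictly increasing on $(0,\bar\rho)$ and strictly decreasing on $(\bar\rho,\infty)$ for the unique point $\bar\rho$ satisfying $\bar\rho^{p\gamma_p-2}=\tfrac{4-N}{p\gamma_p(2p\gamma_p-N)A_1}$. Since $g(0)=-NA_2/2<0$ and $g(+\infty)=-\infty$, the sign of $g(\bar\rho)$ dictates whether $h$ has $0$, $1$, or $2$ positive critical points; the case of two critical points produces the expected (local min, local max) pair.

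The critical threshold is obtained by imposing $h(\rho^*)=h'(\rho^*)=0$ simultaneously at the prospective local maximum $\rho^*$. Viewed as a linear system in the two quantities $A_1(\rho^*)^{p\gamma_p}$ and $A_2(\rho^*)^{N/2}$, this yields
\[
A_1(\rho^*)^{p\gamma_p}=\tfrac{4-N}{2(2p\gamma_p-N)}(\rho^*)^2,\qquad A_2(\rho^*)^{N/2}=\tfrac{2(p\gamma_p-2)}{2p\gamma_p-N}(\rho^*)^2.
\]
Substituting $A_1=p^{-1}C^p(N,p)\,a^{p(1-\gamma_p)}$ and $A_2=\tfrac{\alpha}{3}C^3(N,p)\,a^{(6-N)/2}$, with $a:=\max\{a_1,a_2\}$, I would isolate $\rho^*$ from each equation and equate the two resulting expressions to eliminate $\rho^*$; this leads, after rearrangement of the fractional exponents, to the explicit value \eqref{def:D} of the threshold $D$ and identifies $\rho^*$ as a constant depending only on $N$, $p$, and $\alpha$. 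Case \textup{(ii)} ($a=D$) follows at once from this construction: $\rho^*$ is the unique positive zero of $h$, it is a tangency from below, and $h<0$ elsewhere. For case \textup{(i)} ($a<D$), a monotonicity argument closes the proof: both $A_1$ and $A_2$ are strictly increasing in $a$, so $g(\bar\rho)$ is strictly decreasing in $a$, which together with $g(\bar\rho)=0$ at $a=D$ yields $g(\bar\rho)>0$ for $a<D$; hence $h$ attains a strictly positive maximum and admits exactly two positive zeros $R_0<R_1$ with $h>0$ on $(R_0,R_1)$. The bracketing $R_0<(a/D)\rho^*<\rho^*<R_1$ is then obtained by showing $h((a/D)\rho^*)>0$ and $h(\rho^*)>0$, which by the structure of $h$ automatically places these two points inside $(R_0,R_1)$.

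The main obstacle is the bookkeeping required to derive the closed form \eqref{def:D}: the exponents $p\gamma_p-2$, $4-N$, and $2p\gamma_p-N$ interact through the two monomial identities in a delicate way, and the unusual exponents $\tfrac{N(p-2)-4}{4(p-3)}$ and $\tfrac{4-N}{4(p-3)}$ appearing in $D$ emerge only after a careful simultaneous solve. A secondary difficulty is the verification of the bracketing inequalities, since the three terms of $h$ do not scale uniformly under $\rho\mapsto(a/D)\rho$ (they scale with three different powers of $a/D$), so the comparisons $h((a/D)\rho^*)>0$ and $h(\rho^*)>0$ must be handled term by term, using the explicit form of $D$ together with the strict inequality $a<D$ to extract a usable sign from each contribution.
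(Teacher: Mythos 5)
Your overall strategy (factorising $h'(\rho)=\rho^{\frac{N}{2}-1}g(\rho)$, locating the threshold by the tangency condition $h(\rho^*)=h'(\rho^*)=0$, and checking $h>0$ at $\rho^*$ and at $(a/D)\rho^*$ to get the bracketing) is close to the paper's, and the tangency derivation of $D$ is a legitimate, arguably cleaner, alternative to the paper's computation of $\max\tilde h$ with $\tilde h(\rho)=\frac12\rho^{2-\frac N2}-A_1\rho^{p\gamma_p-\frac N2}$. However, your argument for case \textup{(i)} has a genuine gap: you conflate two distinct thresholds. The condition $g(\bar\rho)=0$ marks the value of $a$ at which the two critical points of $h$ merge; the paper calls this threshold $D_0$ and proves $D<D_0$ strictly. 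The tangency condition marks the strictly smaller value $D$ at which the global maximum of $h$ sits at level zero. At $a=D$ one has $g(\bar\rho)>0$, not $g(\bar\rho)=0$: since $h(\rho^*)=0$ while $h<0$ near $0$ and near $+\infty$, $h'$ changes sign, so $g$ is positive somewhere and hence $g(\bar\rho)=\max g>0$. Your monotonicity argument therefore starts from a false identity; and even after correcting it, the inference ``$g(\bar\rho)>0\Rightarrow h$ attains a strictly positive maximum'' is invalid, because for $D<a<D_0$ one has $g(\bar\rho)>0$ (two critical points) while $\max h<0$. The fix is either the paper's route (treat the two thresholds separately and prove $D<D_0$), or to apply your monotonicity directly to $\max_{\rho>0}h(\rho)$, which is pointwise strictly decreasing in $a=\max\{a_1,a_2\}$ and vanishes at $a=D$ by your tangency construction.

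A second, smaller problem is arithmetic: writing $X=A_1(\rho^*)^{p\gamma_p}$, $Y=A_2(\rho^*)^{\frac N2}$, $Z=(\rho^*)^2$, the system $h(\rho^*)=h'(\rho^*)=0$ reads $\frac Z2=X+Y$ and $Z=p\gamma_pX+\frac N2 Y$, whose solution is $X=\frac{4-N}{2(2p\gamma_p-N)}Z$ and $Y=\frac{p\gamma_p-2}{2p\gamma_p-N}Z$. Your second identity carries a spurious factor $2$ (with your values $X+Y\ne Z/2$ unless $p\gamma_p=2$), and carrying it through the elimination of $\rho^*$ would produce a constant different from the $D$ of \eqref{def:D}, so the lemma as stated would not be recovered.
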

\begin{proof}
\textup{(i)} We first prove that $h(\rho)$ has exactly two critical points. Indeed,
\[
h'(\rho)=0 \Longleftrightarrow \hat h(\rho)=\frac{NA_2}{2}, \quad \mbox{with} \quad \hat h(\rho)=\rho^{2-\frac{N}{2}}-p\gamma_pA_1\rho^{p\gamma_p-\frac{N}{2}}.
\]
By defining $\bar{\rho}=\left(\frac{4-N}{p\gamma_p(2p\gamma_p-N)A_1}\right)^{\frac{1}{p\gamma_p-2}}$, we have that $\hat h(\rho)$ is increasing on $[0,\bar{\rho})$ and decreasing on $(\bar{\rho},+\infty)$.
Since $2<p\gamma_p$, we get
\[
\max_{\rho\geq0}\hat h(\rho)=\hat h(\bar{\rho})
=\frac{2p\gamma_p-4}{2p\gamma_p-N}\left(\frac{4-N}{p\gamma_p(2p\gamma_p-N)A_1}\right)^{\frac{4-N}{2p\gamma_p-4}}
>\frac{NA_2}{2}
\]
if and only if
\begin{equation*}
\begin{aligned}
\max\{a_1,a_2\}<D_0:=
\left(\frac{3^{\frac{N}{4}}}{\alpha C^3(N,p)}\frac{2(2p\gamma_p-4)}{N(2p\gamma_p-N)}\right)^{\frac{N(p-2)-4}{4(p-3)}}
\left(\frac{4-N}{\gamma_p(2p\gamma_p-N)C^p(N,p)}\right)^{\frac{4-N}{4(p-3)}}.
\end{aligned}
\end{equation*}
As $\lim\limits_{\rho\to 0^+}\hat h(\rho)=0^+$ and $\lim\limits_{\rho\to +\infty}\hat h(\rho)=-\infty$, we see  that $h(\rho)$ has exactly two critical points if $\max\{a_1,a_2\}<D_0$.

\noindent Note that $
h(\rho)>0 \Longleftrightarrow \tilde h(\rho)>A_2$ with $\tilde h(\rho)=\frac{1}{2}\rho^{2-\frac{N}{2}}-A_1\rho^{p\gamma_p-\frac{N}{2}}.$
It is not difficult to check that $ \tilde h(\rho)$ is increasing on $[0,\rho_0)$ and decreasing on $(\rho_0,+\infty)$, where $\rho_0=\left(\frac{4-N}{2(2p\gamma_p-N)A_1}\right)^{\frac{1}{p\gamma_p-2}}$.
We have
\[
\max_{\rho\geq0} \tilde h(\rho)= \tilde h(\rho_0)
=\frac{p\gamma_p-2}{2p\gamma_p-N}\left(\frac{4-N}{2(2p\gamma_p-N)A_1}\right)^{\frac{4-N}{2p\gamma_p-4}}
>A_2
\] provided that
\begin{equation*}
\begin{aligned}
\max\{a_1,a_2\}<D:=
\left(\frac{3^{\frac{N}{4}}}{\alpha C^3(N,p)}\frac{p\gamma_p-2}{2p\gamma_p-N}\right)^{\frac{N(p-2)-4}{4(p-3)}}
\left(\frac{p(4-N)}{2(2p\gamma_p-N)C^p(N,p)}\right)^{\frac{4-N}{4(p-3)}}.
\end{aligned}
\end{equation*}
We have $h(\rho)>0$ on an open interval $(R_0,R_1)$ if and only if $\max\{a_1,a_2\}<D$. We claim that $D<D_0$. To this purpose, we only need to prove that $\left(\frac{4}{N}\right)^{N(p-3)}\left(\frac{1}{p-2}\right)^{4-N}>1$ holds. As in \cite[Lemma 5.2]{Soave1}, by letting $z=\frac{4}{N}$ and $y=p-2$,  we have
\begin{equation*}
\left(\frac{4}{N}\right)^{N(p-3)}\left(\frac{1}{p-2}\right)^{4-N}>1\Longleftrightarrow z^{y-1}>y^{z-1}.
\end{equation*} Since $\frac{\log z}{z-1}$ is a monotone decreasing function for $z>0$, we have $D<D_0$.

\noindent If $\max\{a_1,a_2\}<D$, combining $\lim_{\rho\to 0^+}h(\rho)=0^{-}$ and $\lim_{\rho\to +\infty}h(\rho)=-\infty$, we see that $h(\rho)$ has a local minimum point at negative level in $(0,R_0)$ and a global maximum point at positive level in $(R_0,R_1)$. Define
\begin{equation}\label{rhostar}
\rho^*:=\left(\frac{p(4-N)}{2(2p\gamma_p-N)C^p(N,p)}\right)^{\frac{1}{p\gamma_p-2}}D^{-\frac{p(1-\gamma_p)}{p\gamma_p-2}},
\end{equation}
then  $\rho^*<\rho_0$. By direct calculations, we have
\begin{equation*}
\tilde h(\rho^*)>\frac{1}{2}(\rho^*)^{2-\frac{N}{2}}-\frac{C(N,p)D^{p(1-\gamma_p)}}{p}(\rho^*)^{p\gamma_p-\frac{N}{2}}=\frac{\alpha C^3(N,p)D^{\frac{6-N}{2}}}{3} >A_2,
\end{equation*}
then $h(\rho^*)>0$ and $\rho^*>R_0$. Note that $\rho^*$ is independent of $a_1,a_2$. In addition, it holds that
\begin{equation*}
\tilde h\left(\frac{\max\{a_1,a_2\}}{D}\rho^*\right)>\left(\frac{\max\{a_1,a_2\}}{D}\right)^{\frac{4-N}{2}}\frac{\alpha C^3(N,p) D^{\frac{6-N}{2}}}{3} >A_2.
\end{equation*}
\vskip2mm
\noindent \textup{(ii)}  As in the proof of \textup{(i)}, we have $
R_0=\bar{\rho}=\rho_{0}=\rho^*=R_1,\quad \tilde h(\rho_{0})=A_2,\quad \hat h(\bar{\rho})>\frac{N}{2}A_2.$
\end{proof}
Next, we study the structure of the manifold
\begin{equation*}\label{def:bar-p}
\bar{\mathcal{P}}_{a_1,a_2}:=\mathcal{P}_{a_1,a_2}\cap \mathcal{M}.
\end{equation*}
We will observe that a critical point for the energy functional $E$ on $\bar{\mathcal{P}}_{a_1,a_2}$ is a critical point for the functional $E$ on $S(a_1,a_2)$. Hence, $\bar{\mathcal{P}}_{a_1,a_2}$ is a natural constraint.
\begin{Lem}\label{lem2.2}
Let $N\le 3$,  $2_*<p < 2^*$, and $\alpha,a_1, a_2>0$. If $\max\{a_1,a_2\}\le D$, then $\mathcal{P}^0_{a_1,a_2}=\emptyset$, and the set $\bar{\mathcal{P}}_{a_1,a_2}$ is a $ C^1$-submanifold of codimension 1 in $S(a_1,a_2)$.
\end{Lem}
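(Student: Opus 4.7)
The plan has two parts, corresponding to the two assertions of the lemma.

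\textbf{Part 1: emptiness of $\mathcal{P}^{0}_{a_1,a_2}$.} I would assume by contradiction that $\vec{u}\in \mathcal{P}^0_{a_1,a_2}$ and derive bounds on the quantities
$\mu:=\gamma_p\sum_i\|u_i\|_p^p$ and $\nu:=\tfrac{N}{2}\alpha\,\mathrm{Re}\int u_1u_2\overline u_3$
in terms of $\rho^2:=\sum_i\|\nabla u_i\|_2^2$. From $\Psi_{\vec u}'(s)=s\rho^2-s^{p\gamma_p-1}\mu-s^{N/2-1}\nu$ one computes $\Psi_{\vec u}''(1)=\rho^2-(p\gamma_p-1)\mu-(N/2-1)\nu$, and combining this with $P(\vec u)=\rho^2-\mu-\nu=0$ yields the explicit identities
\begin{equation*}
\mu=\frac{4-N}{2p\gamma_p-N}\,\rho^2, \qquad \nu=\frac{2(p\gamma_p-2)}{2p\gamma_p-N}\,\rho^2.
\end{equation*}
Both coefficients are strictly positive (since $2_*<p<2^*$ and $N\le 3<4$), so $\vec u\in\mathcal{M}$ automatically and the nontriviality of $\vec u$ forces $\rho>0$.

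Next, I would feed these identities into the Gagliardo--Nirenberg bounds \eqref{z5}--\eqref{z4} of the form $\mu\le p\gamma_p A_1 \rho^{p\gamma_p}$ and $\nu\le \tfrac{N}{2}A_2\rho^{N/2}$. The first gives $\rho\ge\bar\rho=\left(\tfrac{4-N}{p\gamma_p(2p\gamma_p-N)A_1}\right)^{1/(p\gamma_p-2)}$, and the second gives $\rho^{(4-N)/2}\le\tfrac{N(2p\gamma_p-N)A_2}{4(p\gamma_p-2)}$. Monotonicity of $\rho\mapsto\rho^{(4-N)/2}$ then implies
\begin{equation*}
\bar\rho^{(4-N)/2}\le \frac{N(2p\gamma_p-N)A_2}{4(p\gamma_p-2)}.
\end{equation*}
A direct computation (identical to the one in the proof of Lemma~\ref{lem2.11}(i) giving $\hat h(\bar\rho)$) shows this inequality is equivalent to $\max\{a_1,a_2\}\ge D_0$. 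Since $D<D_0$ was already established, the assumption $\max\{a_1,a_2\}\le D$ contradicts this, closing the argument.

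\textbf{Part 2: submanifold structure.} Here I would use the standard Lagrange / natural constraint argument driven by the $L^2$-preserving dilation. Since $\mathcal{M}$ is open in $H^1(\R^N,\mathbb{C}^3)$, it suffices to show that $\mathcal{P}_{a_1,a_2}$ is a $C^1$-submanifold of codimension 1 in $S(a_1,a_2)$, and then intersect with $\mathcal{M}$. Writing $G_1(\vec u)=\|u_1\|_2^2+\|u_3\|_2^2-a_1^2$ and $G_2(\vec u)=\|u_2\|_2^2+\|u_3\|_2^2-a_2^2$, the manifold $S(a_1,a_2)$ is already a $C^1$-submanifold of codimension two with $\{dG_1,dG_2\}$ linearly independent. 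Since $P\in C^1$ (as $p<2^*$), it remains to check that $dP(\vec u)$ is not a linear combination of $dG_1(\vec u),dG_2(\vec u)$ for $\vec u\in\mathcal{P}_{a_1,a_2}$, or equivalently that the restriction $dP|_{T_{\vec u}S(a_1,a_2)}\not\equiv 0$.

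To see this, consider the curve $s\mapsto s\star\vec u$, whose velocity at $s=1$ lies in $T_{\vec u}S(a_1,a_2)$ because the dilation preserves the $L^2$-masses. Since $P(s\star\vec u)=s\,\Psi_{\vec u}'(s)$, we compute
\begin{equation*}
\left.\frac{d}{ds}\right|_{s=1}\!P(s\star\vec u)=\Psi_{\vec u}'(1)+\Psi_{\vec u}''(1)=P(\vec u)+\Psi_{\vec u}''(1)=\Psi_{\vec u}''(1),
\end{equation*}
and by Part 1 together with $\mathcal{P}^0_{a_1,a_2}=\emptyset$, this quantity is nonzero on $\mathcal{P}_{a_1,a_2}$. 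The implicit function theorem then yields the $C^1$-submanifold structure of codimension one, and intersecting with the open set $\mathcal{M}$ gives the same structure for $\bar{\mathcal{P}}_{a_1,a_2}$.

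The main obstacle is Part 1: translating the two algebraic constraints $P=0$ and $\Psi''(1)=0$ into the precise contradiction with $\max\{a_1,a_2\}\le D$ requires tracking the critical thresholds $D$ and $D_0$ carefully and recognizing that the contradiction inequality is exactly $\hat h(\bar\rho)\le\tfrac{N}{2}A_2$, whose failure was the very characterisation of $\max\{a_1,a_2\}<D_0$ in Lemma~\ref{lem2.11}.
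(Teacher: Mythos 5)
Your Part 1 is correct and is essentially the paper's own argument in different packaging: the paper encodes the two conditions $P(\vec u)=0$ and $\Psi''_{\vec u}=0$ in the family $f(y)=y\Psi'_{\vec u}-\Psi''_{\vec u}\equiv 0$ and evaluates at $y=\tfrac{N}{2}$ and $y=p\gamma_p$, which produces exactly your identities $\mu=\tfrac{4-N}{2p\gamma_p-N}\rho^2$ and $\nu=\tfrac{2(p\gamma_p-2)}{2p\gamma_p-N}\rho^2$; it then feeds them into \eqref{z5}--\eqref{z4} and contradicts $\max\{a_1,a_2\}\le D<D_0$ precisely as you do, via the threshold $\hat h(\bar\rho)>\tfrac{N}{2}A_2$ from Lemma \ref{lem2.11}.

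Part 2 takes a genuinely different route from the paper, and it contains one technical gap. The paper argues by contradiction: if $dP(\vec u)$ were a linear combination of $dG_1,dG_2$ (and $dG$), then $\vec u$ would solve an auxiliary elliptic system, and the Pohozaev identity of \emph{that} system, combined with $P(\vec u)=0$, reproduces the degenerate condition $\Psi''_{\vec u}(1)=0$, contradicting $\mathcal{P}^0_{a_1,a_2}=\emptyset$. Your shortcut --- taking the velocity of the curve $s\mapsto s\star\vec u$ at $s=1$ as a tangent vector to $S(a_1,a_2)$ on which $dP$ evaluates to $\Psi''_{\vec u}(1)\neq 0$ --- is the same computation in disguise, but the curve $s\mapsto s\star\vec u$ is in general \emph{not} differentiable in $H^1$ at $s=1$: its formal velocity is $\tfrac{N}{2}\vec u+x\cdot\nabla\vec u$, which need not lie in $L^2$ for a generic $\vec u\in H^1$, so you cannot directly exhibit it as an element of $T_{\vec u}S(a_1,a_2)$. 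This is exactly why the paper (following Soave) runs the contradiction argument: the hypothesis $dP=\nu_1\,dG_1+\nu_2\,dG_2$ yields an elliptic system, elliptic regularity then legitimizes its Pohozaev identity, and that identity is the rigorous substitute for your formal derivative $\frac{d}{ds}\big|_{s=1}P(s\star\vec u)=\Psi''_{\vec u}(1)$. With that repair your argument is complete. One further small imprecision: $dG_1$ and $dG_2$ are linearly independent only where $u_1\not\equiv 0$ and $u_2\not\equiv 0$ (they coincide if $u_1=u_2=0$), so $S(a_1,a_2)$ is not globally a codimension-two submanifold; this is harmless here because every point of $\mathcal{M}$ has all three components nontrivial, but it should be said.
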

\begin{proof}
We adopt an argument by  Soave from \cite{Soave1}. It is sufficient to prove that $\mathcal{P}^0_{a_1,a_2}$ is empty. Indeed, a consequence of $\mathcal{P}^0_{a_1,a_2}=\emptyset$ is that $\bar{\mathcal{P}}_{a_1,a_2}$ is a $C^1$-submanifold of codimension 1 in $S(a_1,a_2)$.
Assume by contradiction that there exists a $\vec{u} \in \mathcal{P}^0_{a_1,a_2}$ such that $P(\vec{u})=0$, thus
\begin{equation*}
\Psi''_{\vec{u}}(1)=\sum^{3}_{i=1}\int \left(2|\nabla u_i|^{2}-p\gamma^2_p|u_i|^p\right)
-\frac{N^2\alpha}{4}\mathrm{Re}\int u_1u_2\overline{u}_3=0.
\end{equation*}
Let
\begin{equation*}
\begin{aligned}
f(y):&=y\Psi'_{\vec{u}}(1)-\Psi''_{\vec{u}}(1)\\
&=(y-2)\sum^{3}_{i=i}\int |\nabla u_i|^2-(y-p\gamma_p)\gamma_p\sum^{3}_{i=i}\int |u_i|^{p}-\left(y-\frac{N}{2}\right)\frac{N}{2}\alpha\mathrm{Re}\int u_1u_2\overline{u}_3,\\
\end{aligned}
\end{equation*}
and observe that $f(y)=0, ~\forall y\in \R$. Therefore, it follows from $f\left(\frac{N}{2}\right)=0$ that
\begin{equation}\label{z6}
\left(2-\frac{N}{2}\right)\sum^{3}_{i=2}\|\nabla u_i\|^2_2=\gamma_p\left(p\gamma_p-\frac{N}{2}\right)\sum^{3}_{i=2}\|u_i\|^p_p.
\end{equation}
By \eqref{z5} and $\eqref{z6}$, we have
\begin{equation*}
\left(\sum^{3}_{i=1}\|\nabla u_i\|^2_2\right)^{\frac{1}{2}}\ge \left(\frac{4-N}{\gamma_p(2p\gamma_p-N)C^p(N,p)}\right)^{\frac{1}{p\gamma_p-2}}
\left(\max\{a_1,a_2\}\right)^{-\frac{^{p(1-\gamma_p)}}{p\gamma_p-2}}.
\end{equation*}
Since $f(p\gamma_p)=0$, we get
\begin{equation*}
\begin{aligned}
(p\gamma_p-2)&=\left(p\gamma_p-\frac{N}{2}\right)\frac{N}{2}\left(\sum^3\limits_{i=1}\|\nabla u_i\|^2_2\right)^{-1}\alpha\mathrm{Re}\int u_1u_2\overline{u}_3\\
&\le \left(p\gamma_p-\frac{N}{2}\right)\frac{NA_2}{2}\left(\frac{4-N}{\gamma_p(2p\gamma_p-N)C^p(N,p)}\right)^{\frac{N-4}{2p\gamma_p-4}}
\left(\max\{a_1,a_2\}\right)^{\frac{p(1-\gamma_p)(4-N)}{2p\gamma_p-4}},
\end{aligned}
\end{equation*}
which is a contradiction with respect  to the hypothesis $\max\{a_1,a_2\}\le D<D_0$.\vskip2mm

\noindent We omit the proof that $\bar{\mathcal{P}}_{a_1,a_2}$ is a smooth manifold of codimension 1 on $S(a_1,a_2)$.
\end{proof}

\begin{Lem}\label{lem2.3}Let $N\le 3$,  $2_*<p < 2^*$, and $\alpha,a_1, a_2>0$. If $\max\{a_1,a_2\}< D$, for $\vec{u}\in S(a_1,a_2)\cap \mathcal{M}$, then the function $\Psi_{\vec{u}}(s)$ has exactly two critical points $s_{\vec{u}}<\sigma_{\vec{u}}\in\R$ and two zeros $c_{\vec{u}}<d_{\vec{u}}$ with $s_{\vec{u}}<c_{\vec{u}}<\sigma_{\vec{u}}<d_{\vec{u}}$. Moreover, we have the properties below:\smallskip

\noindent \textup{(i)}  $s_{\vec{u}}\star\vec{u}\in\mathcal{P}^{+}_{a_1,a_2}$ and $\sigma_{\vec{u}}\star\vec{u}\in\mathcal{P}^{-}_{a_1,a_2}$. Moreover,  if $s\star\vec{u}\in \mathcal{P}_{a_1,a_2}$, then either $s=s_{\vec{u}}$   or $s=\sigma_{\vec{u}}$;\smallskip

\noindent\textup{(ii)}  $\displaystyle s_{\vec{u}}<R_0\left(\sum^{3}_{i=1}\|\nabla u_i\|^2_2\right)^{-\frac{1}{2}}$ and
\begin{equation*}
\Psi_{\vec{u}}(s_{\vec{u}})=\inf\left\{\Psi_{\vec{u}}(s):s\in\left(0,R_0\left(\sum^{3}_{i=1}\|\nabla u_i\|^2_2\right)^{-\frac{1}{2}}\right)\right\}<0;
\end{equation*}
\smallskip
\noindent\textup{(iii)}  $E\left(\sigma_{\vec{u}}\star\vec{u}\right)=\max\limits_{s\in\R^+}E\left(s\star\vec{u}\right)>0$;\smallskip

\noindent\textup{(iv)} the maps $\vec{u} \mapsto s_{\vec{u}} \in \mathbb{R}^+$
and $\vec{u} \mapsto \sigma_{\vec{u}} \in \mathbb{R}^+$ are of class $ C^1$.
\end{Lem}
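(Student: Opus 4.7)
The plan is to reduce the analysis of $\Psi_{\vec u}$ to a one-variable study of its derivative and then import the concave–convex information about $h$ from Lemma \ref{lem2.11}. Set $A:=\sum_{i=1}^{3}\|\nabla u_i\|_2^2$, $B:=\sum_{i=1}^{3}\|u_i\|_p^p$, $C:=\mathrm{Re}\int u_1u_2\overline{u}_3$. Since $\vec u\in\mathcal{M}$, one has $C>0$, and multiplying $\Psi'_{\vec u}(s)$ by the positive factor $s^{1-N/2}$ reduces the critical-point equation to $\eta(s)=\tfrac{N}{2}\alpha C$, where
\[
\eta(s):=s^{2-N/2}A-\gamma_p s^{p\gamma_p-N/2}B.
\]
Because $2_*<p<2^*$ gives $0<2-N/2<p\gamma_p-N/2$, the function $\eta$ is positive near $0$ with $\eta(0^+)=0$, tends to $-\infty$ at infinity, and has a unique critical point $\bar s$ which is a global maximum. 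Therefore $\eta(s)=\tfrac{N}{2}\alpha C$ has at most two positive solutions.

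The heart of the argument is to show these two solutions actually exist, i.e.\ $\eta(\bar s)>\tfrac{N}{2}\alpha C$. For this I will invoke the lower bound $\Psi_{\vec u}(s)\ge h(s\sqrt{A})$ that follows from the Gagliardo–Nirenberg estimates \eqref{z5}–\eqref{z4} applied to the rescaled triple. By Lemma \ref{lem2.11}(i), when $\max\{a_1,a_2\}<D$ the function $h$ is strictly positive on $(R_0,R_1)$, so $\Psi_{\vec u}(s)>0$ for $s\in(R_0/\sqrt{A},R_1/\sqrt{A})$. Combined with $\Psi_{\vec u}(s)\to 0^-$ as $s\to 0^+$ (since the $-s^{N/2}\alpha C$ term dominates) and $\Psi_{\vec u}(s)\to-\infty$ as $s\to\infty$, we obtain the existence of at least two critical points, hence exactly two: a local minimum $s_{\vec u}<\bar s$ with $\Psi_{\vec u}(s_{\vec u})<0$, and a local (in fact global) maximum $\sigma_{\vec u}>\bar s$ with $\Psi_{\vec u}(\sigma_{\vec u})>0$. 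Since $s\mapsto s\Psi'_{\vec u}(s)=P(s\star\vec u)$, the sign change of $\Psi''_{\vec u}$ at each critical point and the identity $\Psi''_{s\star\vec u}(1)=s^2\Psi''_{\vec u}(s)$ yield $s_{\vec u}\star\vec u\in\mathcal{P}^+_{a_1,a_2}$ and $\sigma_{\vec u}\star\vec u\in\mathcal{P}^-_{a_1,a_2}$; moreover the uniqueness of critical points shows that $\mathcal P_{a_1,a_2}$ is hit by the fiber only at $s_{\vec u}$ and $\sigma_{\vec u}$. The two zeros $c_{\vec u}<d_{\vec u}$ are then produced by the intermediate value theorem on the monotone pieces of $\Psi_{\vec u}$, and they satisfy $s_{\vec u}<c_{\vec u}<\sigma_{\vec u}<d_{\vec u}$ by construction. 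This establishes \textup{(i)} and \textup{(iii)}.

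For \textup{(ii)}, the bound $\Psi_{\vec u}(s)\ge h(s\sqrt{A})>0$ on $(R_0/\sqrt{A},R_1/\sqrt{A})$ together with $\Psi_{\vec u}(s_{\vec u})<0$ immediately forces $s_{\vec u}<R_0/\sqrt{A}$; the identification with the infimum on $(0,R_0/\sqrt{A})$ follows from the monotonicity of $\Psi_{\vec u}$ on this interval, which contains no other critical point.

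Finally, for \textup{(iv)}, I will apply the Implicit Function Theorem to the $C^1$ map $(\vec u,s)\mapsto \Psi'_{\vec u}(s)$ at $(\vec u,s_{\vec u})$ and $(\vec u,\sigma_{\vec u})$. The non-degeneracy condition $\partial_s\Psi'_{\vec u}(s)=\Psi''_{\vec u}(s)\ne 0$ is exactly the statement that $\mathcal{P}^0_{a_1,a_2}=\emptyset$, proved in Lemma \ref{lem2.2}; together with the uniqueness of $s_{\vec u}$ and $\sigma_{\vec u}$, this yields the $C^1$ dependence globally on $S(a_1,a_2)\cap\mathcal{M}$. The main obstacle is the strict inequality $\eta(\bar s)>\tfrac{N}{2}\alpha C$, which is not obtainable by direct computation but is forced by the threshold constant $D$ via the careful concave–convex calibration of $h$ already performed in Lemma \ref{lem2.11}.
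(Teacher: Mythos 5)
Your proposal is correct and follows essentially the same route as the paper: the lower bound $\Psi_{\vec u}(s)\ge h\bigl(s(\sum_i\|\nabla u_i\|_2^2)^{1/2}\bigr)$ together with Lemma \ref{lem2.11}, the limits $\Psi_{\vec u}(0^+)=0^-$ and $\Psi_{\vec u}(+\infty)=-\infty$, the increasing-then-decreasing auxiliary function (your $\eta$, the paper's $\hat h$-type argument) to cap the number of critical points at two, $\mathcal{P}^0_{a_1,a_2}=\emptyset$ from Lemma \ref{lem2.2} for the $\mathcal{P}^{\pm}$ classification, and the Implicit Function Theorem for \textup{(iv)}. Your explicit scaling identity $\Psi''_{s\star\vec u}(1)=s^2\Psi''_{\vec u}(s)$ is a welcome clarification but not a different method.
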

\begin{proof}
Let $\vec{u}\in S(a_1,a_2)$, we have $s\star\vec{u}\in \mathcal{P}_{a_1,a_2}$ if and only if $\Psi'_{\vec{u}}(s)=0$, $\Psi_{\vec u}$ defined in \eqref{eq:fiber}.
By \eqref{b3}-\eqref{def:func-h}, we get
\begin{equation*}
\Psi_{\vec{u}}(s)=E\left(s\star\vec{u}\right)\ge h\left(s\left(\sum^{3}_{i=1}\|\nabla u_i\|^2_2\right)^{\frac{1}{2}}\right).
\end{equation*}
If $\max\{a_1,a_2\}<D$, from point \textup{(i)} of Lemma \ref{lem2.1}, $\Psi_{\vec{u}}(s)$ is positive in  the interval \[
\left(R_0\left(\sum^{3}_{i=1}\|\nabla u_i\|^2_2\right)^{-\frac{1}{2}},R_1\left(\sum^{3}_{i=1}\|\nabla u_i\|^2_2\right)^{-\frac{1}{2}} \right),\] and we have the asymptotic behavior $\lim\limits_{s\to 0^+}\Psi_{\vec{u}}(s)=0^-$, $\lim\limits_{s\to +\infty}\Psi_{\vec{u}}(s)=-\infty$, thus we can see that $\Psi_{\vec{u}}(s)$ has a local minimum point $s_{\vec{u}}$ in the interval $\left(0,R_0\left(\sum^{3}_{i=1}\|\nabla u_i\|^2_2\right)^{-\frac{1}{2}} \right)$ and a global maximum point $\sigma_{\vec{u}}$ in the interval $\left(R_0\left(\sum^{3}_{i=1}\|\nabla u_i\|^2_2\right)^{-\frac{1}{2}},R_1\left(\sum^{3}_{i=1}\|\nabla u_i\|^2_2\right)^{-\frac{1}{2}} \right)$. It follows from Lemma \ref{lem2.1} that $\Psi_{\vec{u}}(s)$ has no other critical points.
\vskip2mm
\noindent Since $\Psi''_{\vec{u}}(s_{\vec{u}})\ge 0$, $\Psi''_{\vec{u}}(\sigma_{\vec{u}})\le 0$ and $\mathcal{P}^0_{a_1,a_2}=\emptyset$, we know that $s_{\vec{u}}\star\vec{u}\in \mathcal{P}^+_{a_1,a_2}$ and $\sigma_{\vec{u}}\star\vec{u}\in \mathcal{P}^-_{a_1,a_2}$. By the monotonicity and the behavior at infinity of $\Psi_{\vec{u}}(s)$, we get that $\Psi_{\vec{u}}(s)$ has exactly two zeros $c_{\vec{u}}$ and $d_{\vec{u}}$ with $s_{\vec{u}}<c_{\vec{u}}<\sigma_{\vec{u}}<d_{\vec{u}}$. Thus, the conclusions \textup{(i)},\textup{(ii)}, and \textup{(iii)} follow from the facts above. Point \textup{(iv)} is a consequence of the  Implicit Function Theorem on the $ C^1$ function $g: \R \times S(a_1,a_2) \mapsto \R^+$ defined by $g=g_{\vec u}(s)= \Psi_{\vec{u}}'(s)$ as $g_{\vec{u}}(s_{\vec{u}}) =0$ and $\partial_s g_{\vec{u}}(s_{\vec{u}}) = \Psi_{\vec{u}}''(s_{\vec{u}})>0$, and similarly for $\vec{u} \mapsto \sigma_{\vec{u}} \in \mathbb{R}^+$. 
\end{proof}

\section{Proof of Theorem \ref{th1.1}}

In this section, we give a proof of Theorem \ref{th1.1}, and we divide it into two cases: $p=2_*$ and $2_*<p<2^*$. We first prove several results eventually leading to the conclusions of Theorem \ref{th1.1}.
\subsection{Mass-energy intercritical case}
Let $c>0$, and for $N\leq 3$ we consider $2_*<p<2^*$. We introduce the following complex valued equation:
\begin{equation} \label{b1}
\begin{cases}
-\Delta u+\lambda u=|u|^{p-2}u, \quad u\in H^1(\R^N, \mathbb C),\\
\displaystyle \int |u|^2=c^2.
\end{cases}
\end{equation}
From \cite{COZ,KKM,CZZ,Soave1}}, the solutions of \eqref{b1} corresponds to the critical points of the functional $J: H^1(\R^N,\mathbb{C})\to \R$,
\begin{equation}\label{z2}
 J(u)=\frac{1}{2}\int |\nabla u|^2-\frac{1}{p}\int |u|^p,
\end{equation}
constrained on the sphere
\[
S(c)=\{u\in H^1(\R^N, \mathbb C) \quad  \hbox{s.t.} \quad  \|u\|^2_2=c^2\},
\] and the parameter $\lambda$ appears as a Lagrange multiplier. We introduce the Pohozaev-Nehari constraint for the single equations \eqref{b1}
 \begin{equation*}\label{def:p-c}
\mathcal{P}_c:=\left\{u\in H^1(\R^N,\mathbb{C})\cap S(c)  \quad \hbox{s.t.} \quad \|\nabla u\|^2_2=\gamma_p\|u\|^p_p \right\},
\end{equation*}
recalling that $\gamma_p=\frac{N(p-2)}{2p}$. Define
\begin{equation}\label{c1}
m(c)=\inf_{u\in \mathcal{P}_c}J(u)>0.
\end{equation}
The next Lemma, see  \cite[Lemma 2.3]{HS},  ensures that the infimum $m(c)$ above is the same if we restrict to real functions.

\begin{Lem}\label{lem2.12}
Let $c>0$, $N\le 3$, and   $2_*<p < 2^*$. We have that
\begin{equation*}
m(c)=\inf_{u\in H^1(\R^N,\R)\cap \mathcal{P}_c}J(u),
\end{equation*}
and $m(c)$ is strictly decreasing with respect to $c$. Moreover, any normalized solution of \eqref{b1} has the form $e^{i\sigma}U$, where $\sigma\in\R$ and $U$ is a positive, radial decreasing normalized solution of \eqref{b1}.
\end{Lem}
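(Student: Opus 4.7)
The plan is to verify the three assertions in order, following the now-standard Pohozaev-minimization machinery for the mass-supercritical scalar NLS.

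\textbf{Equality of the real and complex infima.} The inclusion $\inf_{H^1(\R^N,\R)\cap \mathcal{P}_c} J \geq m(c)$ is trivial. For the converse, take $u\in H^1(\R^N,\mathbb{C})\cap \mathcal{P}_c$ and set $v:=|u|\in H^1(\R^N,\R)$, so that $\|v\|_2=c$, $\|v\|_p=\|u\|_p$, and $\|\nabla v\|_2\leq \|\nabla u\|_2$ by the diamagnetic inequality. The $L^2$-preserving fiber map $s\mapsto \Psi_v(s)=J(s\star v)$ admits a unique positive critical point $s_v$, characterized by $s_v^{p\gamma_p-2}=\|\nabla v\|_2^2/(\gamma_p\|v\|_p^p)$, with $s_v\star v \in \mathcal{P}_c$. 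Since $u\in \mathcal{P}_c$ gives $\|\nabla v\|_2^2/(\gamma_p\|v\|_p^p)\leq 1$, we have $s_v\leq 1$. Using the identity $J(w)=\tfrac{p\gamma_p-2}{2p\gamma_p}\|\nabla w\|_2^2$ valid on $\mathcal{P}_c$, one computes
\begin{equation*}
J(s_v\star v)=\frac{p\gamma_p-2}{2p\gamma_p}s_v^2\|\nabla v\|_2^2 \leq \frac{p\gamma_p-2}{2p\gamma_p}\|\nabla u\|_2^2 = J(u),
\end{equation*}
and the equality of the two infima follows by taking $\inf$ on both sides.

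\textbf{Strict monotonicity.} Given $0<c_1<c_2$, fix $\varepsilon>0$ and pick $u\in \mathcal{P}_{c_1}$ with $J(u)<m(c_1)+\varepsilon$. Set $\tau:=c_2/c_1>1$ and $w:=\tau u$, so $\|w\|_2=c_2$. The critical point $s_w$ of $\Psi_w$ satisfies $s_w^{p\gamma_p-2}=\tau^{2-p}<1$, hence $s_w\in(0,1)$ and $s_w\star w\in \mathcal{P}_{c_2}$. A short computation produces $J(s_w\star w)=\tau^{\beta}J(u)$ with $\beta=2p(\gamma_p-1)/(p\gamma_p-2)<0$ (using $\gamma_p<1$ because $p<2^*$ and $p\gamma_p>2$ because $p>2_*$). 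Hence $m(c_2)\leq \tau^\beta(m(c_1)+\varepsilon)$; letting $\varepsilon\to 0$ yields $m(c_2)\leq\tau^\beta m(c_1)<m(c_1)$.

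\textbf{Structure of the normalized solutions.} Let $u\in H^1(\R^N,\mathbb{C})$ with $\|u\|_2=c$ solve $-\Delta u+\lambda u = |u|^{p-2}u$. The Pohozaev identity gives $u\in \mathcal{P}_c$, and combining it with $\int|\nabla u|^2+\lambda c^2 = \int |u|^p$ forces $\lambda>0$. Writing $u$ as a constant phase times $|u|$ requires equality in the diamagnetic inequality: for a ground state realising $m(c)$, Step 1 saturates $\|\nabla |u|\|_2=\|\nabla u\|_2$ and hence the phase of $u$ is constant; for a general critical point, one argues on the real and imaginary parts, which satisfy a coupled elliptic system that together with unique continuation forces $\arg u$ to be constant wherever $u\neq 0$. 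Setting $U:=|u|$, the function $U\in H^1$ is a nonnegative solution of $-\Delta U+\lambda U=U^{p-1}$. The strong maximum principle yields $U>0$, the Gidas--Ni--Nirenberg moving plane method gives radial symmetry and monotonicity about some translate of the origin, and Kwong's theorem provides the uniqueness of the positive radial decreasing solution with $\|U\|_2=c$.

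\textbf{Principal obstacle.} The most delicate step is the phase-rigidity claim in the last paragraph. For a ground state the variational fiber argument of Step 1 is clean, but obtaining the same statement for every normalized solution of \eqref{b1} requires a careful pointwise analysis of the system satisfied by $(\mathrm{Re}\,u,\mathrm{Im}\,u)$ combined with unique continuation. Everything else reduces to the algebra of the fiber map $\Psi_v$ and the well-known classification results for positive solutions of the scalar equation.
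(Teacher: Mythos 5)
The paper offers no proof of this lemma at all: it is imported verbatim from \cite[Lemma 2.3]{HS}, so there is no in-paper argument to compare against and your self-contained proof is necessarily a different route. Your first two steps are correct. Replacing $u$ by $|u|$, projecting onto $\mathcal{P}_c$ through the unique critical point $s_v$ of the fiber map, and using the identity $J=\frac{p\gamma_p-2}{2p\gamma_p}\|\nabla\cdot\|_2^2$ on $\mathcal{P}_c$ does give the equality of the real and complex infima; and the dilation $w=\tau u$ with $\tau=c_2/c_1>1$, reprojected onto $\mathcal{P}_{c_2}$, gives $m(c_2)\le \tau^{\beta}m(c_1)$ with $\beta=2p(\gamma_p-1)/(p\gamma_p-2)<0$, which combined with $m(c_1)>0$ (a consequence of Gagliardo--Nirenberg on $\mathcal{P}_{c_1}$, as asserted in \eqref{c1}) yields strict monotonicity. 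Both computations check out.

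The genuine gap is in your third step, and it is not merely ``delicate'': the phase-rigidity claim for an \emph{arbitrary} normalized solution cannot be closed by the argument you sketch. Writing $u=v+iw$, the functions $v$ and $w$ both lie in the kernel of the single linear operator $-\Delta+\lambda-|u|^{p-2}$; unique continuation only forbids $v$ or $w$ from vanishing on an open set, it does not force them to be proportional, because that kernel may well be multi-dimensional when $u$ is not a ground state. In fact the assertion is false for general bound states: for $N=2$ the vortex solutions $u=e^{im\theta}f(r)$ with $m\in\Z\setminus\{0\}$ are $H^1$ solutions of \eqref{b1} (after rescaling $\lambda$ to match any prescribed mass in the mass-supercritical range), and they are not of the form $e^{i\sigma}U$ with $U$ positive; real sign-changing radial bound states give further counterexamples. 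The statement can only hold --- and is only used in the paper, e.g.\ in Lemma \ref{lem3.1} --- for minimizers of $J$ on $\mathcal{P}_c$, which is also the scope of \cite[Lemma 2.3]{HS}. For minimizers your saturation argument does work once spelled out: $J(s_v\star|u|)\le J(u)=m(c)$ forces $s_v\star|u|$ to be a real nonnegative minimizer, hence a positive solution by the natural-constraint property and the strong maximum principle; the resulting equalities $s_v=1$ and $\|\nabla|u|\|_2=\|\nabla u\|_2$ give $\nabla(u/|u|)=0$ on the connected set $\{u\neq0\}=\R^N$, and Gidas--Ni--Nirenberg plus Kwong's uniqueness conclude. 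You should therefore restrict the final assertion to normalized ground states rather than all normalized solutions.
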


Let us introduce the set
\begin{equation*}
B_{\rho^*}:=\left\{\vec{u}\in H^1(\R^N,\mathbb{C}^3) \quad \hbox{s.t.} \quad \left(\sum^{3}_{i=1}\|\nabla u_{i}\|^2_2\right)^{\frac{1}{2}}<\rho^*\right \}
\end{equation*}
and
\begin{equation*}
V(a_1,a_2):=S(a_1,a_2)\cap B_{\rho^*}\cap \mathcal{M},
\end{equation*}
where $\mathcal{M}$ is defined in \eqref{eq1.2} and $\rho^*$ in \eqref{rhostar}.
Thus, we can define the following minimization problem:  for any positive $a_1$ and $a_2$ such that $\max\{a_1,a_2\}< D$, let
\begin{equation*}\label{def:m12}
m(a_1,a_2) := \inf_{\vec{u} \in V(a_1,a_2)} E(\vec{u}).
\end{equation*}

\begin{Lem}\label{lem2.4}
Let $N\le 3$,  $2_*<p < 2^*$, and $\alpha,a_1, a_2>0$. If $\max\{a_1,a_2\}< D$, the set $\mathcal{P}^+_{a_1,a_2}$ is contained in $V(a_1,a_2)$ and
\begin{equation}\label{m1}
m(a_1,a_2)=m^+(a_1,a_2):=\inf_{\vec{u}\in\mathcal{P}^+_{a_1,a_2}\cap \mathcal{M}}E(\vec{u})=\inf_{\vec{u}\in\bar{\mathcal{P}}_{a_1,a_2}}E(\vec{u})<0.
\end{equation}
Moreover, there exists $\varepsilon_0>0$ such that for any $0<\varepsilon<\varepsilon_0$
\begin{equation*}
m(a_1,a_2)<\inf_{\vec u\in S(a_1,a_2)\cap (B_{\rho^*}\setminus B_{\rho^*-\varepsilon})} E(\vec u).
\end{equation*}
\end{Lem}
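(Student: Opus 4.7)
My plan has three pieces matching the three assertions. I will write $\rho_{\vec u}:=\bigl(\sum_{i=1}^{3}\|\nabla u_i\|^2_2\bigr)^{1/2}$ and make free use of the fibration $\Psi_{\vec u}$ of \eqref{eq:fiber}, the pointwise bound $E(\vec u)\ge h(\rho_{\vec u})$ from \eqref{b3}, the geometry of $h$ recorded in Lemma~\ref{lem2.11}\textup{(i)}, and the structure of $\Psi_{\vec u}$ described in Lemma~\ref{lem2.3}.

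\textbf{Inclusion $\mathcal P^+_{a_1,a_2}\subset V(a_1,a_2)$.} My first task is to show that $\mathcal P^+_{a_1,a_2}\subset\mathcal M$, which is not obvious from the definitions. Setting $A=\sum\|\nabla u_i\|^2_2$, $B=\sum\|u_i\|^p_p$ and $C=\mathrm{Re}\!\int u_1u_2\overline u_3$, I would eliminate $A$ via $P(\vec u)=0$ inside the expression for $\Psi''_{\vec u}(1)$ to arrive at
\[
0<\Psi''_{\vec u}(1)=-\gamma_p(p\gamma_p-2)\,B+\tfrac{N}{2}\bigl(2-\tfrac{N}{2}\bigr)\alpha\, C,
\]
and since $p\gamma_p>2$, $N/2<2$ (as $N\le 3$), and $B>0$ on $S(a_1,a_2)$, this forces $C>0$, i.e.\ $\vec u\in\mathcal M$. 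With $\vec u\in S(a_1,a_2)\cap\mathcal M$ in hand, Lemma~\ref{lem2.3}\textup{(i)} applies: the condition $\Psi''_{\vec u}(1)>0$ singles out $s=1$ as the local minimiser $s_{\vec u}$ of $\Psi_{\vec u}$, and Lemma~\ref{lem2.3}\textup{(ii)} then yields $1=s_{\vec u}<R_0/\rho_{\vec u}$, so $\rho_{\vec u}<R_0<\rho^*$ and $\vec u\in B_{\rho^*}$. Thus $\vec u\in V(a_1,a_2)$.

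\textbf{Identification $m(a_1,a_2)=m^+(a_1,a_2)=\inf_{\bar{\mathcal P}_{a_1,a_2}}E<0$.} The inclusion just proved gives $m(a_1,a_2)\le m^+(a_1,a_2)$. For the converse I would compare, for each $\vec u\in V(a_1,a_2)$, $E(\vec u)=\Psi_{\vec u}(1)$ with $E(s_{\vec u}\star\vec u)=\Psi_{\vec u}(s_{\vec u})$, noting that $s_{\vec u}\star\vec u\in\mathcal P^+_{a_1,a_2}$ by Lemma~\ref{lem2.3}\textup{(i)}. If $\rho_{\vec u}<R_0$, then $1\in(0,R_0/\rho_{\vec u})$ and Lemma~\ref{lem2.3}\textup{(ii)} gives $\Psi_{\vec u}(s_{\vec u})\le\Psi_{\vec u}(1)$; if $\rho_{\vec u}\in[R_0,\rho^*)$, then $E(\vec u)\ge h(\rho_{\vec u})\ge 0$ by Lemma~\ref{lem2.11}\textup{(i)} while $\Psi_{\vec u}(s_{\vec u})<0$, so the inequality is trivial. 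Infimizing gives $m^+(a_1,a_2)\le m(a_1,a_2)$. Negativity follows by picking an explicit $\vec u_0\in S(a_1,a_2)\cap\mathcal M$ (say three suitably normalised real positive Gaussians, for which $C>0$ is automatic) and observing $E(s_{\vec u_0}\star\vec u_0)=\Psi_{\vec u_0}(s_{\vec u_0})<0$. Finally, since $\mathcal P^0_{a_1,a_2}=\emptyset$ by Lemma~\ref{lem2.2}, $\bar{\mathcal P}_{a_1,a_2}=\mathcal P^+_{a_1,a_2}\cup(\mathcal P^-_{a_1,a_2}\cap\mathcal M)$; on the second piece Lemma~\ref{lem2.3}\textup{(iii)} gives $E>0$, and combined with $m^+(a_1,a_2)<0$ this yields $\inf_{\bar{\mathcal P}_{a_1,a_2}}E=m^+(a_1,a_2)$.

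\textbf{Separation near $\partial B_{\rho^*}$.} Since $\rho^*\in(R_0,R_1)$, Lemma~\ref{lem2.11}\textup{(i)} gives $h(\rho^*)>0$, and by continuity of $h$ there exists $\varepsilon_0>0$ with $h(\rho)\ge h(\rho^*)/2$ for all $\rho\in[\rho^*-\varepsilon_0,\rho^*]$. For any $0<\varepsilon<\varepsilon_0$ and any $\vec u\in S(a_1,a_2)\cap(B_{\rho^*}\setminus B_{\rho^*-\varepsilon})$ one has $\rho_{\vec u}\in[\rho^*-\varepsilon,\rho^*)$, whence $E(\vec u)\ge h(\rho_{\vec u})\ge h(\rho^*)/2>0>m(a_1,a_2)$, giving the required strict separation. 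The delicate step, in my opinion, is the algebraic computation in the first part that places $\mathcal P^+_{a_1,a_2}$ inside $\mathcal M$: without it, one cannot invoke the fibration of Lemma~\ref{lem2.3} to convert the abstract sign condition $\Psi''_{\vec u}(1)>0$ into the concrete bound $\rho_{\vec u}<R_0$ needed to locate $\mathcal P^+_{a_1,a_2}$ inside $B_{\rho^*}$; the remaining arguments are routine consequences of the concave-convex geometry of $h$.
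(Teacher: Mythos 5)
Your proof is correct and follows the same overall strategy as the paper's: the lower bound $E(\vec u)\ge h(\rho_{\vec u})$ from \eqref{b3}, the concave--convex geometry of $h$ from Lemma \ref{lem2.11}, and the fiber-map analysis of Lemma \ref{lem2.3} to pass back and forth between $V(a_1,a_2)$ and $\mathcal{P}^+_{a_1,a_2}$. The one genuine addition is your opening computation: eliminating the gradient term via $P(\vec u)=0$ to get $\Psi''_{\vec u}(1)=-\gamma_p(p\gamma_p-2)\sum_{i}\|u_i\|_p^p+\tfrac{N}{2}\bigl(2-\tfrac{N}{2}\bigr)\alpha\,\mathrm{Re}\int u_1u_2\overline u_3$, which forces $\mathrm{Re}\int u_1u_2\overline u_3>0$ and hence $\mathcal{P}^+_{a_1,a_2}\subset\mathcal{M}$. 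The paper does not prove this: its proof only invokes $\mathcal{P}^+_{a_1,a_2}\cap\mathcal{M}\subset V(a_1,a_2)$ and works with that intersection throughout, so the literal inclusion ``$\mathcal{P}^+_{a_1,a_2}\subset V(a_1,a_2)$'' asserted in the statement is left unjustified there; your identity closes that small gap (recall $V(a_1,a_2)=S(a_1,a_2)\cap B_{\rho^*}\cap\mathcal M$, so membership in $\mathcal M$ is part of what must be shown) and also lets you write $\bar{\mathcal P}_{a_1,a_2}=\mathcal P^+_{a_1,a_2}\cup(\mathcal P^-_{a_1,a_2}\cap\mathcal M)$ cleanly before using $E>0$ on $\mathcal P^-_{a_1,a_2}\cap\mathcal M$. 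The remaining differences are cosmetic: for the separation estimate near $\partial B_{\rho^*}$ you bound $h\ge h(\rho^*)/2>0$ on the annulus, whereas the paper uses $h\ge m(a_1,a_2)/2>m(a_1,a_2)$; both work, and yours in fact shows the infimum over the annulus is positive rather than merely exceeding the negative number $m(a_1,a_2)$.
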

\begin{proof}
For $\vec{u}\in V(a_1,a_2)$, we have
$E(\vec{u})\ge h\left(\left(\sum^{3}_{i=1}\|\nabla u_i\|^2_2\right)^{\frac{1}{2}}\right)\ge \min_{\rho\in [0,R_0]}h(\rho)>-\infty,$ where $R_0$ and $h$ are given in Lemma \ref{lem2.11}.
For a function $\vec{u}\in S(a_1,a_2)\cap \mathcal{M}$, there exists $s_0>0$ small enough such that $s_0\star \vec{u}\in B_{\rho_0}$ and $E(s_0\star \vec{u})<0$. Hence, we get $m(a_1,a_2)\in (-\infty, 0)$.
From Lemma \ref{lem2.3}, we have $\mathcal{P}^+_{a_1,a_2}\cap \mathcal{M}\subset V(a_1,a_2)$, and then $m(a_1,a_2)\le \inf\limits_{\mathcal{P}^+_{a_1,a_2}\cap \mathcal{M}}E$.
In addition, if $\vec{u}\in V(a_1,a_2)$, $s_{\vec{u}}\star \vec{u}\in \mathcal{P}^+_{a_1,a_2}\subset V(a_1,a_2)$, we get
\begin{equation*}
E\left(s_{\vec{u}}\star\vec{u}\right)=\min\left\{E(s\star\vec{u}): s\in \R^+\ \ \text{and}\ \ s\star\vec{u}\in V(a_1,a_2)\right\}\le E(\vec{u}),
\end{equation*}
and it follows that $\inf\limits_{\mathcal{P}^+_{a_1,a_2}\cap \mathcal{M}}E\le m(a_1,a_2)$. By Lemma \ref{lem2.3}, $E(\vec{u})>0$ on $\mathcal{P}^-_{a_1,a_2}$, so we conclude that $m(a_1,a_2)=\inf\limits_{\vec{u}\in\bar{\mathcal{P}}_{a_1,a_2}}E(\vec{u})=\inf\limits_{\vec{u}\in\mathcal{P}^+_{a_1,a_2}\cap \mathcal{M}}E(\vec{u}).$
\vskip1mm

\noindent There exists $\varepsilon>0$ small enough such that, if $\rho \in [\rho^*-\varepsilon,\rho^*]$, we have $h(\rho)\ge \frac{m(a_1,a_2)}{2}$, and then
\begin{equation*}
E(\vec{u})\ge h\left(\sum^{3}_{i=1}\|\nabla u_i\|^2_2\right)\ge \frac{m(a_1,a_2)}{2}>m(a_1,a_2),
\end{equation*}
for any $\vec{u}\in S(a_1,a_2)$ and $\displaystyle \rho^*-\varepsilon\le \sum^{3}_{i=1}\|\nabla u_j\|^2_2\le \rho^*$, where in the last inequality we used the fact that $m$ is negative.
\end{proof}

Let $\vec{u}$ belong to $H^1(\R^N,\mathbb{C}^3)$.  $E(|\vec{u}|)\le E(\vec{u})$, and by the symmetric rearrangement, see \cite{BF,LE},
\[
\|\nabla |u_i|^*\|_2\le \|\nabla |u_i|\|_2\le \|\nabla u_i\|_2, \quad \||u_i|^*\|_p=\|u_i\|_p, \]
and
\[
\int |u_1||u_2||u_3|\le \int |u_1|^*|u_2|^*|u_3|^*,
\]
where $|u_i|^*$ is the Schwarz  symmetric rearrangement of $|u_i|$, for $i=1,2,3.$ Then $E(|\vec{u}|^*)\le E(|\vec{u}|)\le E(\vec{u})$, where the short  notation $|\vec{u}|^*$ stands for  $|\vec{u}|^*=(|u_1|^*,|u_2|^*,|u_3|^*)$. Let us consider  $(v_1,v_2,v_3)\in H^1(\R^N,\R^3)$ a solution to the  system \eqref{eqA0.2}, namely
\begin{equation*}\label{b4}
\begin{cases}
-\Delta v_{1}+ \lambda_1v_{1}=\left|v_{1}\right|^{p-2} v_{1}+\alpha v_{3} v_{2},\\
-\Delta v_{2}+ \lambda_2v_{2}=\left|v_{2}\right|^{p-2} v_{2}+\alpha v_{3} v_{1}, \\
-\Delta v_{3}+ (\lambda_1+\lambda_2)v_{3}=\left|v_{3}\right|^{p-2} v_{3}+\alpha v_{1} v_{2}.
\end{cases}
\end{equation*}
Denote
$\mathcal{P}_{r,a_1,a_2}:=\left\{\vec{v}\in H^1_r(\R^N,\R^3)\cap S(a_1,a_2) \quad \hbox{s.t.} \quad P(\vec{v})=0\right\}$,
and
\[
\mathcal{P}^{+}_{r,a_1,a_2}:=H^1_r(\R^N,\R^3)\cap \mathcal{P}^{+}_{a_1,a_2}.\]
The notation $H^1_r(\R^N,\R^3)$ denotes the subspace of functions in $H^1(\R^N,\R^3)$ which are radially symmetric.
We set
\begin{equation}\label{y2}
m^{+}_{r}(a_1,a_2):=\inf\limits_{\vec{u}\in \mathcal{P}^{+}_{r,a_1,a_2}\cap \mathcal{M}}E(\vec{u}),
\end{equation}
and $W^{+}_r:=\left\{\vec{u}\in H^1_r(\R^N,\R^3)\cap S(a_1,a_2) \quad \hbox{s.t.} \quad E(\vec{u})=m^{+}_r(a_1,a_2)   \right\}$. \smallskip

We have the following.
\begin{Lem}\label{lem2.5}
Let $N\le 3$,  $2_*<p < 2^*$, and $\alpha,a_1, a_2>0$. If $\max\{a_1,a_2\}<D$, then
\begin{equation*}
m^{+}_{r}(a_1,a_2)=\inf_{\vec{u}\in \mathcal{P}^{+}_{r,a_1,a_2}\cap \mathcal{M}} E(\vec{u})=\inf_{\vec{u}\in\mathcal{P}^{+}_{a_1,a_2}\cap \mathcal{M}} E(\vec{u}).
\end{equation*}
Moreover, if $\inf\limits_{\vec u \in \mathcal{P}^{+}_{a_1,a_2}\cap \mathcal{M}} E(\vec u)$ is reached, it is reached by a Schwartz radially symmetric function. More precisely, $\inf\limits_{\vec u\in \mathcal{P}^{+}_{a_1,a_2}\cap \mathcal{M}} E(\vec u)$ is reached by   $(e^{i\theta_1}w_1, e^{i\theta_1}w_2, e^{i(\theta_1+\theta_2)}w_3 )$ where $\vec{w}$ is the minimizer for $\inf\limits_{\vec u\in \mathcal{P}^{+}_{r,a_1,a_2}} E(\vec u)$ and $(\theta_1,\theta_2)\in \R^2$.
\end{Lem}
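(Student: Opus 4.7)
The strategy is to reduce the variational problem on $\mathcal{P}^+_{a_1,a_2}\cap\mathcal{M}$ to its radial counterpart via Schwarz symmetrization followed by the fiber-map rescaling from Lemma~\ref{lem2.3}. I would start by taking an arbitrary $\vec{u}\in\mathcal{P}^+_{a_1,a_2}\cap\mathcal{M}$ and forming $|\vec{u}|^*=(|u_1|^*,|u_2|^*,|u_3|^*)$. The rearrangement inequalities recorded just before the lemma statement yield at once that $|\vec{u}|^*\in S(a_1,a_2)$, that $\sum_i\|\nabla|u_i|^*\|_2^2\le\sum_i\|\nabla u_i\|_2^2$ and $\||u_i|^*\|_p=\|u_i\|_p$, and that
\begin{equation*}
\mathrm{Re}\int u_1u_2\overline u_3\le \int|u_1||u_2||u_3|\le \int|u_1|^*|u_2|^*|u_3|^*.
\end{equation*}
Consequently $|\vec{u}|^*\in\mathcal{M}$ and, for every $s>0$, $\Psi_{|\vec{u}|^*}(s)\le\Psi_{\vec{u}}(s)$.

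Next, I would invoke Lemma~\ref{lem2.3} applied to $|\vec{u}|^*$, obtaining a unique $s_*=s_{|\vec{u}|^*}>0$ with $s_*\star|\vec{u}|^*\in\mathcal{P}^+_{a_1,a_2}$. Radial symmetry and the sign condition defining $\mathcal{M}$ are both stable under the $L^2$-dilation $\star$, so $s_*\star|\vec{u}|^*\in\mathcal{P}^+_{r,a_1,a_2}\cap\mathcal{M}$. Since $\sum_i\|\nabla|u_i|^*\|_2^2\le\sum_i\|\nabla u_i\|_2^2$, the endpoint $R_0(\sum_i\|\nabla|u_i|^*\|_2^2)^{-1/2}$ of the interval in Lemma~\ref{lem2.3}(ii) dominates the corresponding endpoint for $\vec{u}$, which itself is strictly larger than $s_{\vec{u}}=1$ (because $\vec{u}\in\mathcal{P}^+_{a_1,a_2}$). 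Therefore $1$ lies inside the interval on which $\Psi_{|\vec{u}|^*}$ attains its local minimum at $s_*$, and
\begin{equation*}
E\bigl(s_*\star|\vec{u}|^*\bigr)=\Psi_{|\vec{u}|^*}(s_*)\le\Psi_{|\vec{u}|^*}(1)\le\Psi_{\vec{u}}(1)=E(\vec{u}).
\end{equation*}
Taking the infimum gives $m^+_r(a_1,a_2)\le\inf_{\mathcal{P}^+_{a_1,a_2}\cap\mathcal{M}}E$, and the reverse inequality is trivial since $\mathcal{P}^+_{r,a_1,a_2}\subset\mathcal{P}^+_{a_1,a_2}$.

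For the ``moreover'' part, assume the infimum on the larger set is attained by some $\vec{u}$. The previous chain collapses to equalities. From $\|\nabla|u_i|^*\|_2=\|\nabla u_i\|_2$ I would apply the Brothers--Ziemer rigidity theorem to conclude that, after a common translation, $|u_i|=|u_i|^*=:w_i$ for $i=1,2,3$; from equality in the rearrangement chain displayed above I would deduce that $u_1u_2\overline u_3=w_1w_2w_3$ almost everywhere on the common support. Writing $u_j=e^{i\varphi_j}w_j$ wherever $w_j>0$, this condition reads $\varphi_1+\varphi_2-\varphi_3\in 2\pi\Z$ a.e. A strong-maximum-principle argument applied to the elliptic system \eqref{eqA0.2} guarantees $w_j>0$ everywhere and forces each $\varphi_j$ to be a global constant, yielding the claimed form with $\theta_1:=\varphi_1$, $\theta_2:=\varphi_2$, and $\varphi_3=\theta_1+\theta_2$. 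In particular $\vec{w}\in\mathcal{P}^+_{r,a_1,a_2}\cap\mathcal{M}$ and $E(\vec{w})=m^+_r(a_1,a_2)$, so $\vec{w}$ is a Schwarz-symmetric real minimizer.

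The main obstacle is the rigidity step: while the first two steps follow directly from rearrangement inequalities together with the fiber structure in Lemma~\ref{lem2.3}, identifying the phase structure of a complex minimizer requires combining Brothers--Ziemer with a careful strong-maximum-principle analysis of \eqref{eqA0.2} to ensure strict positivity of the $w_j$ and to upgrade the pointwise relation $\varphi_1+\varphi_2-\varphi_3\in 2\pi\Z$ to constant phases.
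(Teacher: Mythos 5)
The first half of your argument (the equality of the two infima) is correct and is essentially the paper's proof, just packaged through the interval characterization of Lemma \ref{lem2.3}(ii) instead of the comparison $s^{+}_{\vec u}\le s^{+}_{\vec w}$ that the paper uses; both hinge on the same facts, namely $\Psi_{|\vec u|^*}(s)\le\Psi_{\vec u}(s)$ and the fact that $1=s_{\vec u}$ lies in the interval where $\Psi_{|\vec u|^*}$ attains its local minimum.

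The ``moreover'' part is where you diverge, and two points need attention. First, the Brothers--Ziemer rigidity theorem is both stronger than needed and not freely available: it requires the co-area regularity hypothesis that $\{\nabla |u_i|^*=0\}\cap\{0<|u_i|^*<\sup|u_i|\}$ has measure zero, which you do not verify (and without which the conclusion $|u_i|=|u_i|^*$ after translation is false in general). You do not need it: once the chain $E(s_*\star|\vec u|^*)\le\Psi_{|\vec u|^*}(1)\le\Psi_{\vec u}(1)=E(\vec u)$ collapses to equalities, $s_*\star|\vec u|^*$ (in fact $|\vec u|^*$ itself, since equality forces $s_*=1$) is already a Schwarz-symmetric real minimizer lying in $\mathcal{P}^{+}_{r,a_1,a_2}\cap\mathcal{M}$, which is exactly what the statement asks for; this is the route the paper takes. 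Second, your phase analysis misattributes the mechanism: the relation $\varphi_1+\varphi_2-\varphi_3\in 2\pi\Z$ a.e.\ obtained from equality in $\mathrm{Re}\int u_1u_2\overline u_3=\int|u_1||u_2||u_3|$ does not, by itself or together with the strong maximum principle, force each $\varphi_j$ to be constant. What does force it is the equality $\|\nabla|u_j|\|_2=\|\nabla u_j\|_2$ (which you have available from $E(|\vec u|)=E(\vec u)$) combined with the pointwise identity $|\nabla u_j|^2=|\nabla|u_j||^2+|u_j|^2|\nabla(u_j/|u_j|)|^2$, which yields $\nabla(u_j/|u_j|)=0$ and hence $u_j=e^{i\theta_j}|u_j|$ with $\theta_j$ constant; only then does the product relation give $e^{i\theta_3}=e^{i(\theta_1+\theta_2)}$. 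As written, the step ``forces each $\varphi_j$ to be a global constant'' is a gap.
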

\begin{proof}
It follows from $\mathcal{P}^{+}_{r,a_1,a_2}\subset \mathcal{P}^{+}_{a_1,a_2}$ that $\displaystyle\inf\limits_{\vec u\in\mathcal{P}^{+}_{r,a_1,a_2}\cap \mathcal{M}}  E\ge\inf\limits_{\vec u\in \mathcal{P}^{+}_{a_1,a_2}\cap\mathcal{M}}  E(\vec u)$.
From Lemma \ref{lem2.3}, for any $\vec{u}\in S(a_1,a_2)\cap\mathcal{M}$, there exists $s^{+}_{\vec{u}}\in \R^+$ such that $s^{+}_{\vec{u}}\star\vec{u}\in\mathcal{P}^{+}_{a_1,a_2}$, and
\begin{equation*}
\inf_{\vec{u}\in\mathcal{P}^{+}_{a_1,a_2}\cap \mathcal{M}} E(\vec{u})=\inf_{\vec{u}\in S(a_1,a_2)\cap\mathcal{M}}\min _{0<\sigma\le s^+_{\vec{u}}} E(\sigma\star\vec{u}).
\end{equation*}
For $\vec{u}\in S(a_1,a_2)$, let $\vec{w}\in S_r(a_1,a_2)$ be the Schwarz rearrangement of $(|u_1|,|u_2|,|u_3|)$, i.e. $(w_1,w_2,w_3):=\left(|u_1|^*,|u_2|^*,|u_3|^*\right)$. Then,  for all $\sigma>0$, $E(\sigma\star \vec{w})\le E(\sigma\star \vec{u})$. Recalling that $\Psi'_{\vec{u}}(\sigma)=P(\sigma\star \vec{u})$, see \eqref{eq:fiber}, we have
\begin{equation*}
\lim_{\sigma\to0^+}\Psi'_{\vec{w}}(\sigma)\le \lim_{\sigma\to0^+}\Psi'_{\vec{u}}(\sigma)<0 \quad \text{and} \quad \Psi''_{\vec{w}}(\sigma)\le \Psi''_{\vec{u}}(\sigma) \quad \forall \ \sigma>0.
\end{equation*}

\noindent It follows that $-\infty<s^+_{\vec{u}}\le s^+_{\vec{w}}$.
Therefore, we have
\begin{equation*}
\min _{0<\sigma<s^+_{\vec{w}}} E(\sigma\star\vec{w})\le \min _{0<\sigma<s^+_{\vec{u}}} E(\sigma\star\vec{u}),
\end{equation*}
and then $\inf\limits_{\vec u\in\mathcal{P}^{+}_{r,a_1,a_2}\cap \mathcal{M}} E(\vec u)\le\inf\limits_{\vec u\in\mathcal{P}^{+}_{a_1,a_2}\cap\mathcal{M}} E(\vec u)$.
\vskip2mm
\noindent First, we set $\vec{v}:=(e^{i\theta_1}w_1,e^{i\theta_2}w_2,e^{i(\theta_1+\theta_2)}w_3)$, where $\theta_1,\theta_2\in \R$ and $E(\vec{w})=m^{+}_r(a_1,a_2)$. Then, $\vec{v}\in S(a_1,a_2)$ and
\begin{equation*}
E(\vec{v})=\frac{1}{2}\sum^{3}_{i=1}\|\nabla w_i\|^2_2-\frac{1}{p}\sum^{3}_{i=1}\|w_i\|^p_p-\alpha \mathrm{Re} \int e^{i\theta_1}w_1e^{i\theta_2}w_2e^{-i(\theta_1+\theta_2)}w_3=E(\vec{w}).
\end{equation*}
Thus, $\left\{(e^{i\theta_1}w_1,e^{i\theta_2}w_2,e^{i(\theta_1+\theta_2)}w_3) \quad \hbox{s.t.}\quad\theta_1, \theta_2 \in\R,  \vec{w}\in W^{+}_r\right\}\subset \mathcal G$.\\

We claim that for any $\vec{u}\in \mathcal G$, the Schwarz symmetric rearrangement of $(|u_{1}|,|u_{2}|, |u_{3}|)$, that we define by $\vec{w}:=(|u_{1}|^*, |u_{2}|^*, |u_{3}|^*)\in H^1_r(\R^N,\R^3)\cap S(a_1,a_2)$, belongs to $\mathcal{P}^+_{r,a_1,a_2}$. Indeed, if $ \sum^{3}_{j=1}\|\nabla w_j\|^2_2<\sum^{3}_{j=1}\|\nabla u_{j}\|^2_2$ or $\mathrm{Re}\int u_{1}u_{2}\overline{u}_3>\mathrm{Re}\int w_1w_2\overline{w}_3$, then $E(\vec{u})<E(\vec{w})$.
We have
\begin{equation*}
\begin{aligned}
\inf_{\vec{u}\in\mathcal{P}^{+}_{a_1,a_2}\cap \mathcal{M}} E(\vec{u})&=\inf_{\vec{u}\in S(a_1,a_2)\cap \mathcal{M}}\min _{0<\sigma\le s^+_{\vec{u}}} E(\sigma\star\vec{u})\\
&\le \min_{0<\sigma\le s^+_{\vec{w}}} E(\sigma\star\vec{w})< \min_{0<\sigma\le s^+_{\vec{w}}} E(\sigma\star\vec{u})=\inf_{\vec{u}\in\mathcal{P}^{+}_{a_1,a_2}\cap \mathcal{M}} E(\vec{u}),
\end{aligned}
\end{equation*}
which is a contradiction. In the chain of relations above, we used in order: the definition, the fact that $\vec w\in S(a_1,a_2)$, the relation $E(\vec{u})<E(\vec{w})$, and in the last identity we employed  the inequality  $s^{+}_{\vec u}\le s^{+}_{\vec w}$, jointly with the fact that $\vec u$ is in the set $\mathcal G$ of ground states. Therefore, $\vec{w}\in \mathcal{P}^{+}_{r,a_1,a_2}$ and $E(\vec{w})=E(\vec{u})$.
The fact that for any $\vec{u}\in \mathcal G$, we have $\vec{u}=(e^{i\theta_1}v_1,e^{i\theta_2}v_2,e^{i(\theta_1+\theta_2)}v_3)$ and $\vec{v}\in W^{+}_r$ is standard.
\end{proof}

\begin{Lem}\label{lem2.6}
Let $N\le 3$,  $2_*<p < 2^*$, and $\alpha,a_1, a_2>0$. If $\max\{a_1,a_2\}<D$, then \eqref{eqA0.2} has a ground state solution $(\lambda_1,\lambda_2,u_1,u_2,u_3)$ with $\lambda_1,\lambda_2>0$, and $\vec{u}\in S(a_1,a_2)$ is positive, radially symmetric, and decreasing.
\end{Lem}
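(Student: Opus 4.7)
My plan is to realize a ground state as a constrained minimizer of $E$ on $\bar{\mathcal P}_{a_1,a_2}$, using the $C^1$-submanifold structure from Lemma \ref{lem2.2}, the reduction to radial, Schwarz-rearranged real-valued competitors from Lemma \ref{lem2.5}, and the strict negativity of $m^+_r$ from Lemma \ref{lem2.4}. Let $\{\vec u_n\}\subset \mathcal{P}^+_{r,a_1,a_2}\cap \mathcal{M}$ be a minimizing sequence for $m^+_r(a_1,a_2)$ consisting of positive, radially symmetric, decreasing functions. Since $\mathcal{P}^+_{a_1,a_2}\cap\mathcal{M}\subset V(a_1,a_2)$ by Lemma \ref{lem2.4}, $\sum_i\|\nabla u_{n,i}\|_2^2\le (\rho^*)^2$, so $\{\vec u_n\}$ is bounded in $H^1_r(\R^N,\R^3)$. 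Applying Ekeland's variational principle on the smooth submanifold $\bar{\mathcal P}_{a_1,a_2}$ and using $\mathcal P^0_{a_1,a_2}=\emptyset$, I may assume that $\{\vec u_n\}$ is in fact a Palais-Smale sequence for $E|_{S(a_1,a_2)}$. This produces bounded Lagrange multipliers $\lambda_{n,1},\lambda_{n,2}\in\R$ with $\lambda_{n,3}=\lambda_{n,1}+\lambda_{n,2}$, such that the $i$-th equation of \eqref{eqA0.2} evaluated at $\vec u_n$ tends to zero in $H^{-1}(\R^N)$.

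Extracting subsequences, $\vec u_n\rightharpoonup \vec u$ in $H^1_r(\R^N,\R^3)$ and $\lambda_{n,i}\to\lambda_i$. For $N\ge 2$ the Strauss-type compact embedding $H^1_r(\R^N)\hookrightarrow L^q(\R^N)$, $2<q<2^*$, yields strong convergence of the nonlinearity and the trilinear coupling term; for $N=1$ an additional concentration-compactness argument is needed since radial symmetry does not improve compactness on the line. Passing to the limit, $\vec u$ solves \eqref{eqA0.2} with multipliers $(\lambda_1,\lambda_2,\lambda_1+\lambda_2)$ and satisfies the Pohozaev identity $P(\vec u)=0$ (Lemma \ref{lem2.1}). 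Nontriviality of $\vec u$ follows from $E(\vec u_n)\to m^+_r(a_1,a_2)<0$: indeed, if $\vec u\equiv 0$, the strong $L^p$ and $L^3$ convergence would force $\liminf E(\vec u_n)\ge 0$, a contradiction. A sign analysis, combining pairing with $u_i$, the Pohozaev identity, and the convex-concave geometry of Lemma \ref{lem2.11}, rules out nonpositive multipliers and gives $\lambda_1,\lambda_2>0$.

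With $\lambda_i>0$, testing the $n$-th equation and its limit against $u_{n,i}$ and $u_i$ respectively, and invoking the strong $L^p$ and $L^3$ convergence, I obtain
\begin{equation*}
\sum_{i=1}^{3}\Bigl(\|\nabla u_{n,i}\|_2^2+\lambda_i\|u_{n,i}\|_2^2\Bigr)\longrightarrow \sum_{i=1}^{3}\Bigl(\|\nabla u_i\|_2^2+\lambda_i\|u_i\|_2^2\Bigr).
\end{equation*}
Since every summand is weakly lower semicontinuous and positively weighted, each of them must converge individually, so $\vec u_n\to\vec u$ strongly in $H^1_r(\R^N,\R^3)$. In particular $\vec u\in S(a_1,a_2)\cap\mathcal M$ and $E(\vec u)=m^+_r(a_1,a_2)$. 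By Lemma \ref{lem2.5}, $\vec u\in\mathcal G$ up to the $S^1\times S^1$ phase action, and Schwarz rearrangement provides a positive, radially symmetric, decreasing representative. Standard elliptic bootstrap on \eqref{b4} upgrades $\vec u$ to $C^2(\R^N,\R^3)$. The chief difficulty is the strong $L^2$ convergence in the penultimate step: radial symmetry alone provides compactness only for $L^q$ with $2<q<2^*$, so mass could a priori escape to infinity; the positivity of both Lagrange multipliers, together with the Pohozaev structure and the strict negativity of $m^+_r$, is what closes this gap.
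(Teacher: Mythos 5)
Your overall strategy coincides with the paper's: minimize over the radial Pohozaev set, extract a Palais--Smale sequence via Ekeland, pass to a weak limit solving \eqref{eqA0.2}, and recover strong $H^1$ convergence from the positivity of the Lagrange multipliers combined with weak lower semicontinuity (the paper phrases this last step as $\lambda_1 a_1^2+\lambda_2a_2^2=\lambda_1 Q_1(\vec u)+\lambda_2 Q_2(\vec u)$ forcing equality in $Q_i(\vec u)\le a_i^2$, which is the same mechanism as your term-by-term convergence).

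There is, however, a genuine gap in your nontriviality step. You only exclude $\vec u\equiv 0$, but your subsequent argument requires \emph{all three} components of the weak limit to be nontrivial. If, say, $u_2\equiv u_3\equiv 0$, the second and third limit equations are vacuous and no sign information on $\lambda_2$ can be extracted; then the summand $\lambda_2\|u_{2,n}\|_2^2$ in your convergence identity is not positively weighted, the term-by-term argument collapses, and mass can escape to infinity in the second constraint. The paper closes this by a case analysis: if exactly one component survives, Brezis--Lieb together with $P(\vec u_n)=o_n(1)$ forces $E(\vec u_n)\to(\tfrac{\gamma_p}{2}-\tfrac1p)\|u_i\|_p^p\ge 0$, contradicting $m^+_r(a_1,a_2)<0$; and exactly two surviving components is impossible by the structure of the coupled system (each equation with a vanishing component forces another to vanish). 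Relatedly, your ``sign analysis'' for $\lambda_1,\lambda_2>0$ is only an assertion; the paper invokes \cite[Lemma A.2]{IN} after full nontriviality is known, and some such input is indispensable since the positivity of both multipliers is exactly what prevents loss of compactness. You should also verify that the limit remains in $\mathcal M$ (the paper does this by showing $\mathrm{Re}\int u_1u_2\overline u_3\le 0$ together with $P(\vec u)=0$ would force $\sum_i\|\nabla u_i\|_2^2$ above a threshold incompatible with $\vec u\in B_{\rho^*}$), since $\mathcal M$ is an open condition and is not automatically preserved under weak limits.
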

\begin{proof}
By Lemma \ref{lem2.5}, we only need to show that $m^+_r(a_1,a_2)$ is attained.
Since $m^+_r(a_1,a_2)=\inf_{\vec u\in V(a_1,a_2)}E(\vec u)$,  and by using the symmetric decreasing rearrangement, we obtain a minimizing sequence $\{\vec{w}_n\}$,  where $\vec{w}_n\in H^1_r(\R^N,\R^3)\cap V(a_1,a_2)$ is decreasing for every $n$. Moreover, by Lemma \ref{lem2.4}, $E(s_{\vec{w}_n}\star \vec{w}_n)\le E(\vec{w}_n)$ and $s_{\vec{w}_n}\star \vec{w}_n\in V(a_1,a_2)$.
Replacing $\vec{w}_n$ by $s_{\vec{w}_n}\star \vec{w}_n$, we have a new minimizing sequence $s_{\vec{w}_n}\star \vec{w}_n\in \mathcal{P}^+_{a_1,a_2}\cap \mathcal{M}$. Thus, by the Ekeland's variational principle,
we can choose a non-negative radial Palais-Smale sequence $\{\vec{u}_n\}$ for $E|_{S(a_1,a_2)}$ at level $m^+_r(a_1,a_2)$, with $P(\vec{u}_n)=o_n(1)$ and such that $\lim\limits_{n\to \infty} E(\vec{u}_n)=m^+_r(a_1,a_2)$ and $E'|_{S(a_1,a_2)}(\vec{u}_n)\to 0$ as $n\to \infty$ (see also \cite[Lemma 3.7]{JL}). Since
\begin{equation*}
\begin{aligned}
m^+_r(a_1,a_2)+o_n(1)=E(\vec{u}_n)&=\left(\frac{1}{2}-\frac{1}{p\gamma_p}\right)\sum^{3}_{i=1}\|\nabla u_{i,n}\|^2_2-\left(1-\frac{N}{2p\gamma_p}\right)\alpha \int u_{1,n}u_{2,n}u_{3,n},\\
\end{aligned}
\end{equation*}
we have that the sequence $\{\vec{u}_n\}$ is bounded in $H^1_r(\R^N,\R^3)$. Indeed, using that $m^+_r(a_1,a_2)<0$, by  H\"older  and  Gagliardo-Nirenberg inequalities,
\begin{equation*}
\begin{aligned}
\sum^{3}_{i=1}\int |\nabla u_{i,n}|^2&\le \frac{(2p\gamma_p-N)\alpha}{p\gamma_p-2} \int u_{1,n}u_{2,n}u_{3,n}\\
& \le \frac{(2p\gamma_p-N)\alpha}{3(p\gamma_p-2)} \max\{a^{\frac{6-N}{2}}_1,a^{\frac{6-N}{2}}_2\}C(N,p)^3\left(\sum^{3}_{i=1}\|\nabla u_{i,n}\|^{\frac{N}{2}}_2\right).
\end{aligned}
\end{equation*}
As  $2_*<p<2^*$,  we have $\frac{N}{2}<2<p\gamma_p$, hence the boundedness. Then there exists $(u_1,u_2,u_3)$ such that $(u_{1,n},u_{2,n},u_{3,n})\rightharpoonup (u_1,u_2,u_3)$ weakly in $H^1_r(\R^N,\R^3)$, $(u_{1,n},u_{2,n},u_{3,n})\to (u_1,u_2,u_3)$ strongly in $L^{r}\times L^r\times L^r$ for $r\in (2, 2^*)$, and a.e. in $\R^N\times \R^N\times \R^N$ as $n\to \infty$. Therefore, $u_i\ge 0$ are radial functions for  $i=1,2,3$.
\vskip2mm
By the Lagrange multiplier's rule (see \cite[Lemma 3]{BL}), we know that there exists a sequence $\{(\lambda_{1,n},\lambda_{2,n})\}\subset \R\times\R$ such that
\begin{equation}\label{b7}
\begin{cases}
\displaystyle \int \left(\nabla u_{1,n}\nabla\phi_1+\lambda_{1,n}u_{1,n}\phi_1-|u_{1,n}|^{p-2}u_{1,n}\phi_1-\alpha u_{3,n}u_{2,n}\phi_1\right)=o_n(1)\|\phi_1\|_{H^1(\R^N)},\vspace{0.5ex}\\
\displaystyle\int \left(\nabla u_{2,n}\nabla\phi_2+\lambda_{2,n}u_{2,n}\phi_2-|u_{2,n}|^{p-2}u_{2,n}\phi_2-\alpha u_{3,n}u_{1,n}\phi_2\right)=o_n(1)\|\phi_2\|_{H^1(\R^N)},\vspace{0.5ex}\\
\displaystyle\int \left(\nabla u_{3,n}\nabla\phi_3+(\lambda_{1,n}+\lambda_{2,n})u_{3,n}\phi_3-|u_{3,n}|^{p-2}u_{3,n}\phi_3-\alpha u_{1,n}u_{2,n}\phi_2\right)=o_n(1)\|\phi_3\|_{H^1(\R^N)},
\end{cases}
\end{equation}
as $n\to \infty$, for every $\phi_i\in H^1(\R^N,\R)~(i=1,2,3)$. In particular, if we take $(\phi_1,\phi_2,\phi_3)=(u_{1,n},u_{2,n},u_{3,n})$, we have  that $(\lambda_{1,n},\lambda_{2,n})$ is bounded, therefore up to a subsequence we have convergence  $(\lambda_{1,n},\lambda_{2,n})\to (\lambda_{1},\lambda_{2})\in \R^2$.  Since $(u_{1,n},u_{2,n},u_{3,n})\rightharpoonup (u_1,u_2,u_3)$ weakly in $H^1_r(\R^N,\R^3)$, passing to the limit in \eqref{b7}, we see that  $(u_1,u_2,u_3)$ weakly solves
\begin{equation*}
\begin{cases}
-\Delta u_1+\lambda_1u_1=|u_1|^{p-2}u_1+\alpha u_3u_2,\\
-\Delta u_2+\lambda_2u_2=|u_2|^{p-2}u_2+\alpha u_3u_1,\\
-\Delta u_3+(\lambda_1+\lambda_2)u_3=|u_3|^{p-2}u_3+\alpha u_1u_2.
\end{cases}
\end{equation*}
In addition, we claim that $\displaystyle\mathrm{Re}\int u_{1}u_2\overline{u}_3>0$. If not, we have
\begin{equation*}
\sum^{3}_{i=1}\|\nabla u_i\|^2_2\le \gamma_p \sum^{3}_{i=1}\|u_i\|^p_p\le p\gamma_p A_1 \left(\sum^{3}_{i=1}\|\nabla u_i\|^2_2\right)^{\frac{p\gamma_p}{2}},
\end{equation*}
and then $\left(p\gamma_p A_1\right)^{-\frac{2}{p\gamma_p-2}}\le \sum^{3}_{i=1}\|\nabla u_i\|^2_2$. Moreover, as $\mathcal{P}^+_{a_1,a_2}\subset V(a_1,a_2)$, we get $\vec{u}\in B_{\rho^*}$, and this is a contradiction with $\max\{a_1,a_2\}<D$.
From $P(\vec{u})=0$, we conclude that
\begin{equation}\label{d3}
\lambda_1\|u_1\|^2_2+\lambda_2\|u_2\|^2_2+(\lambda_1+\lambda_2)\|u_3\|^2_2=\sum^{3}_{i=1}(1-\gamma_p)\|u_i\|^p_p
+\left(3-\frac{N}{2}\right)\alpha\int u_1u_2u_3.
\end{equation}
By $P(\vec{u}_n)=o_n(1)$, we obtain
\begin{equation}\label{d4}
\begin{aligned}
\lambda_1a^2_1+\lambda_2a^2_2&=\lim_{n\to\infty}\left(\lambda_1\|u_{1,n}\|^2_2+\lambda_2\|u_{2,n}\|^2_2+(\lambda_1+\lambda_2)\|u_{3,n}\|^2_2\right)\\
&=\lim_{n\to\infty}\left(\sum^{3}_{i=1}(1-\gamma_p)\|u_{i,n}\|^p_p+\left(3-\frac{N}{2}\right)\alpha\int u_{1,n}u_{2,n}u_{3,n}\right)\\
&=(1-\gamma_p)\|u_{i}\|^p_p+\left(3-\frac{N}{2}\right)\alpha\int u_{1}u_{2}u_{3}.
\end{aligned}
\end{equation}
\vskip1mm
\noindent We claim that $u_1\not\equiv0$, $u_2\not\equiv 0$ and $u_3\not\equiv0$.
\vskip1mm
\noindent {\bf Case 1.} If $u_i\equiv0$ for any $i=1,2,3$, then $\displaystyle\int |u_{i,n}|^p\to 0$, $\displaystyle\int u_{1,n}u_{2,n}u_{3,n}\to 0$, we have
\begin{equation*}
P(\vec{u}_n)=\sum^{3}_{i=1}\|\nabla u_{i,n}\|^2_2=o_n(1).
\end{equation*}
Therefore,
\begin{equation*}
m^+_r(a_1,a_2)+o_n(1)=E(\vec{u}_n)=o_n(1),
\end{equation*}
and this contradicts the fact that $m^+_r(a_1,a_2)<0$.
\vskip1mm
\noindent{\bf Case 2.} If $u_i\not\equiv0$, $u_j\equiv0$ and $u_l\equiv0$, $i,j,l\in \{1,2,3\}$, then $u_{j,n}\to 0$ and $u_{l,n}\to 0$ in $L^p$. Let $\tilde{u}_{i,n}=u_{i,n}-u_i$, then  $\tilde{u}_{i,n}\to 0$ in $L^p$.
By the Brezis-Lieb Lemma \cite{BrLi}, we deduce that
\begin{equation*}
\begin{aligned}
P(\vec{u}_n)&=\sum^{3}_{i=1}\|\nabla u_{i,n}\|^2_2-\gamma_p\|u_{i,n}\|^p_p+o_n(1)\\
&=\|\nabla \tilde{u}_{i,n}\|^2_2+\|\nabla u_{j,n}\|^2_2+\|\nabla u_{l,n}\|^2_2+\|\nabla u_i\|^2_2-\gamma_p\|u_i\|^p_p+o_n(1).
\end{aligned}
\end{equation*}
Thus,
\begin{equation*}
\begin{aligned}
m^+_r(a_1,a_2)+o_n(1)&=E(\vec{u}_n)=\left(\frac{\gamma_p}{2}-\frac{1}{p}\right)\|u_i\|^p_p+o_n(1)\ge 0,
\end{aligned}
\end{equation*}
which contradicts  $m^+_r(a_1,a_2)<0$.
\vskip1mm
\noindent{\bf Case 3.} If $u_i\not\equiv0$, $u_j\not\equiv 0$ and $u_l\equiv0$. By the structure of system \eqref{eqA0.2}, we get $u_i \equiv 0$ or $u_j\equiv0$, so Case 3 does not happen.

\vskip1mm

\noindent Therefore, $u_i\not\equiv0$ for all $i=1,2,3$. It remains to show that $m^+_r(a_1,a_2)$ is achieved.
From \cite[Lemma A.2]{IN}, we get $\lambda_1,\lambda_2>0$. Moreover, combining \eqref{d3} with \eqref{d4}, we have
\begin{equation}\label{d51}
\lambda_1a^2_1+ \lambda_2a^2_2=\lambda_1\|u_1\|^2_2+\lambda_2\|u_2\|^2_2+(\lambda_1+\lambda_2)\|u_3\|^2_2.
\end{equation}
Since $\|u_1\|^2_2+\|u_3\|^2_2\le a^2_1$ and $\|u_2\|^2_2+\|u_3\|^2_2\le a^2_2$, it follows from \eqref{d51} that $\|u_1\|^2_2+\|u_3\|^2_2=a^2_1$ and $\|u_2\|^2_2+\|u_3\|^2_2=a^2_2$, and hence $\vec{u}\in \mathcal{P}_{r,a_1,a_2}$. By the maximum principle (see \cite[Theorem 2.10]{hanq}), $u_i>0~(i=1,2,3)$. We then conclude that $\vec{u}_n\to \vec{u}$ in $H^1_r(\R^N,\R^3)$ and $E(\vec{u})=m^+_r(a_1,a_2)$.
\vskip2mm
\noindent In conclusion, we have proved that $m^+_r(a_1,a_2)$ is attained by a function $\vec{u}$ which is positive, radially symmetric, and decreasing in $r=|x|$. Therefore, the proof is complete.
\end{proof}

\vskip1mm
We look for the existence of $(\omega_1,\omega_2,\vec v)\in \R^2\times H^1(\R^N,\mathbb{C}^3)$ satisfying \eqref{n1}
 (see also \cite{KO,LO}) with $Q_1(\vec{v})=a^2_1$ and $Q_2(\vec{v})=a^2_2$.
It is important for our purpose to study the asymptotic behavior of minimizers for $m^+(a_1,a_2)$ because somehow \eqref{n1} can be seen as a limiting equation of problem \eqref{eqA0.2}, see Proposition \ref{lem4.3} below.
Then, we find the critical points of $E_0:H^1(\R^N,\mathbb{C}^3)\to \R$
\begin{equation*}\label{def:E0}
E_0(\vec{v}):=\frac{1}{2}\sum^{3}_{i=1}\|\nabla v_i\|^2_2-\mathrm{Re}\int v_1v_2\overline{v}_3
\end{equation*}
constrained on $S(a_1,a_2)$. Let us observe that  in \cite[Theorem 1.3]{KO}, only the case $N\le 2$ is considered. Define
\begin{equation}\label{m2}
0>m_0(a_1,a_2):=\inf_{\vec{v}\in S(a_1,a_2)}E_0(\vec{v})>-\infty.
\end{equation}

We have the following Lemmas.
\begin{Lem}\label{lem4.1}
Let $N\leq 3$. For any $a_1,a_2>0$
\[
m_0(a_1,a_2)=m_{0,r}(a_1,a_2):=\inf_{ \vec u\in S(a_1,a_2)\cap H^1_r(\R^N,\R^3)}E(\vec u).
\] In addition, $m_0(a_1,a_2)$ is reached by the vector function $(e^{i\theta_1}w_1, e^{i\theta_1}w_2, e^{i(\theta_1+\theta_2)}w_3 )$ where $E(\vec{w})=\inf\limits_{ \vec u\in S(a_1,a_2)\cap H^1_r(\R^N,\R^3)} E(\vec u)$, for some $(\theta_1,\theta_2)\in\R^2$.
\end{Lem}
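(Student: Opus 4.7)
The plan has three movements: Schwarz rearrangement to reduce to real, nonnegative, radially symmetric functions; compactness in the radial subspace to produce a minimizer; and a diamagnetic argument to recover the phases of general complex minimizers.

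For the reduction, given $\vec u \in S(a_1,a_2)$ I set $\vec u^* := (|u_1|^*, |u_2|^*, |u_3|^*)$. Standard inequalities yield $\||u_i|^*\|_2 = \|u_i\|_2$, $\|\nabla |u_i|^*\|_2 \leq \|\nabla u_i\|_2$, and by Riesz-type rearrangement $\int |u_1|^*|u_2|^*|u_3|^* \geq \int|u_1||u_2||u_3| \geq \mathrm{Re}\int u_1 u_2 \overline{u}_3$. Hence $\vec u^* \in S(a_1,a_2) \cap H^1_r(\R^N,\R^3)$ and $E_0(\vec u^*) \leq E_0(\vec u)$, which gives $m_0 = m_{0,r}$.

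For the compactness, I first record that $m_0 \in (-\infty, 0)$. Boundedness from below follows from estimate \eqref{z4} taken with $\alpha = 1$: since $N/4 < 1$ for $N \le 3$, the function $\rho \mapsto \rho^2/2 - C\rho^{N/2}$ is coercive on $[0,\infty)$. Negativity follows by evaluating the fiber $\Psi_{\vec u}(s) := E_0(s\star\vec u) = \tfrac{s^2}{2}T(\vec u) - s^{N/2}V(\vec u)$, with $T(\vec u) := \sum_i\|\nabla u_i\|_2^2$ and $V(\vec u) := \int u_1 u_2 u_3$, on any positive radial $\vec u \in S(a_1,a_2)$ at $s>0$ small, using $N/2 < 2$. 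I then take a radial, nonnegative, radially decreasing minimizing sequence $\{\vec w_n\}$, which is $H^1$-bounded by the same coercivity. Extracting a weak limit $\vec w$ in $H^1_r(\R^N,\R^3)$, Strauss' radial compactness gives strong $L^3$ convergence (as $3 \in (2, 2^*)$ for $N \leq 3$), hence $\int w_{1,n} w_{2,n} w_{3,n} \to \int w_1 w_2 w_3$. Weak lower semi-continuity of the Dirichlet norm yields $E_0(\vec w) \leq m_0 < 0$, which forces $\int w_1 w_2 w_3 > 0$ and each $w_i \not\equiv 0$.

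The main obstacle is ruling out $L^2$-mass loss, i.e., showing $\|w_1\|_2^2 + \|w_3\|_2^2 = a_1^2$ and $\|w_2\|_2^2 + \|w_3\|_2^2 = a_2^2$. I intend to establish strict monotonicity of $(a_1, a_2) \mapsto m_0(a_1, a_2)$ under componentwise enlargement. The fiber-minimization formula $\min_{s>0}\Psi_{\vec u}(s) = -c_N V(\vec u)^{4/(4-N)} T(\vec u)^{-N/(4-N)}$, obtained by explicit computation, rewrites $m_0(a_1,a_2) = -c_N\sup_{\vec u \in S(a_1,a_2),\,V(\vec u)>0} V(\vec u)^{4/(4-N)} T(\vec u)^{-N/(4-N)}$. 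Given a near-maximizer at masses $(a_1', a_2')$ with $a_i' \leq a_i$, I enlarge the masses by componentwise scaling $(u_1, u_2, u_3) \mapsto (\theta_1 u_1, \theta_2 u_2, u_3)$ with $\theta_i \geq 1$, then re-optimize along the $L^2$-preserving fiber; the homogeneity gap between $T$ (quadratic) and $V$ (cubic) yields a strict gain in the supremum, hence $m_0(a_1, a_2) < m_0(a_1', a_2')$ whenever the pair is strictly enlarged. Strict monotonicity then excludes mass loss: if $\vec w \in S(a_1', a_2')$ with $a_i' \leq a_i$ and at least one strict inequality, then $E_0(\vec w) \geq m_0(a_1', a_2') > m_0(a_1, a_2) \geq E_0(\vec w)$, a contradiction. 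Therefore $\vec w \in S(a_1,a_2)$ attains $m_0$.

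For a general complex minimizer $\vec v$, the rearrangement chain must be saturated: the equality $\|\nabla|v_i|\|_2 = \|\nabla v_i\|_2$ combined with the pointwise diamagnetic inequality forces $v_i = e^{i\theta_i} |v_i|$ with $\theta_i$ locally constant, hence constant on the (connected) positivity set of $|v_i|$, which exhausts $\R^N$ by the elliptic system \eqref{n1} and the maximum principle. The equality $\mathrm{Re}\int v_1 v_2 \overline{v}_3 = \int |v_1||v_2||v_3| > 0$ then forces $\theta_3 \equiv \theta_1 + \theta_2 \pmod{2\pi}$. Setting $\vec w := (|v_1|, |v_2|, |v_3|)$ (or its Schwarz rearrangement, which is itself a minimizer) yields the stated structural decomposition.
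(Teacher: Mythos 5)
Your rearrangement step for $m_0(a_1,a_2)=m_{0,r}(a_1,a_2)$ and your diamagnetic/phase-recovery argument for the structure $(e^{i\theta_1}w_1,e^{i\theta_2}w_2,e^{i(\theta_1+\theta_2)}w_3)$ of general complex minimizers are both correct and coincide with the paper's treatment (the latter is exactly the argument of Lemma \ref{lem2.5}, which the paper invokes here; attainment itself is deferred to Lemma \ref{lem4.2}). The genuine problem is in your attainment step, specifically in how you exclude loss of mass. You reduce everything to the strict componentwise monotonicity $m_0(a_1,a_2)<m_0(a_1',a_2')$ for $a_i'\le a_i$ with one strict inequality, and you propose to prove it by the deformation $(u_1,u_2,u_3)\mapsto(\theta_1u_1,\theta_2u_2,u_3)$, $\theta_i\ge1$, followed by fiber re-optimization, asserting that the quadratic/cubic homogeneity gap forces a strict gain in $\sup V^{4/(4-N)}T^{-N/(4-N)}$. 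This is not justified: writing $T=T_1+T_2+T_3$ with $T_i=\|\nabla u_i\|_2^2$, one computes
\[
\partial_{\theta_1}\log\Bigl(V(\theta_1u_1,u_2,u_3)^{\frac{4}{4-N}}\,T(\theta_1u_1,u_2,u_3)^{-\frac{N}{4-N}}\Bigr)\Big|_{\theta_1=1}=\frac{4}{4-N}-\frac{2N}{4-N}\,\frac{T_1}{T},
\]
which is negative as soon as $T_1/T>2/N$ (for $N=3$, as soon as $u_1$ carries more than two thirds of the kinetic energy). So your deformation can strictly decrease the quotient, and nothing in the argument rules out that near-maximizers at the deficient masses are of exactly this unbalanced type. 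The monotonicity may well be true, but the sketched proof does not establish it.

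The paper sidesteps this entirely: in Lemma \ref{lem4.2} the weak limit $\vec v$ of the Palais--Smale sequence is shown to solve \eqref{y7} with Lagrange multipliers $\omega_1,\omega_2>0$, and the identity \eqref{y8}, obtained by combining the Pohozaev identity $P_0(\vec v)=0$ with the equation tested against $\vec v$ and with the limit of the analogous computation along the sequence, yields $\omega_1 a_1^2+\omega_2a_2^2=\omega_1(\|v_1\|_2^2+\|v_3\|_2^2)+\omega_2(\|v_2\|_2^2+\|v_3\|_2^2)$; since each bracket is bounded by the corresponding $a_i^2$ and $\omega_i>0$, both constraints are saturated and no mass is lost. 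To repair your scheme you should either prove the strict monotonicity by a genuine argument or adopt this multiplier/Pohozaev mechanism. A minor further point: for $N=1$ the embedding $H^1_r\hookrightarrow L^3$ is not compact, so the strong $L^3$ convergence must be extracted from the monotonicity of the radially decreasing sequence rather than from Strauss' lemma, as the paper notes via \cite[Proposition 1.7.1]{CT}.
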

\begin{proof}
It is easy to see that $m_0(a_1,a_2)\le m_{0,r}(a_1,a_2)$. Since $\|\nabla u_{i}\|^2_2\geq \|\nabla |u_{i}|^*\|^2_2$ it is also straightforward that  for any $\vec{u}\in S(a_1,a_2)$ one has $
E_0(\vec{u})\geq E_0(|u_1|^*,|u_2|^*,|u_3|^*)\geq m_{0,r}(a_1,a_2)$.
Therefore, $m_0(a_1,a_2)\ge m_{0,r}(a_1,a_2)$. Arguing as in the proof of Lemma \ref{lem2.5}, we obtain that  $m_0(a_1,a_2)$ is reached by the vector function $(e^{i\theta_1}w_1, e^{i\theta_1}w_2, e^{i(\theta_1+\theta_2)}w_3 )$ where $E_0(\vec{w})=\inf\limits_{\vec v\in S(a_1,a_2)\cap H^1_r(\R^N,\R^3)} E_0(\vec v)$ and $(\theta_1,\theta_2)\in\R^2$.
\end{proof}

\begin{Lem}\label{lem4.2}
Let $N\leq 3$. For any $a_1,a_2>0$,  $m_0(a_1,a_2)$ is reached by a real-valued, positive, radially symmetric, and decreasing function.
\end{Lem}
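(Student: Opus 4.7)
The plan is to produce a minimizer by the direct method, reducing first to a clean symmetric subclass and then handling the identification of the masses as the key step.

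First, by Lemma~\ref{lem4.1} it is enough to minimize $E_0$ on $S(a_1,a_2)\cap H^1_r(\R^N,\R^3)$, and via Schwarz symmetric decreasing rearrangement I may take a minimizing sequence $\{\vec u_n\}$ consisting of positive, radial, non-increasing functions (rearrangement preserves $L^2$ norms, cannot increase gradient norms, and cannot decrease $\int u_1 u_2 u_3$). Since $N\le 3$, the Gagliardo--Nirenberg inequality yields
\[
\Big|\int u_{1,n}u_{2,n}u_{3,n}\Big|\le C(a_1,a_2)\Big(\sum_{i=1}^3\|\nabla u_{i,n}\|_2^2\Big)^{N/4},
\]
with $N/4<1$, so $E_0$ is coercive on $S(a_1,a_2)$ and $\{\vec u_n\}$ is bounded in $H^1$. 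Up to a subsequence, $\vec u_n\rightharpoonup \vec u$ weakly in $H^1$ and pointwise a.e.; the Strauss radial compactness $H^1_r\hookrightarrow L^3$ for $N\ge 2$, or (for $N=1$) the pointwise bound $u(x)\le \|u\|_2/\sqrt{2|x|}$ for even decreasing functions combined with local Rellich, gives $u_{i,n}\to u_i$ strongly in $L^3$. Thus $\int u_{1,n}u_{2,n}u_{3,n}\to \int u_1u_2u_3$ and weak lsc of the gradient yields $E_0(\vec u)\le m_0(a_1,a_2)<0$. In particular $\int u_1u_2u_3>0$, each $u_i\not\equiv 0$, and weak $L^2$-lsc gives $Q_i(\vec u)\le a_i^2$.

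The key step---and, I expect, the main obstacle---is to promote these mass inequalities to equalities, i.e.\ to prove $\vec u\in S(a_1,a_2)$. Suppose the contrary and, from the one-parameter family of non-negative solutions $(\mu_1,\mu_2,\mu_3)\ne(1,1,1)$ to
\[
\mu_1\|u_1\|_2^2+\mu_3\|u_3\|_2^2=a_1^2,\qquad \mu_2\|u_2\|_2^2+\mu_3\|u_3\|_2^2=a_2^2,
\]
set $\vec w:=(\sqrt{\mu_1}u_1,\sqrt{\mu_2}u_2,\sqrt{\mu_3}u_3)\in S(a_1,a_2)$. The $L^2$-preserving dilation $s\star\vec w$ remains in $S(a_1,a_2)$ and the fiber
\[
E_0(s\star\vec w)=\tfrac{s^2}{2}\sum_i\mu_i\|\nabla u_i\|_2^2-s^{N/2}\sqrt{\mu_1\mu_2\mu_3}\int u_1u_2u_3
\]
attains its minimum at a unique $s^*>0$, giving
\[
\min_{s>0}E_0(s\star\vec w)=-C_N\,\frac{(\mu_1\mu_2\mu_3)^{2/(4-N)}\bigl(\int u_1u_2u_3\bigr)^{4/(4-N)}}{\bigl(\sum_i\mu_i\|\nabla u_i\|_2^2\bigr)^{N/(4-N)}}.
\]
Comparing with the analogous bound $E_0(\vec u)\ge \min_{s>0}E_0(s\star\vec u)$, producing a competitor of strictly smaller energy reduces to the pointwise inequality $(\mu_1\mu_2\mu_3)^{2/N}>\sum_i p_i\mu_i$ with $p_i:=\|\nabla u_i\|_2^2/\sum_j\|\nabla u_j\|_2^2$. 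Since $\sum_i(2/N-p_i)=6/N-1>0$ for $N\le 3$, the linear form $\mu\mapsto\sum_i(2/N-p_i)(\mu_i-1)$ is not identically zero on the tangent line to the admissible curve at $(1,1,1)$; a short case analysis (exploiting the freedom to vary $\mu_3$, and if needed permuting the roles of the components) then yields admissible $(\mu_1,\mu_2,\mu_3)$ realising the strict inequality, contradicting the definition of $m_0(a_1,a_2)$.

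The delicate point of the whole plan is precisely this last one: ensuring the strict inequality $(\mu_1\mu_2\mu_3)^{2/N}>\sum_i p_i\mu_i$ can always be realised along the \emph{one-dimensional} admissible curve, especially in the critical dimension $N=3$ where some $p_i$ can exceed $2/N=2/3$ and naive scalings fail. Once $\vec u\in S(a_1,a_2)$ is in hand, the Brezis--Lieb lemma upgrades the weak convergence to strong convergence in $H^1$, and the limit $\vec u$ is the desired minimizer: real and nonnegative as a pointwise a.e.\ limit of positive rearrangements, radially symmetric and non-increasing, and strictly positive by the strong maximum principle applied to the Euler--Lagrange system $-\Delta u_1+\lambda_1 u_1=u_2u_3$, $-\Delta u_2+\lambda_2 u_2=u_1u_3$, $-\Delta u_3+(\lambda_1+\lambda_2)u_3=u_1u_2$.
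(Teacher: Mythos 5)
The first half of your plan (rearrangement, coercivity via Gagliardo--Nirenberg with exponent $N/4<1$, radial compactness giving strong $L^3$ convergence of the interaction term, weak lower semicontinuity) matches the paper's proof. The genuine gap is in the step you yourself flag as delicate: recovering the equalities $Q_i(\vec u)=a_i^2$ by the amplitude rescaling $\vec w=(\sqrt{\mu_1}u_1,\sqrt{\mu_2}u_2,\sqrt{\mu_3}u_3)$. The inequality $(\mu_1\mu_2\mu_3)^{2/N}>\sum_i p_i\mu_i$ can fail at \emph{every} point of the admissible line, so no case analysis can produce the required competitor. Concretely, take $N=3$, $\|u_1\|_2^2=\|u_2\|_2^2=\|u_3\|_2^2=1$, $a_2^2=2$ (second constraint tight), $a_1^2=K$ large, and $p_1$ close to $1$: the admissible line forces $\mu_1\geq K-2$, while $\mu_2\mu_3\leq(a_2^2)^2/4=1$, so $(\mu_1\mu_2\mu_3)^{2/3}\lesssim K^{2/3}\ll K\lesssim\sum_ip_i\mu_i$ throughout. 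At this stage of your argument $\vec u$ is only a weak limit with $E_0(\vec u)\leq m_0<0$ and $Q_i(\vec u)\leq a_i^2$, and nothing you have established excludes such a configuration; your first-order expansion of $F$ at $(1,1,1)$ is also vacuous here, since $(1,1,1)$ does not lie on the admissible line precisely in the case you are trying to rule out, and the line need not pass near it when a mass defect is large.

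The paper closes this step by a different mechanism: via Ekeland's variational principle it replaces the minimizing sequence by a Palais--Smale sequence, extracts bounded Lagrange multipliers $\omega_{1,n}\to\omega_1$, $\omega_{2,n}\to\omega_2$, passes to the limit in the Euler--Lagrange system, proves $\omega_1,\omega_2>0$, and combines the Pohozaev identity $\sum_i\|\nabla v_i\|_2^2=\frac N2\int v_1v_2v_3$ with the Nehari-type identity to obtain
\[
\omega_1a_1^2+\omega_2a_2^2=\Big(3-\tfrac N2\Big)\int v_1v_2v_3=\omega_1\big(\|v_1\|_2^2+\|v_3\|_2^2\big)+\omega_2\big(\|v_2\|_2^2+\|v_3\|_2^2\big),
\]
which, together with $Q_i(\vec v)\leq a_i^2$ and the strict positivity of the multipliers, forces equality in both mass constraints. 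If you want to keep a purely variational route, you would instead need to prove strict monotonicity of $m_0(a_1,a_2)$ in each variable, which again requires more than the amplitude-plus-dilation competitor you construct. Your final paragraph also invokes the Euler--Lagrange system for the maximum principle, which is only available after the constrained criticality of $\vec u$ has been established, i.e.\ after the step that is missing.
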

\begin{proof}
The proof follows the same lines on that of Lemma \ref{lem2.6}, arriving in this case to a solution $\vec v$ to 
\begin{equation}\label{y7}
\begin{cases}
-\Delta v_{1}+\omega_1v_{1}=v_{3}v_{2},\\
-\Delta v_{2}+\omega_2v_{2}=v_{3}v_{1}, \\
-\Delta v_{3}+(\omega_1+\omega_2)v_{3}=v_{1}v_{2}.
\end{cases}
\end{equation}
The Pohozaev-type identity for solutions of \eqref{y7} is given by
\[
P_0(\vec{v}):=\sum^3_{i=1}\|\nabla v_i\|^2_2-\frac{N}{2}\int v_1v_2v_3=0,
\]
then we have
\begin{equation}\label{y8}
\omega_1a^2_1+\omega_2a^2_2=\left(3-\frac{N}{2}\right)\int v_1v_2v_3
=\omega_1\left(\|v_1\|^2_2+\|v_3\|^2_2\right)+\omega_2\left(\|v_2\|^2_2+\|v_3\|^2_2\right).
\end{equation}

\noindent It remains to show that $v_1\not\equiv0$, $v_2\not\equiv0$ and $v_3\not\equiv0$.\vskip1mm
As in Lemma \ref{lem2.6}, we can separate the analysis in three cases and the proof is similar except for second case. In this case, either $-\Delta v_i +\omega_iv_i=0$ if $\omega_i>0$ or $-\Delta v_i \ge 0$ if $\omega_i\le 0$ (cf. \cite[Lemma A.2]{IN}), we obtain a contradiction to the assumption.

\vskip1mm
By the same argument as in the proof of Lemma \ref{lem2.6}, we have $\omega_1,\omega_2>0$. Then, by the strong maximal principle, $\vec{v}$ is a positive solution of \eqref{n1}.
It follows from \eqref{y8} that $\vec{v}\in S(a_1,a_2)$. Hence, $E_0(\vec{v})=m_0(a_1,a_2)$.
\end{proof}

\begin{Rem}\label{rem:KO-sol}\rm A straightforward modification of the proof of Lemma \ref{lem4.2} solves a problem left open in \cite{KO} in the case $N=3$. Indeed, instead of considering  the minimization  problem \eqref{m2}, we consider as in \cite{KO} the problem
\[
\Sigma_0(\gamma,\mu,s):=\inf\left\{E_0(\vec{u}) \quad  \hbox{s.t.} \quad \vec{u}\in H^1(\R^N,\mathbb{C}^3), \|u_1\|^2_2=\gamma, \|u_2\|^2_2=\mu, \|u_3\|^2_2 = \nu\right\},
\]
and a similar analysis as the one in Lemma \ref{lem4.2} gives a positive answer to \cite[Theorem 1.3 (ii)]{KO} in the three-dimensional case. In addition, if $2_*<p<2^*$, under  scaling transformation, $\alpha^{-\frac{N}{4-N}}\vec{u}\left(\alpha^{-\frac{2}{4-N}}x\right)\to \vec{v}$ in $H^1(\R^N,\mathbb{C}^3)$ as $\alpha \to 0$, where $\vec{u}\in \mathcal G$ and $\vec{v}$ is a ground state of \eqref{n1} on $S(a_1,a_2)$ (see Proposition \ref{lem4.3}).
\end{Rem}

In the following, we derive an improved upper bound of $m^+_r(a_1,a_2)$ when $a_1=a_2$. Indeed, we show in Lemma \ref{lem3.5} below, that $m^+_r(a_1,a_1)$ is not only negative, but bounded away from zero. Compare \eqref{m1} and \eqref{eq:m1bis}. We consider the problem
\begin{equation}\label{g1}
\begin{cases}
-\Delta u +\lambda u =\alpha u^2,\\
\displaystyle\int |u|^2=a^2,
\end{cases}
\end{equation}
where $\alpha,a>0$. Define
\begin{equation*}\label{def:J0}
J_0(u)=\frac{1}{2}\| \nabla u\|^2_{2}-\frac{\alpha}{3}\|u\|^{3}_{3},
\end{equation*}
then solutions $u$ of \eqref{g1} can be found, see \cite{Soave1}, as minimizers of
\begin{equation*}\label{e1}
0>m_0(a):=\inf_{u\in S(a)} J_0(u)>-\infty,
\end{equation*}
where $\lambda$ is a Lagrange multiplier, and $S(a):=\left\{u\in H^1(\R^N,\R) \quad \hbox{s.t.} \quad \|u\|^2_2=a^2\right\}$. From \cite{BJS}, we obtain that \eqref{g1} has a unique positive solution $(\lambda,u_{\alpha})$ given by
\begin{equation}\label{eq1.3}
\begin{aligned}
\lambda=\left(\frac{\alpha^2a^2}{\|w\|^2_2} \right)^{\frac{2}{4-N}},\quad  u_{\alpha}=\frac{\lambda}{\alpha}w(\lambda^{\frac{1}{2}}x), \quad -\Delta w+w-w^2=0,
\end{aligned}
\end{equation}
and we recall that $w$ is unique and positive.
We have
\begin{equation*}
m_0(a)=-\frac{4-N}{2(6-N)}\left(\frac{\alpha^2}{\|w\|^2_2}\right)^{\frac{2}{4-N}}a^{\frac{2(6-N)}{4-N}}<0.
\end{equation*}

\begin{Lem}\label{lem3.5}
Let $N\le 3$,  $2_*<p < 2^*$, and $\alpha,a_1, a_2>0$. If $a_1=a_2<D$, then
\begin{equation}\label{eq:m1bis}
m^+(a_1,a_1)< 3m_0\left(\frac{a_1}{\sqrt{2}}\right)<0.
\end{equation}
\end{Lem}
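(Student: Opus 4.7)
The plan is to exhibit an explicit test vector in $\mathcal{P}^+_{a_1,a_1}\cap\mathcal{M}$ whose energy strictly undercuts $3 m_0(a_1/\sqrt{2})$. Motivated by the symmetry $a_1=a_2$ and by formula \eqref{eq1.3}, set $\vec{u}_0:=(u,u,u)$ where $u=u_{\alpha}$ is the unique positive radial minimizer of $m_0(a_1/\sqrt{2})$, i.e.\ the solution of $-\Delta u+\lambda u=\alpha u^2$ with $\|u\|_2=a_1/\sqrt{2}$. Since $\int u\cdot u\cdot\bar u=\|u\|_3^3>0$ and $\|u\|_2^2+\|u\|_2^2=a_1^2$, we have $\vec{u}_0\in S(a_1,a_1)\cap\mathcal{M}$, and a direct substitution, recalling the definition \eqref{def:J0} of $J_0$, gives
\[
E(\vec{u}_0)=3\left(\frac{1}{2}\|\nabla u\|_2^2-\frac{\alpha}{3}\|u\|_3^3\right)-\frac{3}{p}\|u\|_p^p=3 J_0(u)-\frac{3}{p}\|u\|_p^p=3 m_0(a_1/\sqrt{2})-\frac{3}{p}\|u\|_p^p.
\]
In particular $E(\vec{u}_0)<3 m_0(a_1/\sqrt{2})$ since $\|u\|_p>0$.

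Next, I promote $\vec{u}_0$ to a point of $\mathcal{P}^+_{a_1,a_1}$ without increasing the energy. By Lemma \ref{lem2.3} there is a unique $s_{\vec{u}_0}>0$ with $s_{\vec{u}_0}\star\vec{u}_0\in\mathcal{P}^+_{a_1,a_1}\cap\mathcal{M}$, and Lemma \ref{lem2.4} yields $m^+(a_1,a_1)\le E(s_{\vec{u}_0}\star\vec{u}_0)=\Psi_{\vec{u}_0}(s_{\vec{u}_0})$. It therefore suffices to prove $\Psi_{\vec{u}_0}(s_{\vec{u}_0})\le\Psi_{\vec{u}_0}(1)=E(\vec{u}_0)$. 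Since by Lemma \ref{lem2.3} the only critical points of $\Psi_{\vec{u}_0}$ on $(0,\infty)$ are $s_{\vec{u}_0}<\sigma_{\vec{u}_0}$, and $\Psi_{\vec{u}_0}(s)\to 0^-$ as $s\to 0^+$, the function $\Psi_{\vec{u}_0}$ is strictly decreasing on $(0,s_{\vec{u}_0})$; hence the task reduces to verifying $1<s_{\vec{u}_0}$.

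The key input is the Pohozaev identity for the scalar ground state $u$: testing $-\Delta u+\lambda u=\alpha u^2$ against $u$ and against $x\cdot\nabla u$ yields $\|\nabla u\|_2^2=\frac{N\alpha}{6}\|u\|_3^3$. Substituting into $\Psi'_{\vec{u}_0}(1)=P(\vec{u}_0)$ gives
\[
\Psi'_{\vec{u}_0}(1)=3\|\nabla u\|_2^2-3\gamma_p\|u\|_p^p-\frac{N\alpha}{2}\|u\|_3^3=-3\gamma_p\|u\|_p^p<0,
\]
so $1$ is not a critical point. Imposing $\Psi'_{\vec{u}_0}(s_{\vec{u}_0})=0$ and applying the same Pohozaev identity for $u$, one finds
\[
\frac{N\alpha}{2}\|u\|_3^3\left(s_{\vec{u}_0}-s_{\vec{u}_0}^{N/2-1}\right)=3\gamma_p\,s_{\vec{u}_0}^{p\gamma_p-1}\|u\|_p^p>0,
\]
which forces $s_{\vec{u}_0}^{2-N/2}>1$, i.e.\ $s_{\vec{u}_0}>1$, because $2-N/2>0$ for $N\le 3$. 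Combined with $s_{\vec{u}_0}<\sigma_{\vec{u}_0}$ this gives $1\in(0,s_{\vec{u}_0})$ and therefore
\[
m^+(a_1,a_1)\le\Psi_{\vec{u}_0}(s_{\vec{u}_0})<\Psi_{\vec{u}_0}(1)=3 m_0(a_1/\sqrt{2})-\frac{3}{p}\|u\|_p^p<3 m_0(a_1/\sqrt{2}),
\]
as required. The main hurdle is the verification $s_{\vec{u}_0}>1$: without the cancellation afforded by the mass-critical Pohozaev identity for $u$, the point $\vec{u}_0$ could lie beyond $\sigma_{\vec{u}_0}$ on its own fiber and the comparison $\Psi_{\vec{u}_0}(s_{\vec{u}_0})\le\Psi_{\vec{u}_0}(1)$ would no longer be automatic.
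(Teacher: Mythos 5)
Your proof is correct, and it uses the same test vector as the paper --- the triple $(u,u,u)$ built from the minimizer $u$ of $m_0(a_1/\sqrt2)$, together with the identity $E(u,u,u)=3J_0(u)-\tfrac3p\|u\|_p^p=3m_0(a_1/\sqrt2)-\tfrac3p\|u\|_p^p$ --- but you justify the crucial intermediate inequality $m^+(a_1,a_1)\le E(u,u,u)$ by a different mechanism. The paper shows directly that $(u,u,u)$ lies in the local minimization region $V(a_1,a_1)=S(a_1,a_1)\cap B_{\rho^*}\cap\mathcal M$: it bounds $3\|\nabla u\|_2^2$ by the first zero $\hat\rho$ of the auxiliary function $h_1(\rho)=\tfrac{\rho^2}{2}-\tfrac{\alpha}{3}C^3(N,p)a_1^{(6-N)/2}\rho^{N/2}$ and checks $\hat\rho<R_0<\rho^*$, then invokes $m^+(a_1,a_1)=\inf_{V(a_1,a_1)}E$ from Lemma \ref{lem2.4}. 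You instead project $(u,u,u)$ onto $\mathcal{P}^+_{a_1,a_1}$ along its fiber and prove that the projection parameter satisfies $s_{(u,u,u)}>1$, using the scalar virial identity $\|\nabla u\|_2^2=\tfrac{N\alpha}{6}\|u\|_3^3$ to show that \emph{every} critical point of $\Psi_{(u,u,u)}$ exceeds $1$; the strict monotonicity of $\Psi_{(u,u,u)}$ on $(0,s_{(u,u,u)})$ then gives $\Psi_{(u,u,u)}(s_{(u,u,u)})<\Psi_{(u,u,u)}(1)=E(u,u,u)$, and Lemma \ref{lem2.4} closes the argument. Both routes are legitimate: the paper's yields quantitative control on where the test function sits inside the gradient ball (information it reuses, e.g. in the proof of Lemma \ref{lem3.6}), while yours stays entirely on the fiber and avoids any estimate involving $\rho^*$, $R_0$ or the Gagliardo-Nirenberg constants, at the price of invoking the Pohozaev identity for the limiting scalar problem \eqref{g1}. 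The only point you leave implicit is the second inequality $3m_0(a_1/\sqrt2)<0$, which is however already recorded in \eqref{e1}.
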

\begin{proof}
$m_0\left(\frac{a_1}{\sqrt{2}}\right)$ is achieved by $\tilde{u}_1\in S\left(\frac{a_1}{\sqrt{2}}\right)$ and $\tilde{u}_1$ is radially symmetric and decreasing, see \cite{CaLi}.
By adopting the same notation as in Lemma \ref{lem2.11}, we have
\begin{equation}\label{x1}
h(\rho)<h_1(\rho):=\frac{1}{2}\rho^{2}-\frac{\alpha}{3} C^3(N,p) a_1^{\frac{6-N}{2}}\rho^{\frac{N}{2}},
\end{equation}
where, by H\"older inequality, we have  that
\begin{equation*}
\begin{aligned}
\alpha \mathrm{Re}\int u_1u_2\overline{u}_3\le \alpha \|u_1\|_3\|u_2\|_3\|u_2\|_3\le \frac{\alpha}{3}\sum^{3}_{i=1}\|u_i\|^3_3.
\end{aligned}
\end{equation*}
Computations similar to those in \eqref{b3} give $J_0(\vec{u})\ge h_1\left(\left(\sum^{3}\limits_{i=1}\|\nabla u_i\|^2_2\right)^{\frac{1}{2}}\right)$.
By direct calculations, there exists $0<\hat{\rho}<R_0$ such that $h_1(\hat{\rho})=0$.
Then, we have
\begin{equation*}
3\|\nabla \tilde{u}_1\|^2_2\le \hat{\rho}^2<R^2_0< (\rho^*)^2.
\end{equation*}
Since $h(R_0)=h(R_1)=0$, by the monotonicity of $h(\rho)$, we deduce that $(\tilde{u}_1,\tilde{u}_1,\tilde{u}_1)\in V(a_1,a_1)$.
It implies that
\begin{equation*}
\begin{aligned}
m^+(a_1,a_1)=\inf_{\vec{u} \in V(a_1,a_1)} E(\vec{u})\le E(\tilde{u}_1,\tilde{u}_1,\tilde{u}_1)= 3J_0(\tilde{u}_1)-\frac{3}{p}\|\tilde{u}_1\|^p_p<3m_0\left(\frac{a_1}{\sqrt{2}}\right).
\end{aligned}
\end{equation*}
Hence, the proof is complete.
\end{proof}

\begin{Lem}\label{lem3.6}
Let $N\le 3$,  $2_*<p < 2^*$, and $\alpha,a_1, a_2>0$. If $a_1=a_2<D$, then for any ground state $\vec{u}\in S(a_1,a_1)$ of \eqref{eqA0.2},
for $a_1\to 0$ we have, up to a subsequence,
\begin{equation*}
\left(\alpha\kappa^{-1}u_1(\kappa^{-\frac{1}{2}}x), \alpha\kappa^{-1}u_2(\kappa^{-\frac{1}{2}}x),\alpha\kappa^{-1}u_3(\kappa^{-\frac{1}{2}}x)\right)\to \vec{v}_0
 \ \  \text{in}\ H^{1}(\R^N,\mathbb{C}^3),
\end{equation*}
where $\vec{v}_0$ is a ground state solution of $E_0$ constrained on $S(\sqrt{2}\|w\|_2,\sqrt{2}\|w\|_2)$, $w$ is defined in \eqref{eq1.3}, and $\kappa=\left(\frac{\alpha a_1}{\sqrt{2}\|w\|_2} \right)^{\frac{4}{4-N}}$.
\end{Lem}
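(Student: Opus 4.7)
The plan is to introduce the $L^2$-preserving rescaling
\[
\vec v^{(n)}(x) := \alpha\kappa_n^{-1}\vec u^{(n)}(\kappa_n^{-1/2}x), \qquad \kappa_n := \left(\frac{\alpha\, a_1^{(n)}}{\sqrt 2\,\|w\|_2}\right)^{\frac{4}{4-N}},
\]
where $\vec u^{(n)}\in\mathcal G$ is a ground state on $S(a_1^{(n)},a_1^{(n)})$ with $a_1^{(n)}\to 0^+$, and to prove that, up to subsequences, $\vec v^{(n)}\to\vec v_0$ strongly in $H^1(\R^N,\mathbb{C}^3)$ with $\vec v_0$ a minimizer of $E_0$ on $S(\sqrt 2\,\|w\|_2,\sqrt 2\,\|w\|_2)$. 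The exponent in $\kappa_n$ is exactly what forces $\vec v^{(n)}\in S(\sqrt 2\,\|w\|_2,\sqrt 2\,\|w\|_2)$, and by Lemma~\ref{lem2.6} each $\vec u^{(n)}$, hence each $\vec v^{(n)}$, may be taken positive, radial, and decreasing.

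A direct change of variables yields the two identities
\begin{align*}
E_0(\vec v^{(n)}) &= \alpha^2\kappa_n^{N/2-3}\, m^+(a_1^{(n)},a_1^{(n)}) + \frac{\alpha^{2-p}}{p}\kappa_n^{p-3}\sum_{i=1}^3\|v_i^{(n)}\|_p^p,\\
\sum_{i=1}^3\|\nabla v_i^{(n)}\|_2^2 &= \gamma_p\alpha^{2-p}\kappa_n^{p-3}\sum_{i=1}^3\|v_i^{(n)}\|_p^p + \tfrac{N}{2}\int v_1^{(n)}v_2^{(n)}v_3^{(n)},
\end{align*}
the second being the rescaled Pohozaev relation from Lemma~\ref{lem2.1}. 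Since $p>2_*$ forces $p>3$ when $N\leq 3$, the prefactor $\kappa_n^{p-3}\to 0$, so the power nonlinearity is a subleading perturbation of the pure three-wave interaction. Together with Lemma~\ref{lem3.5} and the closed-form expression for $m_0(a)$ coming from \eqref{eq1.3}, the first identity produces the uniform upper bound
\[
\alpha^2\kappa_n^{N/2-3}\,m^+(a_1^{(n)},a_1^{(n)}) \;\leq\; \alpha^2\kappa_n^{N/2-3}\cdot 3\,m_0(a_1^{(n)}/\sqrt 2) \;=\; -\frac{3(4-N)\,\|w\|_2^2}{2(6-N)} \;=:\; M_0 \;<\; 0.
\]

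The main technical step is $H^1$-boundedness of $\vec v^{(n)}$. Setting $X_n:=\sum_i\|\nabla v_i^{(n)}\|_2^2$, Gagliardo--Nirenberg applied to the rescaled Pohozaev yields $X_n\leq C_1\kappa_n^{p-3}X_n^{p\gamma_p/2}+C_2 X_n^{N/4}$, whose solution set confines $X_n$ either to a uniformly bounded small regime or to a large regime $X_n\gtrsim \kappa_n^{-(p-3)/(p\gamma_p/2-1)}\to\infty$. I would rule out the large regime as follows: there the Pohozaev identity forces $\gamma_p\alpha^{2-p}\kappa_n^{p-3}\sum\|v_i^{(n)}\|_p^p\sim X_n$ and $\int v_1^{(n)}v_2^{(n)}v_3^{(n)}=O(X_n^{N/4})=o(X_n)$, so that $E_0(\vec v^{(n)})=\tfrac12 X_n-\int v_1^{(n)}v_2^{(n)}v_3^{(n)}\sim X_n/2$ while the correction satisfies $\tfrac{\alpha^{2-p}}{p}\kappa_n^{p-3}\sum\|v_i^{(n)}\|_p^p\sim X_n/(p\gamma_p)$, whence
\[
\alpha^2\kappa_n^{N/2-3}\,m^+(a_1^{(n)},a_1^{(n)}) \;\sim\; \frac{p\gamma_p-2}{2p\gamma_p}\,X_n \;\longrightarrow\; +\infty,
\]
contradicting $M_0<0$. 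Hence $X_n$ is bounded and the $L^p$-correction in the first identity is $o(1)$.

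With boundedness in hand, a matching upper bound $\limsup_n E_0(\vec v^{(n)})\leq m_0(\sqrt 2\,\|w\|_2,\sqrt 2\,\|w\|_2)$ follows by reverse-scaling a minimizer $\vec V_0$ of $m_0(\sqrt 2\,\|w\|_2,\sqrt 2\,\|w\|_2)$ (which exists by Lemma~\ref{lem4.2}) to a test function on $S(a_1^{(n)},a_1^{(n)})$ with vanishing Dirichlet energy, readily inserted into $V(a_1^{(n)},a_1^{(n)})$ via the fiber structure of Lemma~\ref{lem2.3}; combined with the trivial lower bound from the mass constraint this gives $E_0(\vec v^{(n)})\to m_0(\sqrt 2\,\|w\|_2,\sqrt 2\,\|w\|_2)$ and turns $\vec v^{(n)}$ into a radial minimizing sequence. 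The proof then concludes by the scheme of Lemma~\ref{lem4.2}: weak extraction in $H^1_r(\R^N,\R^3)$ to a limit $\vec v_0$, use of the compact radial embedding $H^1_r\hookrightarrow L^3(\R^N)$ (with the monotone compactness variant for $N=1$) to pass to the limit in the coupling integral, boundedness and extraction of limits of the rescaled Lagrange multipliers $\omega_i^{(n)}:=\lambda_i^{(n)}/\kappa_n$ via $v_i^{(n)}$-testing of the Euler--Lagrange system, limit passage in the rescaled equations (whose $\alpha^{2-p}\kappa_n^{p-3}|v_i^{(n)}|^{p-2}v_i^{(n)}$ terms drop out) to recover \eqref{n1}, and exclusion of trivial or semi-trivial limits by the Case~1--3 analysis of Lemma~\ref{lem4.2} using $E_0(\vec v_0)\leq m_0<0$. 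The main obstacle is the dichotomy argument for $X_n$, whose large-regime contradiction depends delicately on the scaling-exactness encoded in $\kappa_n$ and the mass-subcritical nature of the coupling ($\int v_1v_2v_3\lesssim X^{N/4}$), absent which the divergent scenario could not be excluded.
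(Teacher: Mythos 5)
Your proposal is correct and follows essentially the same route as the paper: rescale by $\kappa_n$, use the Pohozaev identity $P(\vec u_n)=0$ together with the strict bound $m^+(a_n,a_n)<3m_0(a_n/\sqrt 2)$ from Lemma \ref{lem3.5} to get uniform $H^1$-bounds after rescaling, match upper and lower bounds (reverse-scaling a near-minimizer of $m_0$) to show $\{\vec v_n\}$ is a minimizing sequence for $m_0(\sqrt 2\|w\|_2,\sqrt 2\|w\|_2)$, and conclude by the compactness scheme of Lemma \ref{lem4.2}. The only cosmetic difference is that the paper obtains the gradient bound \eqref{ba1} directly in the unrescaled variables, whereas you run a dichotomy on $X_n$ and exclude the large regime by contradiction; the two arguments rest on the same identities.
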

\begin{proof}
Fix $\alpha>0$. For any $\{a_n\}\subset\R^+$ with $a_n\to 0^+$ as $n\to +\infty$,  let $\vec{u}_n\in V(a_n,a_n)$ be a minimizer of $m^+(a_n,a_n)$, where $V(a_n,a_n)=\left\{ \vec{u}_{n}\in S(a_n,a_n)\cap\mathcal{M} : \left(\sum^{3}_{i=1}\|\nabla u\|^2_{2}\right)^{\frac{1}{2}}< \rho^* \right\}$.
By Lemma \ref{lem2.6}, we get that $\vec{u}_{n}$ is a ground state of $E\vert_{S(a_n,a_n)}$. Then the Lagrange multipliers rule implies the existence of some $\lambda_{1,a_n},\lambda_{2,a_n} \in \R$ such that
\begin{equation}\label{y4}
\begin{cases}
\displaystyle\int\left( \nabla u_{1,n}\nabla\overline{\phi}_1+ \lambda_{1,a_n}u_{1,n}\overline{\phi}_1
-|u_{1,n}|^{p-2}u_{1,n}\overline{\phi}_1\right)=\alpha\mathrm{Re}\int u_{3,n}\overline{u}_{2,n}\overline{\phi}_1,\vspace{0.5ex}\\
\displaystyle\int \left(\nabla u_{2,n}\nabla \overline{\phi}_2+ \lambda_{2,a_n}u_{2,n}\overline{\phi}_2
-|u_{2,n}|^{p-2}u_{2,n}\overline{\phi}_2\right)=\alpha\mathrm{Re}\int u_{3,n}\overline{u}_{1,n}\overline{\phi}_2,\vspace{0.5ex}\\
\displaystyle\int\left( \nabla u_{3,n}\nabla \overline{\phi}_3+ (\lambda_{1,a_n}+\lambda_{2,a_n})u_{3,n}\overline{\phi}_3
-|u_{3,n}|^{p-2}u_{3,n}\overline{\phi}_3\right)=\alpha\mathrm{Re}\int u_{1,n}u_{2,n}\overline{\phi}_3,\\
\end{cases}
\end{equation}
for each $\vec{\phi}\in H^{1}(\R^N,\mathbb{C}^3)$.
\vskip1mm
\noindent We claim that
\begin{equation}\label{Dd2.2}
\frac{1-\gamma_p}{\gamma_p}\left(\frac{\alpha2N(p-3)}{N(p-2)-4}C^{3}(N,p)\right)^{\frac{4}{4-N}}a^{\frac{4}{4-N}}_n>\lambda_{1,a_n}+\lambda_{2,a_n}>6K_{N}
a^{\frac{4}{4-N}}_n,
\end{equation}
where $K_N:=\frac{4-N}{4(6-N)}\left(\frac{\alpha^2}{2\|w\|^2_2}\right)^{\frac{2}{4-N}}$.
Indeed, it follows from \eqref{y4} that
\begin{equation*}
\begin{aligned}
(\lambda_{1,a_n}+\lambda_{2,a_n})a^2_n=-\sum^{3}_{i=1}\|\nabla u_{i,n}\|^2_{2}+\sum^{3}_{i=1}\|u_{i,n}\|^{p}_{p}+3\alpha \mathrm{Re}\int u_{1,n}u_{2,n}\overline{u}_{3,n}> 6K_N a^{\frac{2(6-N)}{4-N}}_n.
\end{aligned}
\end{equation*}

\noindent Since $P(\vec{u}_{n})=0$, by Lemma \ref{lem3.5} we have
\begin{equation}\label{x3}
\begin{aligned}
E(\vec{u}_{n})&=\left(\frac{1}{2}-\frac{1}{p\gamma_p}\right)\sum^3_{i=1}\|\nabla u_{i,n}\|^2_2-\frac{\alpha(p-3)}{p-2}\mathrm{Re}\int u_{1,n}u_{2,n}u_{3,n}\\
&=-\frac{4-N}{2N}\sum^3_{i=1}\|\nabla u_{i,n}\|^2_2+\gamma_p\left(\frac{2}{N}-\frac{1}{p\gamma_p}\right)\sum^3_{i=1}\| u_{i,n}\|^p_p\\
&< -3K_{N}a^{\frac{2(6-N)}{4-N}}_n.
\end{aligned}
\end{equation}
It follows immediately that
\begin{equation}\label{ba1}
\frac{6N}{4-N}K_{N}a^{\frac{2(6-N)}{4-N}}_n< \sum^3_{i=1}\|\nabla u_{i,n}\|^2_2<
\left(\frac{\alpha2N(p-3)C^{3}(N,p)}{N(p-2)-4}\right)^{\frac{4}{4-N}}
a^{\frac{2(6-N)}{4-N}}_n.
\end{equation}
Hence, combining it with $P(\vec{u}_{n})=0$, we obtain that
\begin{equation*}
\begin{aligned}
(\lambda_{1,a_n}+\lambda_{2,a_n})a^2_n &=\left(\frac{1}{\gamma_p}-1\right)\sum^{3}_{i=1}\|\nabla u_{i,n}\|^2_{2}-\left(3-\frac{N}{2\gamma_p}\right)\alpha\mathrm{Re}\int u_{1,n}u_{2,n}u_{3,n}\\
&<\frac{1-\gamma_p}{\gamma_p}\left(\frac{\alpha2N(p-3)C^{3}(N,p)}{N(p-2)-4}\right)^{\frac{4}{4-N}}a^{\frac{2(6-N)}{4-N}}_n.
\end{aligned}
\end{equation*}
The proof of  \eqref{Dd2.2} is complete.

\noindent Define now
\begin{equation}\label{def:v-i}
v_{1,n}:=\alpha\kappa_n^{-1}u_{1,n}(\kappa_n^{-\frac{1}{2}}x), \  \
v_{2,n}:=\alpha\kappa_n^{-1}u_{2,n}(\kappa_n^{-\frac{1}{2}}x),\quad \text{and} \quad
v_{3,n}:=\alpha\kappa_n^{-1}u_{3,n}(\kappa_n^{-\frac{1}{2}}x),
\end{equation}
where $\kappa_n=\left(\frac{\alpha a_n}{\sqrt{2}\|w\|_2} \right)^{\frac{4}{4-N}}$. Then, for $i=1,2,3$,
\begin{equation*}
\|\nabla v_{i,n}\|^2_2=\kappa_n^{\frac{N-6}{2}}\alpha^{2}\|\nabla u_{i,n}\|^2_2, \quad \|v_{i,n}\|^p_p=\kappa_n^{\frac{N-2p}{2}}\alpha^{p}\|u_{i,n}\|^p_p, \quad \|v_{i,n}\|^2_2=\frac{2\|w\|^2_2}{a^2_n}\|u_{i,n}\|^2_2.
\end{equation*}
Therefore, for $a_n\to 0$ as $n\to \infty$, we have
\begin{equation*}
\begin{aligned}
m^+(a_n,a_n)+o_n(1)&= E(\vec{u}_n)= \kappa_n^{\frac{6-N}{2}}\alpha^{-2}E_0(\vec{v}_n)- \kappa_n^{\frac{2p-N}{2}}\alpha^{-(p-2)}\sum^{3}_{i=1} \|v_{i,n}\|^p_p\\
&\ge \kappa_n^{\frac{6-N}{2}}\alpha^{-2}m_0\left(\sqrt{2}\|w\|_2,\sqrt{2}\|w\|_2\right)+o\left(a^{\frac{2(6-N)}{4-N}}_n\right),
\end{aligned}
\end{equation*}
where we used  the definition of $\kappa_n$ to have
$\kappa_n^{\frac{2p-N}{2}} \sim  a_n^{\frac{2(2p-N)}{4-N}} $, then we can estimate the remainder with $o\left(a_n^{\frac{2(6-N)}{4-N}}\right) $, as $p>3$.

\noindent From the definition of $m_0(\sqrt{2}\|w\|_2,\sqrt{2}\|w\|_2)$, for any $\varepsilon>0$, there exists $\vec{v}_0\in S(a_1,a_2)$ such that
\begin{equation*}
E_0(\vec{v}_0)\le m_0\left(\sqrt{2}\|w\|_2,\sqrt{2}\|w\|_2\right)+\varepsilon.
\end{equation*}
Let $u_{i,a_n}:=\kappa_n\alpha^{-1}v_{i,0}(\kappa_n^{1/2}x)$ for $i=1,2,3$. Therefore, $\vec{u}_{a_n}\in V(a_n,a_n)$ for $a_n$ small enough. Then
\begin{equation*}\label{x21}
\begin{aligned}
m^+(a_n,a_n)&=\inf_{\vec{u} \in V(a_n,a_n)} E(\vec{u})\le E(u_{1,a_n},u_{2,a_n},u_{3,a_n})\\
&\le  \kappa_n^{\frac{6-N}{2}}\alpha^{-2}E_0(\vec{v}_0)+ \kappa_n^{\frac{2p-N}{2}}\alpha^{-p}\sum^{3}_{i=1}\|v_{i,0}\|^p_p\\
&\le   \kappa_n^{\frac{6-N}{2}}\alpha^{-2} \left(m_0(\sqrt{2}\|w\|_2,\sqrt{2}\|w\|_2)+\varepsilon\right)+o\left(a^{\frac{2(6-N)}{4-N}}_n\right).
\end{aligned}
\end{equation*}
for all $\varepsilon>0$ and $a_n>0$ small enough. Therefore,
\[
m^+(a_n,a_n)= \kappa_n^{\frac{6-N}{2}}\alpha^{-2}m_0(\sqrt{2}\|w\|_2,\sqrt{2}\|w\|_2)+o\left(a^{\frac{2(6-N)}{4-N}}_n\right).
\]
This implies that $\{\vec{v}_n\}$ is a minimizing sequence for $m_0\left(\sqrt{2}\|w\|_2,\sqrt{2}\|w\|_2\right)$. If $\{u_n\}$ is a minimizing sequence of $m^+(a_n,a_n)$,  $E(\vec{u}_n)=m^+(a_n,a_n)+o(1)$.
By the definition of $\{\vec{v}_n\}$, see \eqref{def:v-i}, we have
\[
E(\vec{v}_n)=E(\alpha \kappa_n^{-1}\vec{u}_n( \kappa_n^{-\frac{1}{2}}x))=m_0(\sqrt{2}\|w\|_2,\sqrt{2}\|w\|_2)+o(a^{\frac{2(6-n)}{4-n}}_n),
\]
i.e.,  $\{v_n\}$ is a minimizing sequence of $m_2(\sqrt{2}\|w\|_2,\sqrt{2}\|w\|_2)$.
Up to a subsequence, there exists a radially symmetric Palais-Smale sequence $\{\vec{\tilde{v}}_n\}$ such that $\|\vec{\tilde{v}}_n-\vec{v}_n\|_{H^1(\R^N,\mathbb{C}^3)}=o_n(1)$.
Similar to the proof of Lemma \ref{lem4.2}, up to translation, there exists a minimizer $\vec{v}_0$ for $m_0(\sqrt{2}\|w\|_2,\sqrt{2}\|w\|_2)$ such that $\vec{\tilde{v}}_n\to \vec{v}_0$ in $H^1(\R^N,\mathbb{C}^3)$.
Indeed, by Lemma \ref{lem4.2}
for any minimizing sequence of $m_0(\sqrt{2}\|w\|_2, \sqrt{2}\|w\|_2)$ , there exists a compact subsequence.
\end{proof}
\vskip2mm
\begin{Prop}\label{lem4.3}
Let $N\leq 3$, $2_*<p<2^*$,  $a_1, a_2>0$,  and suppose that $\max\{a_1,a_2\}<D$. Let $\{\alpha_n\}$ be a positive sequence with $\alpha_n\to 0$ as $n\to \infty$, and let $\vec{u}_n$ be a minimizer for $m^+(a_1,a_2)$ (with $\alpha=\alpha_n>0$), up to a subsequence,
\begin{equation*}
\vec{v}_n:=\alpha_n^{-\frac{N}{4-N}}\vec{u}_n\left(\alpha_n^{-\frac{2}{4-N}}x\right)\to \vec{v} \quad \text{in} \quad  H^1(\R^N,\mathbb{C}^3),
\end{equation*}
where $\vec{v}$ is a minimizer of $m_0(a_1,a_2)$.
\end{Prop}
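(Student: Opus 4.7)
The plan is to exploit the natural rescaling suggested by the mass-subcritical three-wave interaction. First I would set $\vec{v}_n(x) := \alpha_n^{-N/(4-N)} \vec{u}_n(\alpha_n^{-2/(4-N)} x)$; this preserves the partial masses (so $\vec{v}_n\in S(a_1,a_2)$) and a direct change of variables yields the key identity
$$ \alpha_n^{-\frac{4}{4-N}} E_{\alpha_n}(\vec{u}_n) = E_0(\vec{v}_n) - \frac{\beta_n}{p}\sum_{i=1}^{3}\|v_{i,n}\|_p^p,\qquad \beta_n := \alpha_n^{\frac{N(p-2)-4}{4-N}},$$
where the assumption $p>2_*$ guarantees $\beta_n\to 0^+$ as $\alpha_n\to 0^+$.

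Next, to obtain a matching upper bound on $m^+(a_1,a_2;\alpha_n)$, I would take a minimizer $\vec{v}^{\ast}\in S(a_1,a_2)$ of $m_0(a_1,a_2)$ provided by Lemma \ref{lem4.2} and test $E_{\alpha_n}$ against $\vec{u}_n^{\ast}(y):=\alpha_n^{N/(4-N)}\vec{v}^{\ast}(\alpha_n^{2/(4-N)} y)$. One verifies $\vec u_n^{\ast}\in V(a_1,a_2)$ for $n$ large, since its gradient norm is $O(\alpha_n^{2/(4-N)})\ll \rho^*$ and $\mathrm{Re}\int v_1^{\ast}v_2^{\ast}\overline{v_3^{\ast}}>0$ (because $E_0(\vec v^{\ast})<0$). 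Inserting this test function into the identity above gives
$$\alpha_n^{-\frac{4}{4-N}} m^+(a_1,a_2;\alpha_n)\le m_0(a_1,a_2)+o(1).$$

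The crucial step is the uniform $H^1$-boundedness of $\{\vec v_n\}$. By Lemma \ref{lem2.4} the infimum is attained inside $\mathcal{P}^+_{a_1,a_2}\cap\mathcal{M}$, and Lemma \ref{lem2.3}(ii) forces any $\vec u\in\mathcal{P}^+_{a_1,a_2}$ to satisfy $(\sum_i\|\nabla u_i\|_2^2)^{1/2}<R_0$, where $R_0$ is the smaller zero of $h$ in Lemma \ref{lem2.11}. A short inspection of $h$ shows $R_0\sim \alpha_n^{2/(4-N)}$ as $\alpha_n\to 0$ (the leading balance at the smaller root is $\tfrac{1}{2}\rho^2\sim A_2\rho^{N/2}$ with $A_2\sim \alpha_n$). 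Hence $R_n:=\sum_i\|\nabla v_{i,n}\|_2^2=\alpha_n^{-4/(4-N)}\sum_i\|\nabla u_{i,n}\|_2^2$ is bounded, and together with the mass constraint gives $H^1$-boundedness. By Lemma \ref{lem2.6} I may take $\vec u_n$ positive, radial and radially decreasing, and the same then holds for $\vec v_n$; up to a subsequence $\vec v_n\rightharpoonup \vec v$ weakly in $H^1_r$ and strongly in $L^q$ for each $q\in(2,2^*)$, so that $\int v_{1,n}v_{2,n}\overline{v_{3,n}}\to\int v_1v_2\overline{v_3}$ and $\|v_{i,n}\|_p\to\|v_i\|_p$.

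Finally, combining the upper bound with the trivial $E_0(\vec v_n)\ge m_0(a_1,a_2)$ and the vanishing of $\beta_n\sum_i\|v_{i,n}\|_p^p$ gives $\alpha_n^{-4/(4-N)}m^+(a_1,a_2;\alpha_n)\to m_0(a_1,a_2)$ and $E_0(\vec v_n)\to m_0(a_1,a_2)$. Rescaling $P(\vec u_n)=0$ becomes $R_n=\gamma_p\beta_n\sum_i\|v_{i,n}\|_p^p+\tfrac{N}{2}\mathrm{Re}\int v_{1,n}v_{2,n}\overline{v_{3,n}}\to \tfrac{N}{2}\mathrm{Re}\int v_1v_2\overline{v_3}$. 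Setting $\omega_{i,n}:=\alpha_n^{-4/(4-N)}\lambda_{i,n}$, the Euler–Lagrange system rescales to
$$-\Delta v_{i,n}+\omega_{i,n}v_{i,n}=\beta_n|v_{i,n}|^{p-2}v_{i,n}+C_i(\vec v_n),$$
with $C_i(\vec v_n)$ the appropriate cubic coupling and $\omega_{3,n}=\omega_{1,n}+\omega_{2,n}$; the identity $\omega_{1,n}a_1^2+\omega_{2,n}a_2^2=(1-\gamma_p)\beta_n\sum_i\|v_{i,n}\|_p^p+(3-\tfrac{N}{2})\mathrm{Re}\int v_{1,n}v_{2,n}\overline{v_{3,n}}$ combined with the arguments of Lemma \ref{lem4.2} yields boundedness and, along a further subsequence, $\omega_{i,n}\to\omega_i>0$, so $\vec v$ solves the limit system \eqref{n1}. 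The Pohozaev identity for \eqref{n1} reads $\sum_i\|\nabla v_i\|_2^2=\tfrac{N}{2}\mathrm{Re}\int v_1v_2\overline{v_3}$; together with the computed $\lim R_n$ and weak lower semicontinuity, this promotes the gradient convergence to strong convergence in $\dot H^1$. The main obstacle is the final step, ruling out mass loss: passing to the limit in the partial-mass identity gives $\omega_1 Q_1(\vec v)+\omega_2 Q_2(\vec v)=\omega_1 a_1^2+\omega_2 a_2^2$, and since $Q_i(\vec v)\le a_i^2$ with $\omega_i>0$ we conclude $Q_i(\vec v)=a_i^2$. Hence $\vec v\in S(a_1,a_2)$, the convergence is strong in $H^1(\R^N,\mathbb{C}^3)$, and $\vec v$ attains $m_0(a_1,a_2)$.
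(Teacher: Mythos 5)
Your proposal is correct and follows essentially the same route as the paper: the same rescaling $\vec{v}_n=\alpha_n^{-N/(4-N)}\vec{u}_n(\alpha_n^{-2/(4-N)}x)$, the same matching upper bound via a rescaled minimizer of $m_0(a_1,a_2)$ and lower bound via $E_0(\vec v_n)\ge m_0(a_1,a_2)$, yielding $\alpha_n^{-4/(4-N)}m^+(a_1,a_2)\to m_0(a_1,a_2)$ so that $\{\vec v_n\}$ is a minimizing sequence for $m_0(a_1,a_2)$, followed by the compactness machinery of Lemma \ref{lem4.2}. The only (harmless) deviations are that you extract the gradient bound from the $\alpha$-scaling of $R_0$ rather than from the energy inequalities \eqref{x3}--\eqref{ba1}, and you pass to the limit directly in the rescaled Euler--Lagrange system for the minimizers instead of going through a nearby Palais--Smale sequence as the paper does.
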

\begin{proof}
Let $\{\alpha_n\}\subset (0,\infty)$ with $\alpha_n\to 0$. From the definition of $m_0(a_1,a_2)$, for any $\varepsilon>0$ sufficiently small, there exists $\vec{v}_0\in S(a_1,a_2)$ such that $E_0(\vec{v}_0)\le m_0(a_1,a_2)+\varepsilon$. Let $u_{i,\alpha_n}(x):=\alpha_n^{\frac{N}{4-N}}v_{i,0}\left(\alpha_n^{\frac{2}{4-N}}x\right),(i=1,2,3)$. As the calculation in \eqref{x1}, we have
$(u_{1,\alpha_n},u_{2,\alpha_n},u_{3,\alpha_n})\in V(a_1,a_2)$, and then
\begin{equation}\label{x2}
\begin{aligned}
m^+(a_1,a_2)&=\inf_{\vec{u} \in V(a_1,a_2)} E(\vec{u})\le E(u_{1,\alpha_n},u_{2,\alpha_n},u_{3,\alpha_n})\\
&\le  \alpha_n^{\frac{4}{4-N}}E_0(\vec{v}_0)+\alpha_n^{\frac{N(p-2)}{4-N}}\sum^{3}_{i=1}\|v_{i,0}\|^p_p\le  \alpha_n^{\frac{4}{4-N}} \left(m_0(a_1,a_2)+\varepsilon\right)+o(\alpha_n^{\frac{4}{4-N}}),
\end{aligned}
\end{equation}
for all $\varepsilon>0$ and $\alpha_n>0$ small enough.
\vskip1mm
Let $\vec{u}_{n}\in V(a_1,a_2)$ be a minimizer of $m^+(a_1,a_2)$ for $\alpha_n>0$. Then, combining \eqref{x2} and the same argument as in \eqref{x3} and \eqref{ba1}, we can prove that there exist $C_1,C_2>0$ such that $C_1\alpha_n^{\frac{4}{4-N}} \le \sum^3_{i=1}\|\nabla u_{i,n}\|^2_2 \le C_2 \alpha_n^{\frac{4}{4-N}}$.
Define
\begin{equation*}
\vec{v}_n:=\alpha_n^{-\frac{N}{4-N}}\vec{u}_n\left(\alpha_n^{-\frac{2}{4-N}}x\right).
\end{equation*}
Then, $\vec{v}_n\in S(a_1,a_2)$, and there exists $C>0$ such that for all $\alpha_n<1$, $\sum^3_{i=1}\|\nabla v_{i,n}\|^2_2\le C$.
Hence,
\begin{equation*}
\begin{aligned}
m^+(a_1,a_2)+o_n(1)&= E(\vec{u}_n)
=\alpha_n^{\frac{4}{4-N}}\left(E_0(\vec{v}_n)-\frac{\alpha_n^{\frac{N(p-2)-4}{4-N}}}{p}\sum^3_{i=1}\| v_{i,n}\|^p_p\right)\\
&\ge m_0(a_1,a_2)\alpha_n^{\frac{4}{4-N}}+o\left(\alpha_n^{\frac{4}{4-N}}\right).
\end{aligned}
\end{equation*}
Thus, it follows that
\begin{equation*}
m^+(a_1,a_2)=m_0(a_1,a_2)\alpha_n^{\frac{4}{4-N}}+o\left(\alpha_n^{\frac{4}{4-N}}\right).
\end{equation*}
This implies that $\{\vec{v}_n\}$ is a minimizing sequence for $m_0(a_1,a_2)$. Up to a subsequence, there exists a radially symmetric  Palais-Smale sequence $\{\vec{\tilde{v}}_n\}$ such that $\|\vec{\tilde{v}}_n-\vec{v}_n\|_{H^1(\R^N,\mathbb{C}^3)}=o_n(1)$.
Similar to Lemma \ref{lem4.2}, there exists a minimizer $\vec{v}$ for $m_0(a_1,a_2)$ such that
$\vec{\tilde{v}}_n\to \vec{v}$ in $H^1(\R^N,\mathbb{C}^3)$.
\end{proof}

\vskip2mm

\subsection{Mass-critical case}
In this subsection, we deal with the mass critical case $p=2_*=2+\frac{4}{N}$. As in the previous sections, $\alpha,a_1, a_2$ are positive. We recall the decomposition of $\mathcal{ P}_{a_1,a_2}=\mathcal{ P}_{a_1,a_2}^+\cup \mathcal{ P}_{a_1,a_2}^0\cup \mathcal{ P}_{a_1,a_2}^-$ as  in Section 2, see \eqref{eq:c41}. From the definition of $\mathcal{ P}_{a_1,a_2}^0$, i.e., $\Psi'_{\vec{u}}(1)=\Psi''_{\vec{u}}(1)=0$, then necessarily $u_i=0~(i=1,2,3)$. Therefore, $\mathcal{ P}_{a_1,a_2}^0=\emptyset$. Similarly  to Lemma \ref{lem2.2}, we can also claim that $\mathcal{ P}_{a_1,a_2}\cap \mathcal{M}$ is a smooth manifold of codimension 1 in $H^1(\R^N,\mathbb{C}^3)$.
\vskip1mm
\begin{Lem}\label{lem5.1}
If $\max\{a_1,a_2\}<\left(\frac{N+2}{N}\right)^{\frac{N}{4}}\left(C(N,2_*)\right)^{-\frac{N+2}{2}}$ and $a_1,a_2>0$, then for all $\vec{u}\in S(a_1,a_2)\cap\mathcal{M}$, there exists $\sigma_{\vec{u}}$ such that $\sigma_{\vec{u}}\star\vec{u}\in\mathcal{P}_{a_1,a_2}$. Further, $\sigma_{\vec{u}}$ is the unique critical point of the function $\Psi_{\vec{u}}$ and it is a strict minimum point at negative level.
Moreover:\smallskip

\noindent\textup{(i)} $\Psi_{\vec{u}}$ is strictly decreasing in $(0,\sigma_{\vec{u}})$,\smallskip

\noindent\textup{(ii)} $\mathcal{ P}_{a_1,a_2}=\mathcal{ P}_{a_1,a_2}^+$ and $P(\vec{u})<0$ if and only if $\sigma_{\vec{u}}<1$, \smallskip

\noindent \textup{(iii)} the map $\vec{u} \mapsto \sigma_{\vec{u}} \in \mathbb{R}^+$ is of class $ C^1$.
\end{Lem}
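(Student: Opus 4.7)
The plan is to exploit the fact that when $p = 2_* = 2 + 4/N$ one has $\gamma_p = 2/p$, so $p\gamma_p = 2$. Consequently, the power-type term in the fiber map $\Psi_{\vec{u}}(s)$ defined in \eqref{eq:fiber} collapses into the same scaling as the kinetic energy, and $\Psi_{\vec{u}}$ reduces to a two-term function
\begin{equation*}
\Psi_{\vec{u}}(s) = A(\vec{u})\, s^2 - B(\vec{u})\, s^{N/2},
\end{equation*}
with
\begin{equation*}
A(\vec{u}) := \frac{1}{2}\sum_{i=1}^3 \|\nabla u_i\|_2^2 - \frac{1}{p}\sum_{i=1}^3 \|u_i\|_p^p, \qquad B(\vec{u}) := \alpha \, \mathrm{Re}\!\int u_1 u_2 \overline{u}_3.
\end{equation*}
Since $\vec{u}\in\mathcal{M}$ forces $B(\vec{u})>0$, the key step is to show that $A(\vec{u})>0$ under the prescribed smallness on $\max\{a_1,a_2\}$.

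First I would combine the Gagliardo-Nirenberg inequality \eqref{a1} at $p=2_*$ (where $p\gamma_p=2$ and $p(1-\gamma_p)=4/N$) with the bound $\|u_i\|_2 \le \max\{a_1,a_2\}$ for $i=1,2,3$, valid on $S(a_1,a_2)$, to obtain
\begin{equation*}
\frac{1}{p}\sum_{i=1}^3 \|u_i\|_p^p \le \frac{C(N,2_*)^p}{p}\,(\max\{a_1,a_2\})^{4/N}\sum_{i=1}^3\|\nabla u_i\|_2^2.
\end{equation*}
Using $p/2=(N+2)/N$ and $p\cdot N/4=(N+2)/2$, a direct check shows that the hypothesis $\max\{a_1,a_2\} < \bigl(\tfrac{N+2}{N}\bigr)^{N/4}\,C(N,2_*)^{-(N+2)/2}$ is precisely equivalent to $\tfrac{C(N,2_*)^p}{p}(\max\{a_1,a_2\})^{4/N}<\tfrac{1}{2}$. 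Hence $A(\vec{u})>0$ for every nontrivial $\vec{u}\in S(a_1,a_2)$.

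Once $A,B>0$ are secured, and noting that $N\le 3$ gives $N/2<2$, the remaining analysis is elementary. The function $f(s):=As^2-Bs^{N/2}$ vanishes at $s=0$, is strictly negative on a right neighborhood of $0$, tends to $+\infty$ as $s\to+\infty$, and $f'(s)=0$ admits the unique positive root $\sigma_{\vec{u}}=\bigl(NB/(4A)\bigr)^{2/(4-N)}$, which is a strict global minimum with $f(\sigma_{\vec{u}})<0$. This furnishes the existence and uniqueness of $\sigma_{\vec{u}}$ and item (i), and the identity $s\Psi'_{\vec{u}}(s)=P(s\star\vec{u})$ yields $\sigma_{\vec{u}}\star\vec{u}\in\mathcal{P}_{a_1,a_2}$. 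Since every $\vec{u}\in\mathcal{P}_{a_1,a_2}\cap\mathcal{M}$ necessarily has $\sigma_{\vec{u}}=1$ and the critical point is a strict minimum, $\Psi''_{\vec{u}}(1)>0$, so $\mathcal{P}_{a_1,a_2}=\mathcal{P}^+_{a_1,a_2}$. The sign characterization of $P(\vec{u})=\Psi'_{\vec{u}}(1)$ in (ii) then follows at once from the strict monotonicity of $\Psi_{\vec{u}}$ about its unique minimizer $\sigma_{\vec{u}}$. For (iii), I would apply the Implicit Function Theorem to the $C^1$ map $g(s,\vec{u}):=\Psi'_{\vec{u}}(s)$ at $(\sigma_{\vec{u}},\vec{u})$; since $\partial_s g(\sigma_{\vec{u}},\vec{u})=\Psi''_{\vec{u}}(\sigma_{\vec{u}})>0$, we conclude that $\vec{u}\mapsto\sigma_{\vec{u}}$ is of class $C^1$, precisely as in Lemma \ref{lem2.3}(iv).

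The main technical obstacle is verifying that the explicit threshold $\bigl(\tfrac{N+2}{N}\bigr)^{N/4}C(N,2_*)^{-(N+2)/2}$ matches exactly the Gagliardo-Nirenberg-based smallness needed to guarantee $A(\vec{u})>0$; once this identification is in place, the remainder reduces to the elementary one-variable analysis of $f(s)=As^2-Bs^{N/2}$ with $A,B>0$ and $N/2<2$, together with a standard Implicit Function Theorem argument.
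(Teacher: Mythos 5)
Your proposal is correct and follows essentially the same route as the paper: at $p=2_*$ the fiber map collapses to $As^2-Bs^{N/2}$, the Gagliardo--Nirenberg inequality with the stated smallness of $\max\{a_1,a_2\}$ gives $A>0$ (your verification that the threshold is exactly $\tfrac{C(N,2_*)^p}{p}(\max\{a_1,a_2\})^{4/N}<\tfrac12$ matches the paper's computation), and the rest is the same one-variable analysis plus the Implicit Function Theorem. The only caveat is in (ii): since $\Psi_{\vec u}'<0$ on $(0,\sigma_{\vec u})$ and $P(\vec u)=\Psi_{\vec u}'(1)$, the monotonicity actually gives $P(\vec u)<0$ iff $\sigma_{\vec u}>1$, so the inequality direction printed in the statement (and in the paper's own proof) appears to be a typo rather than a gap in your argument.
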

\begin{proof}
\noindent \textup{(i)} Using that $p=2_*$ and the   definition of $\Psi$ in \eqref{eq:fiber}, we have
\begin{equation}\label{y1}
\begin{aligned}
\Psi_{\vec{u}}(s) \ge \frac{s^2}{2}\left(\sum^{3}_{i=1}\|\nabla u_i\|^2_2\left(1-\frac{N\left(C(N,2_*)\right)^{2+\frac{4}{N}}}{N+2} \max\left\{a_1^{\frac{4}{N}},a_2^{\frac{4}{N}}\right\}\right)\right)-s^{\frac{N}{2}}\alpha \mathrm{Re}\int u_1u_2\overline{u}_3.
\end{aligned}
\end{equation}
Note that for any $\vec{u}\in \mathcal{M}$, $s\star \vec{u}\in \mathcal{P}_{a_1,a_2}$ if and only if $\Psi'_{\vec{u}}(s)=0$. From the latter property, if $1-\frac{N\left(C(N,2_*)\right)^{2+\frac{4}{N}}}{N+2}  \max\{a_1^{\frac{4}{N}},a_2^{\frac{4}{N}}\}$ is positive,
then $\Psi_{\vec{u}}(s)$ has a unique critical point $\sigma_{\vec{u}}$, which is a strict minimum point at negative level. Therefore, under the bound condition on  $\max\{a_1,a_2\}$ as in the statement of the Lemma, we have that
\[
\sum^{3}\limits_{i=1}\left(\frac{1}{2}\|\nabla u_i\|^2_2-\frac{N}{2N+4}\|u_i\|^{2_*}_{2_*}\right)>0.
\]
\vskip2mm

\noindent \textup{(ii)}  If $\vec{u}\in \mathcal{ P}_{a_1,a_2}\cap \mathcal{M}$, then $\sigma_{\vec{u}}$ is a minimum point,
we have that $\Psi''_{\vec{u}}(\sigma_{\vec{u}})\ge 0$. Since $\mathcal{P}^0_{a_1,a_2}=\emptyset$, we have $\vec{u}\in \mathcal{ P}^+_{a_1,a_2}$.  Finally, $\Psi'_{\vec{u}}(s)>0$ if and only if $s>\sigma_{\vec{u}}$, then $P(\vec{u})=\Psi'_{\vec{u}}(1)<0$ if and only if $\sigma_{\vec{u}}<1$.
\smallskip

\noindent \textup{(iii)} To prove that the map $\vec{u}\in S(a_1,a_2)\cap \mathcal{M}\mapsto \sigma_{\vec{u}}\in \R^+$ is of class $ C^1$, we can apply the Implicit Function Theorem as in Lemma \ref{lem2.3}.
\end{proof}

\begin{Lem} \label{cor5.1}
Let $N\leq 3$, assume $p=2_*$, and let $\alpha,a_1, a_2>0$. We have the followings:\smallskip

\noindent \textup{(i)} if $\max\{a_1,a_2\}<\left(\frac{N+2}{N}\right)^{\frac{N}{4}}\left(C(N,2_*)\right)^{-\frac{N+2}{2}}$, then
\begin{equation*}
-\infty<m^+(a_1,a_2):=\inf_{\vec{u}\in \mathcal{ P}^+_{a_1,a_2}\cap \mathcal{M}}E(\vec{u})=\inf_{\vec{u}\in S(a_1,a_2)}E(\vec{u})<0,
\end{equation*}
\smallskip

\noindent \textup{(ii)} if $\min\{a_1,a_2\}\ge\left(\frac{N+2}{N}\right)^{\frac{N}{4}}\left(C(N,2_*)\right)^{-\frac{N+2}{2}}$, then
$\inf_{\vec{u}\in S(a_1,a_2)}E(\vec{u})=-\infty$.
\end{Lem}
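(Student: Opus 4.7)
My approach combines a Gagliardo--Nirenberg coercivity argument with a trial-function construction using the mass-critical ground state $Q$. Throughout, set $K := \left(\frac{N+2}{N}\right)^{N/4}C(N,2_*)^{-(N+2)/2}$ for the threshold appearing in the hypothesis.

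For (i), I first apply the sharp Gagliardo--Nirenberg inequality with $p=2_*$ componentwise, using $\|u_i\|_2 \leq \max\{a_1,a_2\}$, to obtain
\[
\sum_{i=1}^{3}\Bigl(\tfrac{1}{2}\|\nabla u_i\|_2^2 - \tfrac{1}{2_*}\|u_i\|_{2_*}^{2_*}\Bigr) \geq \tfrac{\beta}{2}\sum_{i=1}^{3}\|\nabla u_i\|_2^2, \qquad \beta := 1 - \tfrac{NC(N,2_*)^{2_*}}{N+2}\max\{a_1,a_2\}^{4/N},
\]
where positivity of $\beta$ is exactly the hypothesis $\max\{a_1,a_2\}<K$. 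Since the three-wave coupling is mass-subcritical, the estimate~\eqref{z4} gives $|\alpha\,\mathrm{Re}\int u_1u_2\overline{u}_3|\leq A_2\rho^{N/2}$ with $\rho^2:=\sum_i\|\nabla u_i\|_2^2$, so
\[
E(\vec u) \geq \tfrac{\beta}{2}\rho^2 - A_2\rho^{N/2},
\]
which is bounded below because $N/2 < 2$ for $N\le 3$; hence $\inf_{S(a_1,a_2)}E > -\infty$. For the strict negativity, I pick any $\vec u \in S(a_1,a_2)\cap\mathcal M$ (e.g.\ three positive radial functions with obvious $L^2$ normalisation) and evaluate~\eqref{y1} as $s\to 0^+$: the $-s^{N/2}\alpha\,\mathrm{Re}\int u_1 u_2\overline{u}_3$ term dominates the positive $s^2$ part and is strictly negative on $\mathcal M$.

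The equality $m^+(a_1,a_2)=\inf_{S(a_1,a_2)}E$ splits into two inclusions. Lemma~\ref{lem5.1} assigns to each $\vec u \in S(a_1,a_2)\cap\mathcal M$ a unique $\sigma_{\vec u}$ such that $\sigma_{\vec u}\star\vec u\in\mathcal P^+_{a_1,a_2}\cap\mathcal M$ realises $\min_{s>0}\Psi_{\vec u}(s)\leq\Psi_{\vec u}(1)=E(\vec u)$, giving $\inf_{\mathcal P^+\cap\mathcal M}E\leq\inf_{S\cap\mathcal M}E$; the opposite inequality is trivial by inclusion. The phase trick $(u_1,u_2,u_3)\mapsto(e^{i\theta_1}u_1,e^{i\theta_2}u_2,e^{i\theta_3}u_3)$ preserves all masses, gradient and $L^{2_*}$ norms but lets me pick phases so that $\mathrm{Re}\int u_1 u_2\overline{u}_3=|\int u_1 u_2\overline{u}_3|$, bridging $\inf_{S\cap\mathcal M}E$ and $\inf_S E$.

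For (ii), I use the unique positive radial solution $Q$ of $-\Delta Q + Q = Q^{2_*-1}$, which saturates the GN inequality. Multiplying this equation by $Q$ and combining with the Pohozaev identity yields $\|Q\|_2^2 = K^2$, $\|\nabla Q\|_2^2 = \frac{N}{2}K^2$ and $\|Q\|_{2_*}^{2_*} = \frac{N+2}{2}K^2$; in particular $E_0(cQ) = \frac{Nc^2K^2}{4}(1-c^{4/N})$. Take the trial $\vec u := (c_1 Q, c_2 Q, 0)$ with $c_i := a_i/K \geq 1$: the choice $u_3=0$ makes $\vec u \in S(a_1,a_2)$ automatic, and
\[
E(\vec u) = \sum_{i=1,2}\tfrac{Na_i^2}{4}\bigl(1-(a_i/K)^{4/N}\bigr) \leq 0,
\]
with strict inequality whenever some $a_i>K$. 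Since $u_3=0$ kills the coupling, the mass-preserving scaling gives $E(s\star\vec u) = s^2 E(\vec u)\to -\infty$ as $s\to+\infty$, proving $\inf_S E=-\infty$. The main technical obstacle is the perfectly degenerate boundary $a_1=a_2=K$, where the above trial only yields $E(\vec u)=0$; this case requires a refined perturbation reintroducing a small $u_3$ to trade a quadratically-small $\sum_i E_0(u_i)$ against a linearly-small coupling term.
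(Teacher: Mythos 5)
Your proof of part (i) is correct and takes essentially the paper's route: the coercivity bound is exactly the paper's estimate \eqref{y1}, and the strict negativity comes from letting $s\to 0^+$ in the fiber map, where the coupling term $-s^{N/2}\alpha\,\mathrm{Re}\int u_1u_2\overline u_3<0$ dominates the $s^2$ part. You are in fact more careful than the paper on the identity $m^+(a_1,a_2)=\inf_{S(a_1,a_2)}E$, which the paper asserts without comment; your combination of Lemma \ref{lem5.1} with the phase rotation closes it once you add the immediate remark that any $\vec u$ whose coupling term vanishes even after optimal phase adjustment satisfies $E(\vec u)\ge\frac{\beta}{2}\sum_i\|\nabla u_i\|_2^2\ge 0>m^+(a_1,a_2)$ and is therefore irrelevant for the infimum. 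Similarly, your explicit trial function $(c_1Q,c_2Q,0)$ in part (ii), with the correct identities $\|Q\|_2=K$, $\|\nabla Q\|_2^2=\tfrac{N}{2}K^2$, $\|Q\|_{2_*}^{2_*}=\tfrac{N+2}{2}K^2$, settles every case with $\max\{a_1,a_2\}>K$ and is more explicit than the paper's one-line reference to \cite{Soave1} (your ``$E_0(cQ)$'' should read $J(cQ)$, since $E_0$ is reserved in the paper for the functional without the power nonlinearity, but this is cosmetic).

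The genuine gap is the boundary case $a_1=a_2=K$, which you flag but defer, and the fix you sketch does not work in every dimension. Quantitatively: with $u_1=u_2=cQ_\lambda$, $Q_\lambda=\lambda^{N/2}Q(\lambda\,\cdot)$, a third component of mass $\epsilon$ concentrated at the same scale, and $c^2=1-\epsilon^2/K^2$, the ``quadratically small'' loss is $J(u_1)+J(u_2)\approx \lambda^2\epsilon^2$ while the coupling gain is $\approx \alpha\epsilon\lambda^{N/2}$; optimizing in $\epsilon$ gives $E\approx -c\,\lambda^{N-2}$, which diverges to $-\infty$ only for $N=3$. This is not an artifact of the trial function: on $S(K,K)$ the constraints force $\|u_1\|_2^2=\|u_2\|_2^2=K^2-\epsilon^2$ with $\epsilon=\|u_3\|_2$, so sharp Gagliardo--Nirenberg gives $J(u_1)+J(u_2)\ge c\,K^{-2}\epsilon^2G^2$ with $G^2=\|\nabla u_1\|_2^2+\|\nabla u_2\|_2^2$, while $\alpha\int|u_1u_2u_3|\le \alpha\|u_3\|_2\|u_1\|_4\|u_2\|_4\le C\alpha\,\epsilon\,G^{N/2}$; minimizing $c\,K^{-2}\epsilon^2G^2-C\alpha\epsilon G^{N/2}$ in $G$ produces the lower bound $-C'\epsilon^{(4-2N)/(4-N)}$, which is bounded over $\epsilon\in(0,K]$ when $N\le 2$. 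Hence for $N\le 2$ no perturbation can yield $\inf_{S(K,K)}E=-\infty$, and the boundary case of (ii) must either be excluded or established (for $N=3$ only) by actually carrying out the computation above. You should be aware that the paper's own argument for (ii) degenerates at exactly the same point, so this is not a case you can close by a routine limiting argument.
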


\begin{proof}
We sketch the proof. As for
\textup{(i)}, from  \eqref{y1} we directly have that $E$ is coercive on  $S(a_1,a_2)$ and  $m^+(a_1,a_2)>-\infty$. By direct computations we have that $E(s\star \vec{u}) < 0$ for every $u\in \mathcal{P}^{+}_{a_1,a_2}\cap \mathcal{M}$ with $s>0$ small enough. Therefore, we know that $m^+(a_1,a_2) < 0$. \smallskip 
As for \textup{(ii)}, as in \cite[Section 3]{Soave1},  we can claim the existence of $\vec{u}\in S(a_1,a_2)$ such that $E(\vec{u})\le 0$, hence by \eqref{y1} and taking the limit  $\inf\limits_{\vec u\in S(a_1,a_2)}E(\vec u)=-\infty$.
\end{proof}

We state the following Lemmas, whose proofs are similar to the ones for Lemmas \ref{lem2.5}, \ref{lem4.1}, and Lemma \ref{lem3.6}, respectively.
\begin{Lem}\label{lem5.11}
Let $N\leq 3$. For $p=2_*$, $m^{+}(a_1,a_2)=m^+_r(a_1,a_2)$, where $m^+_r(a_1,a_2)$ is given by \eqref{y2}. In addition, $\inf\limits_{\mathcal{P}^{+}_{a_1,a_2}\cap \mathcal{M}} E(\vec{u})$ is attained  by $(e^{i\theta_1}w_1, e^{i\theta_1}w_2, e^{i(\theta_1+\theta_2)}w_3 )$ where $E(\vec{w})=\inf\limits_{\vec{u}\in \mathcal{P}^{+}_{r,a_1,a_2}\cap \mathcal{M}} E(\vec{u})$ and $(\theta_1,\theta_2)\in \R^2$.
\end{Lem}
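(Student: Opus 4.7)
The plan is to transpose the proof of Lemma \ref{lem2.5} to the mass-critical setting, exploiting the simplified geometry provided by Lemma \ref{lem5.1}: under our smallness assumption on $\max\{a_1,a_2\}$, every fiber map $\Psi_{\vec{u}}$ on $S(a_1,a_2)\cap\mathcal{M}$ has exactly one critical point $\sigma_{\vec{u}}$, which is a strict minimum, so $\mathcal{P}_{a_1,a_2}=\mathcal{P}^+_{a_1,a_2}$ and there is no $\mathcal{P}^-$ stratum to control.

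First I would establish the symmetrization inequality $m^+_r(a_1,a_2)\le m^+(a_1,a_2)$ (the reverse is trivial). Given $\vec{u}\in S(a_1,a_2)\cap\mathcal{M}$, set $\vec{w}=(|u_1|^*,|u_2|^*,|u_3|^*)$. The usual rearrangement properties give $\|\nabla w_i\|_2\le\|\nabla u_i\|_2$, $\|w_i\|_{2_*}=\|u_i\|_{2_*}$, and $\int w_1 w_2 w_3\ge\mathrm{Re}\int u_1 u_2\overline{u}_3>0$, so $\vec{w}\in\mathcal{M}$ too. Since $p=2_*$, the fiber map has the explicit form $\Psi_{\vec{v}}(s)=s^2 A_{\vec{v}}-s^{N/2}B_{\vec{v}}$ with $A_{\vec{v}}:=\sum_i\bigl(\tfrac12\|\nabla v_i\|_2^2-\tfrac{N}{2N+4}\|v_i\|_{2_*}^{2_*}\bigr)>0$ and $B_{\vec{v}}:=\alpha\mathrm{Re}\int v_1 v_2\overline{v}_3$. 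A short computation at $\sigma_{\vec{v}}=\bigl(NB_{\vec{v}}/(4A_{\vec{v}})\bigr)^{2/(4-N)}$ yields
\[
E(\sigma_{\vec{v}}\star\vec{v})=-\tfrac{4-N}{N}\bigl(\tfrac{N}{4}\bigr)^{\frac{4}{4-N}}\frac{B_{\vec{v}}^{4/(4-N)}}{A_{\vec{v}}^{N/(4-N)}},
\]
and the inequalities $A_{\vec{w}}\le A_{\vec{u}}$, $B_{\vec{w}}\ge B_{\vec{u}}$ give $E(\sigma_{\vec{w}}\star\vec{w})\le E(\sigma_{\vec{u}}\star\vec{u})$; taking the infimum via Lemma \ref{lem5.1} delivers the claim.

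Next I would prove the infimum is attained. Take a radially symmetric decreasing minimizing sequence $\{\vec{u}_n\}\subset\mathcal{P}^+_{r,a_1,a_2}\cap\mathcal{M}$ (possible by Step 1). Coercivity of $E|_{S(a_1,a_2)}$ under the bound $\max\{a_1,a_2\}<\bigl(\tfrac{N+2}{N}\bigr)^{N/4}C(N,2_*)^{-(N+2)/2}$, which is given by \eqref{y1} and exploited in Lemma \ref{cor5.1}, yields $H^1_r$-boundedness. Passing to a weak limit $\vec{u}$ and using the compact embedding $H^1_r(\R^N)\hookrightarrow L^r(\R^N)$ for $r\in(2,2^*)$ (in particular for $r=3$ when $N\le 3$ and for $r=2_*$), I can take the limit in the cubic coupling term and in the $L^{2_*}$ term. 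By the Lagrange multiplier rule as in Lemma \ref{lem2.6}, bounded multipliers $(\lambda_{1,n},\lambda_{2,n})$ converge to $(\lambda_1,\lambda_2)$ and the limit solves the corresponding elliptic system. The same three-case exclusion argument (Case~1 ruled out by $m^+(a_1,a_2)<0$ through the Pohozaev identity $P(\vec{u}_n)=0$, Case~2 ruled out again by the strict negativity of $E(\vec{u}_n)$, Case~3 by the structure of the coupling) forces $u_i\not\equiv0$ for every $i$, hence $\vec{u}\in\mathcal{M}$ and, via the multiplier identity analogous to \eqref{d51}, the mass constraints are saturated. Strong convergence then follows.

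Finally, I would extract the phase structure exactly as in the last part of Lemma \ref{lem2.5}: if $\vec{u}$ is a minimizer, then $\vec{w}=(|u_1|^*,|u_2|^*,|u_3|^*)\in\mathcal{P}^+_{r,a_1,a_2}\cap\mathcal{M}$ with $E(\vec{w})\le E(\vec{u})$, and equality forces all rearrangement inequalities to be equalities. In particular $\|\nabla|u_i|\|_2=\|\nabla u_i\|_2$ and $\mathrm{Re}\int u_1 u_2\overline{u}_3=\int|u_1||u_2||u_3|$; writing $u_j=e^{i\theta_j}\rho_j$ with $\rho_j=|u_j|$, the first set of equalities yields $\nabla\tilde u_j\equiv0$ (so the $\theta_j$ are constants) and the second yields $e^{i(\theta_1+\theta_2-\theta_3)}=1$, i.e.\ $e^{i\theta_3}=e^{i(\theta_1+\theta_2)}$, producing the claimed form $(e^{i\theta_1}w_1,e^{i\theta_2}w_2,e^{i(\theta_1+\theta_2)}w_3)$.

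The main obstacle is the compactness step: unlike the mass-supercritical case where the Pohozaev identity supplies quantitative control of $\|\nabla u_{i,n}\|_2$ via a subcritical term, here one must combine the coercivity from Lemma \ref{cor5.1}\textup{(i)} with the strict negativity $m^+(a_1,a_2)<0$ to rule out the vanishing alternatives, and one must be careful that the coupling term $\mathrm{Re}\int v_1 v_2\overline{v}_3$ passes to the limit without collapsing to zero (otherwise the limit would leave $\mathcal{M}$, contradicting $m^+(a_1,a_2)<0$ just as in Lemma \ref{lem2.6}).
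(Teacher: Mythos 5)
Your proposal is correct and follows essentially the same route as the paper, which proves this lemma by transposing the symmetrization-plus-fiber-map comparison and the phase-rigidity argument of Lemma \ref{lem2.5} to the mass-critical setting where Lemma \ref{lem5.1} guarantees $\mathcal{P}_{a_1,a_2}=\mathcal{P}^+_{a_1,a_2}$. Your explicit evaluation of $E(\sigma_{\vec v}\star\vec v)=-\frac{4-N}{4}\left(\frac{N}{4}\right)^{N/(4-N)}B_{\vec v}^{4/(4-N)}A_{\vec v}^{-N/(4-N)}$ is a clean (and correct) shortcut made possible by the two-term structure of $\Psi_{\vec u}$ at $p=2_*$, and your compactness step, while not needed for this particular statement, is the content of Lemma \ref{lem5.2}.
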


\begin{Lem}\label{lem5.2}
Let $N\leq 3$, $\alpha,a_1, a_2>0$ and assume $p=2_*$.  If it holds that $\max\{a_1,a_2\}<\left(\frac{N+2}{N}\right)^{\frac{N}{4}}\left(C(N,2_*)\right)^{-\frac{N+2}{2}}$, then
$E|_{S(a_1,a_2)}$ has a critical point $\vec{u}$ at $m^+(a_1,a_2)$, and $\vec{u}$ is real-valued,  positive, and radially symmetric for some $\lambda_1,\lambda_2>0$.
\end{Lem}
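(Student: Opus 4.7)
The strategy is to mirror the scheme developed in Lemma \ref{lem2.6} for the intracritical case, with modifications dictated by the mass-critical identity $p\gamma_p=2$. By Lemma \ref{lem5.11}, it suffices to produce a real-valued, nonnegative, radial minimizer $\vec{u}\in S(a_1,a_2)\cap H^1_r(\R^N,\R^3)$ for $m^+_r(a_1,a_2)$; the general form with phase factors then follows automatically.

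I would start from a minimizing sequence $\{\vec{u}_n\}$ for $m^+(a_1,a_2)$, pass to Schwarz rearrangements component-wise (which does not increase $E$, preserves the $L^2$ and $L^{2_*}$ norms, and can only increase $\mathrm{Re}\int u_1u_2\overline{u}_3$) to obtain $\vec{u}_n\in H^1_r(\R^N,\R^3)$ with nonnegative radially decreasing components, and invoke the coercivity estimate \eqref{y1}, valid under our hypothesis on $\max\{a_1,a_2\}$, to conclude boundedness in $H^1$. Since $\mathcal{P}_{a_1,a_2}=\mathcal{P}_{a_1,a_2}^+$ is a smooth codimension-one submanifold by Lemma \ref{lem5.1}, Ekeland's variational principle yields a Palais-Smale sequence, still denoted $\{\vec{u}_n\}$, with $E(\vec{u}_n)\to m^+(a_1,a_2)$, $P(\vec{u}_n)=o_n(1)$ and $dE|_{S(a_1,a_2)}(\vec{u}_n)\to 0$. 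The Lagrange multiplier rule produces bounded sequences $\lambda_{i,n}\to\lambda_i\in\R$ by testing against $\vec{u}_n$. The compact embedding $H^1_r(\R^N)\hookrightarrow L^q(\R^N)$ for $q\in(2,2^*)$, which covers both $q=3$ and $q=2_*$ since $N\le 3$, gives strong convergence of $\vec{u}_n$ in $L^3\times L^3\times L^3$ and in $L^{2_*}\times L^{2_*}\times L^{2_*}$, and $\vec{u}$ solves \eqref{eqA0.2} weakly.

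The main obstacle is ruling out total and partial vanishing of the limit $\vec{u}$. The mass-critical cancellation $\gamma_p/2=1/p$ gives the identity
\begin{equation*}
E(\vec{u}_n)=\tfrac{1}{2}P(\vec{u}_n)-\alpha\tfrac{4-N}{4}\mathrm{Re}\!\int u_{1,n}u_{2,n}\overline{u}_{3,n},
\end{equation*}
which replaces the $L^p$-based residual used in Lemma \ref{lem2.6}. If all $u_i\equiv 0$, the compact embedding forces $\int u_{1,n}u_{2,n}\overline{u}_{3,n}\to 0$, so $E(\vec{u}_n)\to 0$, contradicting $m^+(a_1,a_2)<0$ from Lemma \ref{cor5.1}(i); survival of a single component produces the same contradiction, since the triple product still vanishes in the limit by H\"older; survival of exactly two components is excluded by the algebraic structure of \eqref{eqA0.2} (any equation with a zero left-hand side and a nonzero coupling term on the right-hand side gives a contradiction). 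Therefore all $u_i\not\equiv 0$, and \cite[Lemma A.2]{IN} combined with the limit of the Lagrange equation yields $\lambda_1,\lambda_2>0$.

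To close, I would combine the Pohozaev identity for the limit $\vec{u}$ (obtained by passing to the limit in $P(\vec{u}_n)=o_n(1)$, using strong $L^3$ and $L^{2_*}$ convergence together with weak lower semicontinuity of the $H^1$ seminorm) with the relation obtained by summing the three Euler-Lagrange equations tested against $\overline{u}_i$. Proceeding as in the derivation of \eqref{d51} in Lemma \ref{lem2.6}, the positivity $\lambda_1,\lambda_2>0$ and the weak inequalities $\|u_1\|_2^2+\|u_3\|_2^2\le a_1^2$, $\|u_2\|_2^2+\|u_3\|_2^2\le a_2^2$ must in fact be equalities. This places $\vec{u}\in S(a_1,a_2)$, upgrades weak to strong $H^1$ convergence via norm convergence on the Hilbert sphere, gives $E(\vec{u})=m^+(a_1,a_2)$, and the strong maximum principle yields $u_i>0$ for $i=1,2,3$, completing the proof.
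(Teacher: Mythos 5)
Your proposal is correct and follows essentially the same route the paper intends: the paper omits a detailed proof of this lemma and refers back to the scheme of Lemmas \ref{lem2.5}, \ref{lem2.6} and \ref{lem4.2}, and your mass-critical identity $E=\tfrac12 P-\alpha\tfrac{4-N}{4}\mathrm{Re}\int u_1u_2\overline{u}_3$ is exactly the right replacement for the $L^p$ residual in the vanishing analysis. The one point to phrase more carefully is $N=1$, where $H^1_r(\R)\hookrightarrow L^q(\R)$ is not compact and one must use, as the paper does, that the rearranged minimizing sequence consists of radially non-increasing functions (and that the Ekeland Palais--Smale sequence is $H^1$-close to it).
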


\begin{Lem}\label{lem5.3}
Let $N\leq 3$, assume $p=2_*$, and let $\alpha>0$ and $a_1=a_2>0$.  If  $0<a_1<\left(\frac{N+2}{N}\right)^{\frac{N}{4}}\left(C(N,2_*)\right)^{-\frac{N+2}{2}}$, then for any ground state $\vec{u}\in S(a_1,a_1)$ of \eqref{eqA0.2}, if we let $a_1\to 0$, then 
we have
\begin{equation*}
\left(\alpha \kappa^{-1}u_1(\kappa^{-\frac{1}{2}}x), \alpha \kappa^{-1}u_2(\kappa^{-\frac{1}{2}}x),\alpha \kappa^{-1}u_3(\kappa^{-\frac{1}{2}}x)\right)\to \vec{v}_0
 \ \  \text{in}\ H^{1}(\R^N,\mathbb{C}^3),
\end{equation*}
where $\vec{v}_0$ is a minimizer of $m_0(\sqrt{2}\|w\|_2,\sqrt{2}\|w\|_2)$, $w$ is given in \eqref{eq1.3}, and $\kappa=\left(\frac{\alpha a_1}{\sqrt{2}\|w\|_2} \right)^{\frac{4}{4-N}}$.
\end{Lem}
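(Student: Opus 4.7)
The plan is to adapt the rescaling strategy of Lemma \ref{lem3.6} to the mass-critical exponent $p = 2_*$. Fix $\alpha > 0$ and a sequence $a_n \to 0^+$ below the threshold from Lemma \ref{lem5.2}. By that Lemma, there exist positive radial ground states $\vec{u}_n \in \mathcal{P}^+_{a_n, a_n} \cap \mathcal{M}$ with $E(\vec{u}_n) = m^+(a_n, a_n) < 0$ and positive Lagrange multipliers. Define
\begin{equation*}
v_{i,n}(x) := \alpha \kappa_n^{-1} u_{i,n}(\kappa_n^{-1/2} x), \quad i = 1, 2, 3.
\end{equation*}
Using $\kappa_n^{(4-N)/2} = \alpha^2 a_n^2/(2\|w\|_2^2)$, a direct computation shows $\vec{v}_n \in S(\sqrt{2}\|w\|_2, \sqrt{2}\|w\|_2) \cap \mathcal{M}$, and the energy rescales as
\begin{equation*}
E(\vec{u}_n) = \alpha^{-2} \kappa_n^{(6-N)/2} \left[E_0(\vec{v}_n) - \frac{\alpha^{-(p-2)} \kappa_n^{p-3}}{p} \sum_{i=1}^{3} \|v_{i,n}\|_p^p\right].
\end{equation*}
The crucial feature is that for $p = 2_*$ and $N \leq 3$, the exponent $p - 3 = 4/N - 1 > 0$, so $\kappa_n^{p-3} \to 0^+$ and the last term is a genuine higher-order correction.

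For the upper bound, choose a minimizer $\vec{v}_0$ of $m_0(\sqrt{2}\|w\|_2, \sqrt{2}\|w\|_2)$ (existence by Lemma \ref{lem4.2}, reduced to $\mathcal{M}$ by a phase choice) and set $u_{i,a_n}(x) := \kappa_n \alpha^{-1} v_{i,0}(\kappa_n^{1/2} x)$. An estimate analogous to \eqref{x1} ensures $\vec{u}_{a_n} \in V(a_n, a_n)$ for $a_n$ small, and running the scaling in reverse gives $m^+(a_n, a_n) \leq E(\vec{u}_{a_n}) = \alpha^{-2} \kappa_n^{(6-N)/2}(m_0 + o(1))$. For the matching lower bound, I first establish an $H^1$ a priori bound. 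Set $X_n := \sum_i \|\nabla v_{i,n}\|_2^2$ and $J_n := \mathrm{Re} \int v_{1,n} v_{2,n} \overline{v}_{3,n}$, which is positive since $\vec{u}_n \in \mathcal{M}$. Mass-critical Gagliardo-Nirenberg ($p\gamma_p = 2$) together with the uniform $L^2$ bound gives $\sum_i \|v_{i,n}\|_p^p \leq C X_n$, while H\"older combined with Gagliardo-Nirenberg at $p = 3$ yields $J_n \leq C X_n^{N/4}$ for $N \leq 3$. Substituting into the scaling identity above and using $E(\vec{u}_n) < 0$ yields
\begin{equation*}
\tfrac{1}{2} X_n - C X_n^{N/4} < C \kappa_n^{p-3} X_n;
\end{equation*}
since $N/4 < 1$ and $\kappa_n^{p-3} \to 0$, this forces $X_n \leq C$ uniformly in $n$. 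The correction term is then genuinely $o(1)$ after dividing by $\alpha^{-2} \kappa_n^{(6-N)/2}$, and combining the upper bound with $E_0(\vec{v}_n) \geq m_0$ gives $E_0(\vec{v}_n) = m_0(\sqrt{2}\|w\|_2, \sqrt{2}\|w\|_2) + o(1)$.

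Thus $\{\vec{v}_n\}$ is a radial minimizing sequence for $m_0(\sqrt{2}\|w\|_2, \sqrt{2}\|w\|_2)$. Applying Ekeland's principle in the radial subspace produces a Palais-Smale sequence $\{\vec{\tilde v}_n\}$ with $\|\vec{\tilde v}_n - \vec{v}_n\|_{H^1(\R^N, \mathbb{C}^3)} = o(1)$, and rerunning the compactness argument of Lemma \ref{lem4.2} (radial compactness $H^1_r \hookrightarrow L^3$, positivity of the Lagrange multipliers via \cite[Lemma A.2]{IN}, and the Pohozaev identity for the limit system \eqref{y7} to rule out the semi-trivial scenarios) gives strong convergence $\vec{\tilde v}_n \to \vec{v}_0$ in $H^1(\R^N, \mathbb{C}^3)$ to a minimizer $\vec{v}_0$ of $m_0(\sqrt{2}\|w\|_2, \sqrt{2}\|w\|_2)$. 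Consequently $\vec{v}_n \to \vec{v}_0$ in $H^1$, as required. The principal obstacle I expect is the a priori bound on $X_n$: it relies essentially on $p - 3 > 0$, a feature of the mass-critical regime in dimension $N \leq 3$, without which the Gagliardo-Nirenberg bootstrap would fail and $\vec{v}_n$ need not behave as a true minimizing sequence for $m_0$.
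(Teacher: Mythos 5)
Your proposal is correct and follows essentially the same route as the paper, which gives no separate proof for this lemma but declares it ``similar to Lemma \ref{lem3.6}''; you carry out exactly that adaptation, with the right scaling identity, the correct observation that $p-3=\frac{4}{N}-1>0$ at $p=2_*$ for $N\le 3$ makes the power term a higher-order correction, and a valid Gagliardo--Nirenberg bootstrap (using $p\gamma_p=2$ and coercivity from $N/4<1$) to replace the supercritical-case gradient bounds \eqref{ba1} before concluding via the compactness argument of Lemma \ref{lem4.2}. No gaps.
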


We are now in position to prove the first main result of the paper, namely Theorem \ref{th1.1}.

\subsection{Proof of Theorem \ref{th1.1}.} We start with the intercritical case $2_*<p<2^*$. \smallskip

\noindent \textup{(i)} It follows from Lemmas \ref{lem2.5} and \ref{lem2.6} that there is a local minimizer of $E$ on $V(a_1,a_2)$.  \smallskip

\noindent \textup{(ii)} We shall prove that the set $\mathcal G$ defined in the Introduction is orbitally stable.  By contradiction,
suppose that there exist $\varepsilon_0>0$, a sequence of times $\{t_n\}\subset \R^+$, and a sequence of initial data $\{\vec{\psi}_{0,n}\} \subset H^1(\R^N,\mathbb{C}^3)$ such that the unique (for $n$ fixed) solution $\vec{\psi}_{\psi_{0,n}}(t)$ to the problem \eqref{eqA0.1} with initial datum $\vec{\psi}_{\psi_{0,n}}(0)=\vec{\psi}_{0,n}$ satisfies
\begin{equation*}
\text{dist}_{H^{1}(\R^N,\mathbb{C}^3)}\left(\vec{\psi}_{0,n},\mathcal G\right) < \frac{1}{n} \ \ \text{and} \ \  \text{dist}_{H^{1}(\R^N,\mathbb{C}^3)}\left(\vec{\psi}_{\psi_{0,n}}(t_n),\mathcal G\right)\ge \varepsilon_0.
\end{equation*}
Without loss of generality, we assume $\vec{\psi}_{0,n}\in S(a_1,a_2)$. Denote $\vec{\psi}_{\psi_{0,n}}(t_n)$ by $\vec{u}_n$.
Then by the conservation laws \eqref{eq:energy} and \eqref{eq:cons-masses}, $\{\vec u_n\} \subset H^1(\R^N,\mathbb{C}^3)$ satisfies $Q_1(\vec{u}_n)=a^2_1$, $Q_2(\vec{u}_n)=a^2_2$ and $E(\vec{u}_n) \to m^{+}(a_1,a_2)$.

\vskip1mm

\noindent We shall prove that for any $n\in \mathbb{N}$, $\vec{\psi}_{\psi_{0,n}}(t)$ is globally defined in time and $\vec{\psi}_{\psi_{0,n}}(t)\in B_{\rho^*}$ for any $t>0$, recalling that $\rho^*$ is given in Lemma \ref{lem2.11}. Since $\vec{\psi}_{0,n}\in B_{\rho^*}$, if $\vec{\psi}_{\psi_{0,n}}(t)$ leaves the set $B_{\rho^*}$, there exists $t_1\in (0,T_{\max})$ such that $\vec{\psi}_{\psi_{0,n}}(t_1)\in \partial B_{\rho^*}$, where $T_{\max}$ is the maximal forward time of existence for the solution $\vec{\psi}_{\psi_{0,n}}$. By \eqref{b3}, we have $E(\vec{\psi}_{\psi_{0,n}}(t_1))\ge h(\rho^*)\ge 0$, contradicting  the conservation of the energy.
If  $T_{\max}<\infty$,  by the blow-up alternative $\lim_{t\to T_{\max}^-}\left(\sum_{i=1}^{3}\|\psi_{\psi_{0,n}, i}(t)\|^2_2 \right)$ $=\infty$, then  there also exists $t_2\in (0,T_{\max})$ such that $\vec{\psi}_{\psi_{0,n}}(t_2)\in \partial B_{\rho^*}$. Analogously to the proof of the fact that $\vec{\psi}_{\psi_{0,n}}(t)\in B_{\rho^*}$, one shows that $T_{\max}=+\infty.$
This implies that solutions starting in $B_{\rho^*}$ are globally defined in time.
By Lemmas \ref{lem2.11} and \ref{lem2.4}, if $\max\{a_1,a_2\}<D$, we thus get
\begin{equation*}
\begin{aligned}
m^+(a_1,a_2)&=\inf_{\vec{u}\in S(a_1,a_2)\cap B_{\rho^*}\cap \mathcal{M}}E(\vec{u})\\
&=\inf\left\{E(\vec{u}) \hbox{ s.t. }\vec{u}\in S(a_1,a_2)\cap B_{\max\{a_1,a_2\}D^{-1}\rho^*}\cap\mathcal{M}\right\}.    
\end{aligned}
\end{equation*}
A similar analysis to that in the proof of \cite[Theorem 1.2]{KO} and \cite[Theorem 1.4]{Soave1},
yields strict sub-additivity of $E$ on $V(a_1,a_2)=S(a_1,a_2)\cap B_{\rho^*}\cap \mathcal{M}$. Moreover, combining $m^+(a_1,a_2)<0$ with $E(\vec{u}_n)\to m^+(a_1,a_2)$, we have that $\vec{u}_n\in \mathcal{M}$.
Therefore, there exists $\vec{u}\in \mathcal G$ such that $\vec{u}_n\to \vec{u}$ in $H^{1}(\R^N,\mathbb{C}^3)$. Since the set of ground states $\mathcal G$ is  invariant under translations, and this contradicts  $\text{dist}_{H^{1}(\R^N,\mathbb{C}^3)}\left(\vec{u}_n,\mathcal G\right)\ge\varepsilon_0 >0$.\smallskip

\noindent \textup{(iii)} The third point of the Theorem follows from Lemma \ref{lem3.6}.\smallskip

\noindent \textup{(iv)} The last point follows from \eqref{ba1}.

\medskip

We turn now the attention to the mass-critical case. For $p=2_*$ we have that \textup{(i)}, i.e., existence of minimizer of $m^+(a_1,a_2)$, follows from  Lemmas \ref{lem5.11} and \ref{lem5.2};  the orbital stability of $\mathcal G$ as in \textup{(ii)}  can be proved following  \cite[Theorem 1.4]{AH} or \cite[Theorem 1.2]{KO};  \textup{(iii)} follows from Lemma \ref{lem5.3}.

\noindent We conclude with the proof of \textup{(iv)}. By recalling that the ground state has a negative energy, by using the estimate in \eqref{b3} with $p=2_*$ we obtain
\begin{equation*}
\begin{aligned}
0>E(\vec{u})&\ge \frac{1}{2}\sum^{3}_{i=1}\|\nabla u_i\|^2_2\left(1-\frac{N\left(C(N,2_*)\right)^{2+\frac{4}{N}}}{N+2} \max\left\{a_1^{\frac{4}{N}},a_2^{\frac{4}{N}}\right\}\right)\\
&  \quad-\frac{\alpha C^{3}(N,2_*)}{3}\max\left\{a^{\frac{6-N}{2}}_1,a^{\frac{6-N}{2}}_2\right\}\left(\sum^{3}_{i=1}\|\nabla u_i\|^2_2\right)^{\frac{N}{4}},\\
\end{aligned}
\end{equation*}
then, if $\alpha\to 0$, we have
\begin{equation*}
\begin{aligned}
& \frac{1}{2}\left(\sum^{3}_{i=1}\|\nabla u_i\|^2_2\right)^{1-\frac{N}{4}}\left(1-\frac{N\left(C(N,2_*)\right)^{2+\frac{4}{N}}}{N+2} \max\left\{a_1^{\frac{4}{N}},a_2^{\frac{4}{N}}\right\}\right)\\&<\frac{\alpha C^{3}(N,2_*)}{3}\max\left\{a^{\frac{6-N}{2}}_1,a^{\frac{6-N}{2}}_2\right\}\to 0.
\end{aligned}
\end{equation*}
The proof is complete.

\vskip1mm
\section{Proof of Theorem \ref{th1.2}}
In this section, for $\alpha,a_1, a_2>0$, $2_*<p<2^*$ for $N\le 3$, we study the existence and properties of the second standing wave solution of \eqref{eqA0.2}.
Define
\begin{equation*}
m^{-}(a_1,a_2)=\inf_{\vec{u}\in \mathcal{P}^{-}_{a_1,a_2}} E(\vec{u}).
\end{equation*}
 By Lemma \ref{lem2.2}, if $\max\{a_1,a_2\}<D$, we check that $\mathcal{P}^0_{a_1,a_2}$ is empty.
Similar to the proof of Lemma \ref{lem2.5}, we get that if $\max\{a_1,a_2\}<D$, then
$\inf_{\vec{u}\in \mathcal{P}^{-}_{r,a_1,a_2}} E(\vec{u})=\inf_{\vec{u}\in\mathcal{P}^{-}_{a_1,a_2}} E(\vec{u})$.
Furthermore, $\inf\limits_{\mathcal{P}^{-}_{a_1,a_2}} E(\vec{u})$ is reached by the vector function $(e^{i\theta_1}w_1, e^{i\theta_1}w_2, e^{i(\theta_1+\theta_2)}w_3 )$ where $\vec w$ satisfies $E(\vec w)=\inf\limits_{\vec u\in \mathcal{P}^{-}_{r,a_1,a_2}} E(\vec u)$ and $(\theta_1,\theta_2)\in \R^2$ and $\mathcal{P}^{-}_{r,a_1,a_2}=\mathcal{P}^{-}_{a_1,a_2}\cap H^1_r(\R^N,\R^3)$.

\begin{Lem}\label{lem3.1}
Suppose that $\max\{a_1,a_2\}<D$, $\alpha,a_1, a_2>0$, $2_*<p<2^*$ and $N\le 3$, then there exists  $\alpha_0>0$ such that
\begin{equation*}
0<m^{-}(a_1,a_2):=\inf_{\vec{u}\in \mathcal{P}^{-}_{a_1,a_2}}E(\vec{u})<\min\left\{m(a_1),m(a_2)\right\},
\end{equation*}
for any $\alpha >\alpha_0$, where $m(a_1)$ and $m(a_2)$ are defined in \eqref{c1}.
\end{Lem}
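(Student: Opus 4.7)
The plan is to prove positivity and the upper bound separately.

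\emph{Positivity of $m^-(a_1,a_2)$.} I distinguish two cases according to the sign of $C(\vec u):=\mathrm{Re}\int u_1u_2\overline{u}_3$. If $C(\vec u)>0$, then $\vec u\in\mathcal{P}^-_{a_1,a_2}\cap\mathcal{M}$, and Lemma \ref{lem2.3}(i),(iii) yields $E(\vec u)=\max_{s>0}\Psi_{\vec u}(s)$; combining the pointwise inequality $\Psi_{\vec u}(s)\ge h(s(\sum_i\|\nabla u_i\|_2^2)^{1/2})$ from \eqref{b3} with Lemma \ref{lem2.11}, this gives $E(\vec u)\ge\max_\rho h(\rho)=h(\rho^*)>0$ uniformly. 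If $C(\vec u)\le 0$, the constraint $P(\vec u)=0$ forces $A:=\sum_i\|\nabla u_i\|_2^2\le\gamma_p B$ with $B:=\sum_i\|u_i\|_p^p$, and the Gagliardo--Nirenberg inequality together with the mass constraints yields a uniform bound $A\ge A_0>0$. Rewriting $E$ via $P(\vec u)=0$ as
\[
E(\vec u)=\tfrac{p\gamma_p-2}{2p\gamma_p}A+\Bigl(\tfrac{N}{2p\gamma_p}-1\Bigr)\alpha C,
\]
and noting that the second coefficient is negative (since $p>2_*$ implies $p>3$ for $N\le 3$), both terms are nonnegative when $C\le 0$, so $E(\vec u)\ge\tfrac{p\gamma_p-2}{2p\gamma_p}A_0>0$.

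\emph{Upper bound.} Let $U_i$ denote the positive radial ground state of \eqref{b1} at mass $a_i^2$, so that $J(U_i)=m(a_i)$ (Lemma \ref{lem2.12}), and pick $\phi\in H^1(\R^N,\R)$ with $\|\phi\|_2=1$ and $C_1:=\int U_1 U_2 \phi>0$ (e.g.\ $\phi\propto U_1U_2$). For small $t>0$ define
\[
\vec u_t:=\Bigl(\sqrt{1-(t/a_1)^2}\,U_1,\ \sqrt{1-(t/a_2)^2}\,U_2,\ t\phi\Bigr)\in S(a_1,a_2)\cap\mathcal{M},
\]
so that $C(\vec u_t)=tC_1(1+o(1))$. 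By Lemma \ref{lem2.3}, $\sigma_{\vec u_t}\star\vec u_t\in\mathcal{P}^-_{a_1,a_2}$, whence $m^-(a_1,a_2)\le\max_{s>0}\Psi_{\vec u_t}(s;\alpha)$. At $t=0$, the scalar Pohozaev identities $\|\nabla U_i\|_2^2=\gamma_p\|U_i\|_p^p$ give $\vec u_0=(U_1,U_2,0)\in\mathcal{P}^-_{a_1,a_2}$ (since $\Psi''_{\vec u_0}(1)=\gamma_p B_0(2-p\gamma_p)<0$), and the fiber map $\Psi_{\vec u_0}(s)=\tfrac{s^2}{2}A_0-\tfrac{s^{p\gamma_p}}{p}B_0$ attains its unique maximum at $s=1$ with value $m(a_1)+m(a_2)$. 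For $t>0$ small, the Implicit Function Theorem (Lemma \ref{lem2.3}(iv)) ensures $\sigma_{\vec u_t,\alpha}\to 1$ as $t\to 0^+$ in the range where the local max of $\Psi_{\vec u_t}$ persists, and the envelope theorem gives $\partial_\alpha\max_s\Psi_{\vec u_t}(s;\alpha)=-\sigma_{\vec u_t,\alpha}^{N/2}\,C(\vec u_t)$. Integrating in $\alpha$ yields the expansion
\[
\max_s\Psi_{\vec u_t}(s;\alpha)=m(a_1)+m(a_2)-tC_1\alpha\bigl(1+o(1)\bigr),
\]
valid for $(t,\alpha)$ in the admissible regime. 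Fixing $t>0$ sufficiently small and setting $\alpha_0:=2\max\{m(a_1),m(a_2)\}/(tC_1)$, for every $\alpha>\alpha_0$ the right-hand side is strictly less than $\min\{m(a_1),m(a_2)\}$, whence $m^-(a_1,a_2)<\min\{m(a_1),m(a_2)\}$.

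\emph{Main obstacle.} The principal technical difficulty is to justify rigorously the envelope expansion in the joint regime $t\to 0^+$, $\alpha\to\infty$. One must verify that $\sigma_{\vec u_t,\alpha}$ stays uniformly bounded away from the threshold at which the local max and min of the fiber map $\Psi_{\vec u_t}$ merge (equivalently, that the hypothesis $\max\{a_1,a_2\}<D$ of Lemma \ref{lem2.3} continues to hold throughout the required range of $\alpha$). An equivalent reformulation, perhaps cleaner, is to set $t\alpha=\tau$ for a sufficiently large fixed $\tau$ and pass to the limit $t\to 0^+$: the limiting fiber map $\Psi_\infty(s)=\tfrac{s^2}{2}A_0-\tfrac{s^{p\gamma_p}}{p}B_0-s^{N/2}\tau C_1$ is well-defined and its maximum can be shown to be below $\min\{m(a_1),m(a_2)\}$ for the chosen $\tau$, after which a continuity/perturbation argument transfers the conclusion to $\vec u_t$ with $\alpha=\tau/t$.
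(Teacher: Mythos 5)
Your positivity argument is correct and complete; the case split on the sign of $\mathrm{Re}\int u_1u_2\overline{u}_3$ is in fact more careful than the paper's own one--line argument, which only invokes Lemma \ref{lem2.3} (stated for $\vec{u}\in\mathcal{M}$) and leaves the case $\mathrm{Re}\int u_1u_2\overline{u}_3\le 0$ implicit; your rewriting of $E$ on $\{P=0\}$ handles that case cleanly. (A minor imprecision: Lemma \ref{lem2.11} only gives $h(\rho^*)>0$, not $\max_\rho h=h(\rho^*)$, but $E(\vec{u})\ge\max_\rho h\ge h(\rho^*)>0$ is all you need.)

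The upper bound, however, has a genuine gap, and it is exactly the one you flag. The envelope identity $\partial_\alpha\max_s\Psi_{\vec{u}_t}(s;\alpha)=-\sigma_{\vec{u}_t,\alpha}^{N/2}\,C(\vec{u}_t)$ yields the expansion $\max_s\Psi_{\vec{u}_t}(s;\alpha)=m(a_1)+m(a_2)-tC_1\alpha(1+o(1))$ only if $\sigma_{\vec{u}_t,\beta}^{N/2}=1+o(1)$ uniformly for $\beta\in[0,\alpha]$; in the regime you need ($\alpha t C_1$ comparable to $\max\{m(a_1),m(a_2)\}$, so that the perturbation $-s^{N/2}\beta C(\vec{u}_t)$ of the fiber map is \emph{not} small) there is no reason for the maximizer to remain near $1$ — a priori it only lies in an interval whose endpoints themselves depend on $\alpha$. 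Moreover the linear-in-$\alpha$ decrease cannot persist: by \eqref{b3} one always has $\max_s\Psi_{\vec{u}_t}(s;\alpha)\ge\max_\rho h(\rho)>0$, so the decay saturates, and whether it drops below $\min\{m(a_1),m(a_2)\}$ before saturating is precisely what must be proved. Your closing reformulation (freeze $\tau=t\alpha$ and pass to a limiting fiber map) is a plausible repair, but its key step is only asserted (``can be shown''), and transferring a bound on $\sup_{s>0}\Psi_\infty$ to $\sup_{s>0}\Psi_{\vec{u}_t}$ again requires uniform-in-$s$ control that is not supplied.

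The paper sidesteps all of this with two choices. First, the test function has \emph{all three} components equal to positive normalized ground states of the scalar problem \eqref{b1}, with the masses split so that the vector lies in $S(a_1,a_2)$; then $\int u_1u_2u_3=K>0$ is a fixed constant independent of $\alpha$, and no small parameter $t$ enters. Second, instead of differentiating the max in $\alpha$, the $s$-range is split at a fixed small $s_0$: for $s<s_0$ the (favorable) coupling term is simply discarded and $\sum_i J(s\star u_i)\to 0$ as $s\to 0$ already puts the fiber map below $\min\{m(a_1),m(a_2)\}$, see \eqref{c1}; for $s\ge s_0$ the coupling contributes at least $\alpha K s_0^{N/2}$, which beats the uncoupled bound $\sum_i m(\|u_i\|_2)$ once $\alpha>\alpha_0$. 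The same splitting would also rescue your construction (on the set where the uncoupled fiber map exceeds $\tfrac12\min\{m(a_1),m(a_2)\}$ the variable $s$ is bounded away from $0$, so the coupling term there is at least $c\,\alpha t C_1$), but as written the quantitative step is missing.
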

\begin{proof}
Using point (i) of  Lemma \ref{lem2.11}, we can take $\rho_{\max}>0$ such that $\max_{\rho\in \R^{+}}h(\rho)=h(\rho_{\max})>0$. Therefore, there exists a strictly positive $\sigma_{\vec{u}}=\frac{ \rho_{\max} }{ \left(\sum^3_{i=1}\|\nabla u_i\|^2_2\right)^{\frac{1}{2}} }$ such that $\left(\sum^3_{i=1}\|\nabla (\sigma_{\vec{u}}\star u_i)\|^2_2\right)^{\frac{1}{2}}=\rho_{\max}$ for any $\vec{u} \in\mathcal{P}^-_{a_1,a_2}$. Points (ii), (iii) of Lemma \ref{lem2.3} indicate that $s=0$ is the unique strict maximum of $\Psi_{\vec{u}}(s)$, so we have
\begin{equation*}
E(\vec{u})=\Psi_{\vec{u}}(0)\ge \Psi_{\vec{u}}(\sigma_{\vec{u}})= E(\sigma_{\vec{u}}\star \vec{u})\ge h\left(\Big(\sum^3_{i=1}\|\nabla (\sigma_{\vec{u}}\star u_i)\|^2_2\Big)^{\frac{1}{2}}\right)=h(\rho_{\max})>0.
\end{equation*}
Consequently, we obtain $\inf_{\vec{u}\in \mathcal{P}^-_{a_1,a_2}}E(\vec{u})\ge \max_{\rho\in \R^{+}}h(\rho)>0$.
For fixed $a_1,a_2>0$, by Lemma \ref{lem2.12}, $m(b)$ is achieved by $u_0\in S(b)\cap H^1(\R^N,\R^3)$ for any $0<b$. Let $u_1$ be the positive solution of \eqref{b1} with parameter $\|u_1\|^2_2=b^2$, $u_2$ be the positive solution of \eqref{b1} with $\|u_2\|^2_2=a^2_1-b^2$ and $u_3$ be the positive solution of \eqref{b1} with $\|u_3\|^2_2=a^2_2-b^2$. We have $(u_1,u_2,u_3)\in S(a_1,a_2)$, and it is easy to see that
\begin{equation*}
J(s\star u_1)\to 0, \quad J(s\star u_2)\to 0, \quad J(s\star u_3)\to 0,\quad \text{as}\quad s\to 0^+,
\end{equation*}
see \eqref{z2} for the definition of $J$.
Therefore, there exists $s_0>0$ small enough which is independent of $\alpha$ such that
\begin{equation*}
\begin{aligned}
\max_{0<s<s_0}E(s\star(u_1,u_2,u_3))&<\max_{0<s<s_0}J(s\star u_1)+J(s\star u_2)+J(s\star u_3)\\
&<\min\left\{m(a_1),m(a_2)\right\},
\end{aligned}
\end{equation*}	
as both $m(a_1)$ and $m(a_2)$ are strictly positive.
If $s\ge s_0$, then the interaction term is bounded from below as in the following:
\begin{equation*}
\int (s\star u_1)(s\star u_2)(s\star u_3)=s^{\frac{N}{2}}\int u_1u_2u_3\ge Ks_0^{\frac N2},
\end{equation*}
where $K>0$. Thus, we have
\begin{equation*}
\begin{aligned}
	\max_{s\ge s_0}E\left(s\star(u_1,u_2,u_3)\right)&\le\max_{s\ge s_0}J(s\star u_1)+J(s\star u_2)+J(s\star u_3)-\alpha Ks_0^{\frac N2} \\
		&\le m(b)+m\left(\sqrt{a^2_1-b^2}\right)+m\left(\sqrt{a^2_2-b^2}\right)-\alpha Ks_0^{\frac N2}.
\end{aligned}
\end{equation*}
From Lemma \ref{lem2.12}, $m(b)$ is strictly decreasing for $b>0$, then $m(b)\ge \max\{m(a_1),m(a_2)\}$.
It is clear that there exists $\alpha_0>0$ such that
\begin{equation*}
\max\limits_{s\ge s_0}E(s\star(u_1,u_2,u_3))<\min\left\{m(a_1),m(a_2)\right\}      \;\; \hbox{ for all }  \;\; \alpha>\alpha_0.
\end{equation*}
Hence, the proof is complete.
\end{proof}

\begin{Lem}\label{lem3.3}
Let $\max\{a_1,a_2\}<D$, $a_1, a_2>0$, $2_*<p<2^*$ and $N\le 3$. There exists $\alpha_0>0$ such that for all $\alpha>\alpha_0$, $m^-(a_1,a_2)$ is achieved by some $\vec{v}$, which is real-valued, positive, radially symmetric and decreasing.
\end{Lem}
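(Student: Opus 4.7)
The strategy is to extract a minimizer of $m^-(a_1,a_2)$ as the strong $H^1$-limit of a minimizing Palais-Smale sequence on $\mathcal{P}^-_{r,a_1,a_2}$, using the bounds of Lemma \ref{lem3.1} to rule out loss of compactness through semi-trivial scalar profiles. By the same Schwarz-rearrangement argument as in Lemma \ref{lem2.5} -- rearrangement does not increase the Dirichlet norms, preserves the $L^p$- and $L^2$-norms, and does not decrease the three-wave interaction $\mathrm{Re}\int u_1u_2\overline{u}_3$, so that $\Psi_{\vec{w}}(s)\leq\Psi_{\vec{u}}(s)$ for every $s>0$ and hence $E(\sigma_{\vec{w}}\star\vec{w})\leq E(\sigma_{\vec{u}}\star\vec{u})$ -- I may choose a minimizing sequence $\{\vec{u}_n\}$ consisting of real-valued, nonnegative, radially symmetric and decreasing functions. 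Since $\mathcal{P}^0_{a_1,a_2}=\emptyset$ by Lemma \ref{lem2.2}, the manifold $\mathcal{P}^-_{r,a_1,a_2}$ is a $C^1$ natural constraint, so Ekeland's variational principle yields a Palais-Smale sequence at level $m^-(a_1,a_2)$ with bounded Lagrange multipliers $\lambda_{1,n},\lambda_{2,n}\in\R$.

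Next I would establish $H^1$-boundedness: combining $P(\vec{u}_n)=0$ with the expression of $E$ gives
\begin{equation*}
m^-(a_1,a_2)+o(1)=\left(\frac{1}{2}-\frac{1}{p\gamma_p}\right)\sum_{i=1}^{3}\|\nabla u_{i,n}\|_2^2-\left(1-\frac{N}{2p\gamma_p}\right)\alpha\int u_{1,n}u_{2,n}u_{3,n}.
\end{equation*}
Since $p\gamma_p>2$, and since the interaction term is controlled via \eqref{z4} by $A_2\bigl(\sum_{i=1}^{3}\|\nabla u_{i,n}\|_2^2\bigr)^{N/4}$ with $N/4<1$, the quadratic term dominates at infinity and forces $\sum_{i=1}^{3}\|\nabla u_{i,n}\|_2^2$ to remain bounded. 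Up to a subsequence, $\vec{u}_n\rightharpoonup\vec{v}$ weakly in $H^1_r(\R^N,\R^3)$, strongly in $L^r\times L^r\times L^r$ for $r\in(2,2^*)$ by the compact radial embedding, and $(\lambda_{1,n},\lambda_{2,n})\to(\lambda_1,\lambda_2)$; passing to the limit in the Euler-Lagrange equations, $\vec{v}$ is a nonnegative radial solution of \eqref{b4}.

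The main difficulty is to show that $\vec{v}$ has all three components nontrivial. The case $\vec{v}\equiv 0$ is impossible: strong $L^r$-convergence for $r\in(2,2^*)$ forces $\|u_{i,n}\|_p\to 0$, and H\"older together with interpolation gives $\int u_{1,n}u_{2,n}u_{3,n}\to 0$; from $P(\vec{u}_n)=0$ we then deduce $\sum_{i=1}^{3}\|\nabla u_{i,n}\|_2^2\to 0$, so $E(\vec{u}_n)\to 0$, contradicting the lower bound $m^-(a_1,a_2)>0$ of Lemma \ref{lem3.1}. In every semi-trivial scenario the coupling structure of \eqref{b4} makes the interaction term vanish and decouples the surviving equations into the scalar problem \eqref{b1}; the energy of the limit is therefore bounded below by $m(b_1)+m(b_2)$ or $m(b_1)$ for some $b_i\leq\max\{a_1,a_2\}$, and since $m$ is strictly decreasing by Lemma \ref{lem2.12}, this lower bound is at least $\min\{m(a_1),m(a_2)\}$. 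This contradicts the strict upper bound $m^-(a_1,a_2)<\min\{m(a_1),m(a_2)\}$ of Lemma \ref{lem3.1}, which is precisely where the hypothesis $\alpha>\alpha_0$ enters.

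Finally, $\lambda_1,\lambda_2>0$ follows from \cite[Lemma A.2]{IN}. Passing to the limit in the identity obtained by testing the equations for $\vec{u}_n$ against $\vec{u}_n$ and using $P(\vec{u}_n)=0$ yields
\begin{equation*}
\lambda_1 a_1^2+\lambda_2 a_2^2=\lambda_1\|v_1\|_2^2+\lambda_2\|v_2\|_2^2+(\lambda_1+\lambda_2)\|v_3\|_2^2,
\end{equation*}
which together with the weak-lower-semicontinuity bounds $\|v_1\|_2^2+\|v_3\|_2^2\leq a_1^2$, $\|v_2\|_2^2+\|v_3\|_2^2\leq a_2^2$ and $\lambda_1,\lambda_2>0$ forces $Q_1(\vec{v})=a_1^2$ and $Q_2(\vec{v})=a_2^2$, hence strong $L^2$-convergence. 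Combined with the strong $L^p$-convergence this upgrades to strong $H^1$-convergence, so $E(\vec{v})=m^-(a_1,a_2)$, and the continuity of $\vec{u}\mapsto\sigma_{\vec{u}}$ from Lemma \ref{lem2.3} places $\vec{v}$ in $\mathcal{P}^-_{a_1,a_2}$. Positivity of every component then follows from the strong maximum principle applied to \eqref{b4} (see \cite[Theorem 2.10]{hanq}).
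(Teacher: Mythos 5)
Your proposal is correct and follows essentially the same route as the paper: reduce to real, nonnegative, radial profiles by Schwarz rearrangement (which lowers the mountain-pass value $\max_s E(s\star\vec u)$), extract a bounded Palais--Smale sequence at level $m^-_r(a_1,a_2)$ with $P(\vec u_n)\to 0$, rule out the trivial and semi-trivial weak limits using $0<m^-(a_1,a_2)<\min\{m(a_1),m(a_2)\}$ from Lemma \ref{lem3.1}, and recover the mass constraints and strong $H^1$-convergence from the positivity of the Lagrange multipliers. The only cosmetic differences are that the paper obtains the Palais--Smale sequence by citing the min-max constructions of \cite{Soave1,JL,BJS,CZZ} rather than Ekeland on the natural constraint, and dismisses the ``two nontrivial, one trivial'' case by the structural observation that the third equation forces $u_1u_2\equiv 0$, whereas you bound its energy below by $m(b_1)+m(b_2)$; both are sound.
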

\begin{proof}
We only need to show that $m^-_r(a_1,a_2)$ is attained. If $N=2,3$,   we refer to \cite{Soave1,JL,BJS,CZZ} for the existence of a radially symmetric Palais-Smale sequence $\{\vec u_n\}$ at the level $m^-_r(a_1,a_2)$ and $P(\vec{u}_n)\to 0$. If $N=1$, combine \cite[Remark 5.2]{Soave1} with \cite[Lemma 3.1]{CZZ} with the necessary modifications.
Therefore, we can choose a non-negative and radially symmetric Palais-Smale sequence $\{\vec{u}_n\}$ for $m^-_r(a_1,a_2)$ with $P(\vec{u}_n)=o_n(1)$, that is $\lim\limits_{n\to \infty} E(\vec{u}_n)=m^-_r(a_1,a_2)$ and $E'|_{S(a_1,a_2)}\to 0$ as $n\to \infty$.
Similarly to the proof of Lemma \ref{lem2.6}, we have that sequence $\{\vec{u}_n\}$ is bounded in $H^1(\R^N,\mathbb{C}^3)$, and there exists $(u_1,u_2,u_3)$ such that $(u_{1,n},u_{2,n},u_{3,n})\rightharpoonup (u_1,u_2,u_3)$ in $H^1(\R^N,\mathbb{C}^3)$. Hence, $u_i\ge 0$ are radial functions for all $i=1,2,3$.

\vskip1mm
\noindent We claim that $u_1\not\equiv 0$, $u_2\not\equiv 0$, and $u_3\not\equiv0$.
\vskip1mm
As in Lemma \ref{lem2.6}, we can separate the analysis in three cases and the proof is similar except for second case. In this case, by the maximum principle and Brezis-Lieb Lemma (cf. \cite{BL, hanq}), we derive a contradiction to $m^-_r(a_1,a_2)<\min\{m(a_1),m(a_2)\}$ as $\alpha>\alpha_0$.

So we can apply a similar argument as the proof of Lemma \ref{lem2.6}.
Therefore, we then conclude that $\vec{u}_n\to \vec{u}$ in $H^1_r(\R^N,\R^3)$ and $E(\vec{u})=m^-_r(a_1,a_2)$.
\end{proof}

At this point, we study the semi-trivial limit behavior as $a_1> 0$ and $a_2\to 0$.
\begin{Lem}\label{lem4.4}
Let  $\alpha,a_1,a_2>0$, and $2_*<p<2^*$ for $N\le 3$. If $a_1\neq 0$ is fixed and  $a_2\to 0$ (or similarly if $a_1\to 0$ and $a_2\neq 0$ is fixed), then for the second solution $\vec{v}$ of \eqref{eqA0.2}, up to a subsequence,
we have $m^-(a_1,a_2)\to m(a_1)$, and
\begin{equation*}
\left(\tilde{\kappa}^{-\frac{1}{p-2}}v_1\left(\tilde{\kappa}^{-\frac{1}{2}}x\right),v_2(x),v_3(x)\right)\to (w_p,0, 0) \ \  \text{in}\ H^{1}(\R^N,\mathbb{C}^3),
\end{equation*}
where $\tilde{\kappa}=\left(\frac{a^2_1}{\|w_p\|^2_2} \right)^{\frac{p-2}{2-p\gamma_p}}$ and $w_p$ is the positive radial solution of $-\Delta w+w=|w|^{p-2}w$.
\end{Lem}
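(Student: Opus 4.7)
My plan is to analyze a sequence of minimizers $\vec{v}_n$ for $m^-(a_1, a_{2,n})$ with $a_{2,n} \to 0^+$ (the case $a_{1,n} \to 0$ being symmetric), extract a strong $H^1$-limit of the form $(v_1^*, 0, 0)$, and identify $v_1^*$, by the uniqueness Lemma \ref{lem2.12}, with the positive radial minimizer of the scalar problem $m(a_1)$. The convergence $m^-(a_1, a_{2,n}) \to m(a_1)$ then emerges as a byproduct.

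\smallskip
\noindent\textbf{Upper bound via a competitor.} I first show $\limsup_n m^-(a_1, a_{2,n}) \leq m(a_1)$. Let $u_{a_1}^*$ be the unique positive radial minimizer of $m(a_1)$, fix radial profiles $\phi_2, \phi_3 > 0$ in $H^1_r$ with unit $L^2$-norm, and for each $n$ take $u_n$ the minimizer of $m(\sqrt{a_1^2 - a_{2,n}^2/2})$. Setting $\vec{u}_n = (u_n, (a_{2,n}/\sqrt{2})\phi_2, (a_{2,n}/\sqrt{2})\phi_3) \in S(a_1, a_{2,n}) \cap \mathcal{M}$, one has $\vec{u}_n \to (u_{a_1}^*, 0, 0)$ in $H^1$. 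The fiber map of $(u_{a_1}^*, 0, 0)$ has a unique strict maximum at $s=1$ with value $J(u_{a_1}^*) = m(a_1)$ and $\Psi''(1) = \|\nabla u_{a_1}^*\|_2^2(2 - p\gamma_p) < 0$, so the $C^1$-dependence $\vec{u} \mapsto \sigma_{\vec{u}}$ of Lemma \ref{lem2.3}(iv) yields $\sigma_{\vec{u}_n} \to 1$ and $E(\sigma_{\vec{u}_n} \star \vec{u}_n) \to m(a_1)$.

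\smallskip
\noindent\textbf{Boundedness, weak limit, and nontriviality.} Rewriting $E(\vec{v}_n)$ using $P(\vec{v}_n) = 0$ as
\[
E(\vec{v}_n) = \frac{p\gamma_p-2}{2p\gamma_p}\sum_i\|\nabla v_{i,n}\|_2^2 - \frac{2p\gamma_p-N}{2p\gamma_p}\alpha \operatorname{Re}\int v_{1,n}v_{2,n}\overline{v}_{3,n},
\]
the H\"older and Gagliardo-Nirenberg inequalities bound the interaction by $C\,a_{2,n}^{2(1-N/6)}(\sum_i \|\nabla v_{i,n}\|_2^2)^{N/4}$; Young's inequality (since $N/4 < 1$) together with the upper bound of the previous step yields $\sum_i \|\nabla v_{i,n}\|_2^2 \leq C$. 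By radial compactness, up to a subsequence $\vec{v}_n \rightharpoonup (v_1^*, 0, 0)$ weakly in $H^1_r$ and strongly in $L^r$ for $r \in (2, 2^*)$, the identification $v_2^* = v_3^* = 0$ being forced by $\|v_{j,n}\|_2 \leq a_{2,n} \to 0$. Passing to the limit in the first Euler-Lagrange equation (with $\lambda_{1,n}$ bounded and converging to some $\lambda_1^*$ via the test against $v_{1,n}$ using $\|v_{1,n}\|_2 \to a_1 > 0$) yields $-\Delta v_1^* + \lambda_1^* v_1^* = |v_1^*|^{p-2} v_1^*$; positivity and $\lambda_1^* > 0$ follow from the maximum principle and \cite[Lemma A.2]{IN}, while $v_1^* \not\equiv 0$ follows by contradiction: if $v_1^* \equiv 0$, then $\|v_{i,n}\|_p \to 0$ for all $i$ and $P(\vec{v}_n) = 0$ would force $\sum_i \|\nabla v_{i,n}\|_2^2 \to 0$, hence $E(\vec{v}_n) \to 0$, contradicting $m^-(a_1, a_{2,n}) > 0$ from Lemma \ref{lem3.1}.

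\smallskip
\noindent\textbf{Mass preservation (main obstacle) and conclusion.} The most delicate step, which I expect to be the main obstacle, is excluding $L^2$-mass loss to conclude $\|v_1^*\|_2 = a_1$. The key is that $v_1^*$, being a positive radial solution of the scalar equation, coincides with the minimizer of $m(\|v_1^*\|_2)$ by Lemma \ref{lem2.12}, hence $J(v_1^*) = m(\|v_1^*\|_2)$. On the other hand, the splitting $E(\vec{v}_n) = \sum_i J(v_{i,n}) - \alpha \operatorname{Re}\int v_{1,n} v_{2,n}\overline{v}_{3,n}$, combined with the vanishing of $J(v_{j,n})$ for $j = 2, 3$ (from $\|v_{j,n}\|_p \to 0$ and $\|\nabla v_{j,n}\|_2$-boundedness), the decay of the interaction, and the upper bound, gives $\liminf_n J(v_{1,n}) \leq m(a_1)$; weak $H^1$ lower semicontinuity then yields $J(v_1^*) \leq m(a_1)$. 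Together with $\|v_1^*\|_2 \leq a_1$ (weak $L^2$-LSC) and the strict monotonicity of $c \mapsto m(c)$ (Lemma \ref{lem2.12}), this pinches $\|v_1^*\|_2 = a_1$ and $J(v_1^*) = m(a_1)$. Strong $L^2$-convergence of $v_{1,n}$ then follows, the scalar equation with convergent Lagrange multipliers upgrades it to strong $H^1$-convergence, and the limit of $P(\vec{v}_n) = 0$ forces $\|\nabla v_{j,n}\|_2 \to 0$ for $j = 2, 3$. Finally, uniqueness identifies $v_1^*(x) = \tilde{\kappa}^{1/(p-2)} w_p(\tilde{\kappa}^{1/2} x)$ with $\tilde{\kappa}$ as in the statement; undoing this (fixed) rescaling yields the claimed $H^1$-convergence to $w_p$, and $m^-(a_1, a_{2,n}) \to m(a_1)$ is read off.
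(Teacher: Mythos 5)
Your proof is correct, but it reaches the crucial compactness step by a genuinely different mechanism than the paper. The paper's proof goes: weak limit of a (radial, nonnegative) Palais--Smale sequence, identification of the limiting scalar equation, $\lambda_1>0$ via the Pohozaev relation $\lambda_n a_1^2=(1-\gamma_p)\|u_{1,n}\|_p^p+o_n(1)$ and nontriviality, and then \emph{strong} $H^1$-convergence obtained directly from the equation (testing the difference of the Euler--Lagrange equations against $u_{1,n}-u_1$, which is where $\lambda_1>0$ is used); mass preservation $\|u_1\|_2=a_1$ and the limit $E(\vec u_n)\to m(a_1)$ are then read off at the end, with the upper-bound information supplied only by an appeal to the continuity of $m^-$ borrowed from \cite{KO}. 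You instead first establish the quantitative upper bound $\limsup_n m^-(a_1,a_{2,n})\le m(a_1)$ by an explicit competitor $\bigl(u_n,(a_{2,n}/\sqrt2)\phi_2,(a_{2,n}/\sqrt2)\phi_3\bigr)$, and then pinch the mass via $J(v_1^*)=m(\|v_1^*\|_2)\le m(a_1)$ together with the strict monotonicity of $c\mapsto m(c)$ from Lemma \ref{lem2.12}, deducing strong convergence a posteriori. Your route is self-contained (it does not lean on the cited continuity of $m^-$) at the cost of the competitor construction; the paper's route is shorter once the equation-based compactness is accepted. Two small imprecisions in your write-up, neither fatal: (1) you cannot conclude that $J(v_{j,n})\to 0$ for $j=2,3$ from $\|v_{j,n}\|_p\to0$ and boundedness of the gradients --- only $\liminf_n J(v_{j,n})\ge 0$; fortunately this one-sided bound is exactly what your inequality $\limsup_n J(v_{1,n})\le m(a_1)$ requires. (2) Lemma \ref{lem2.3}(iv) is stated on $S(a_1,a_2)\cap\mathcal M$ and the limit point $(u_{a_1}^*,0,0)$ is not in $\mathcal M$, so $\sigma_{\vec u_n}\to1$ needs a word (local uniform convergence of the fiber maps plus nondegeneracy of the maximum of the limit fiber map); alternatively, since the interaction term enters $\Psi_{\vec u_n}$ with a negative sign on $\mathcal M$, the cruder bound $\max_s E(s\star\vec u_n)\le\max_s\bigl[\tfrac{s^2}{2}\sum_i\|\nabla u_{i,n}\|_2^2-\tfrac{s^{p\gamma_p}}{p}\sum_i\|u_{i,n}\|_p^p\bigr]\to m(a_1)$ gives the upper bound directly.
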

\begin{proof}
An analysis similar to that in the proof of \cite[Lemma 2.6]{KO} show that for $a_1,a_2\ge 0$, $m^-(a_1,a_2)$ is continuous at $(a_1,a_2)$. By Theorem \ref{th1.2}, for $a_{1,n},a_{2,n}>0$, there exists $(u_{1,n},u_{2,n},u_{3,n})\in H^1_r(\R^N,\mathbb{C}^3)\cap S(a_{1,n},a_{2,n})$ such that
\[
P(\vec{u}_{n})=o_n(1) \quad \hbox{and} \quad E(\vec{u}_{n})\to m^-(a_{1,n},a_{2,n})
\]
provided $\alpha$ is large enough. We assume that $a_{1,n}\to a_1$ and $a_{2,n}\to 0$. Then we have that $\|u_{1,n}\|^2_2\to a^2_1$, $\|u_{2,n}\|^2_2\to 0$ and $\|u_{3,n}\|^2_2\to 0$, and
$\vec{u}_n$ is a bounded sequence in $H^1(\R^N,\R^3)$. There exists $u_1\in H^1(\R^N,\R)$ such that $u_{1,n}\rightharpoonup u_1$ and $u_{2,n}\rightharpoonup 0$ and $u_{3,n}\rightharpoonup 0$. Therefore, we have $\int u_{1,n}u_{2,n}u_{3,n}\to 0$. Moreover, by the Lagrange multipliers rule there exists $\omega_n\in \R$ such that
\begin{equation*}
\int  (\nabla u_{1,n}\nabla\phi-\lambda_n u_{1,n}\phi-|u_{1,n}|^{p-2}u_{1,n}\phi)=o_n(1)\|\phi\|_{H^1(\R^N)},
\end{equation*}
for all $\phi \in H^1(\R^N,\R)$. The choice $\phi=u_{1,n}$ gives
\begin{equation*}
\lambda_n a^2_1=\|\nabla u_{1,n}\|^2_2-\|u_{1,n}\|^p_p+o_n(1).
\end{equation*}
Moreover,  the boundedness of $\{\vec{u}_n\}$ in $H^1(\R^N,\R^3)$ implies that $\{\lambda_n\}$ is bounded as well, thus $\lambda_n\to \lambda_1\in \R$.
Similarly, since $\|u_{2,n}\|^2_2\to 0$ and $\|u_{3,n}\|^2_2\to 0$, we have $u_{2,n},u_{3,n}\to 0$ in $H^1(\R^N,\R)$. Recalling that  $P(\vec{u}_n)\to 0$,
\begin{equation*}
o_n(1)=P(\vec{u}_{n})=\sum^{3}_{i=1}\|\nabla u_{i,n}\|^2_2-\gamma_p\|u_{1,n}\|^p_p+o_n(1)
=\|u_{1,n}\|^2_2-\gamma_p\|u_{1,n}\|^p_p+o_n(1),
\end{equation*}
we have
\begin{equation*}
\lambda_n a^2_1=\left(1-\gamma_p\right)\|u_{1,n}\|^p_p+o_n(1).
\end{equation*}
Since $\gamma_p<1$, we deduce that $\lambda_1\ge 0$ with equality only if $u_1\equiv 0$. But $u_1$ cannot be identically 0 because $E(\vec{u}_n)\not\rightarrow 0$. Then, up to a subsequence, $\lambda_n\to \lambda_1>0$.
By weak convergence, $u_1$ is a radial weak  solution of $-\Delta u+\lambda_1 u=|u|^{p-2}u$. We infer that
\begin{equation*}
\int (|\nabla (u_{i,n}-u_1)|^2+\lambda_1|u_{1,n}-u_1|^2)=o_n(1),
\end{equation*}
and  $u_{1,n}\to u_1$ in $H^1_r(\R^N,\R)$. In addition,
\begin{equation*}
\begin{aligned}
E(\vec{u}_n)&=\frac{1}{2}\|u_{1,n}\|^2_2-\frac{1}{p}\|u_{1,n}\|^p_p+o_n(1)=\frac{1}{2}\left(1-\frac{1}{\gamma_p}\right)\|u_{1,n}\|^p_p+o_n(1)\\
&=\frac{1}{2}\left(1-\frac{1}{\gamma_p}\right)\|u_{1}\|^p_p+o_n(1)=m(a_1)+o_n(1).
\end{aligned}
\end{equation*}
By rescaling, $u_1=\tilde{\kappa}^{\frac{1}{p-2}}w_p(\tilde{\kappa}^{\frac{1}{2}}x)$ where $\tilde{\kappa}=\left(\frac{a^2_1}{\|w_p\|^2_2} \right)^{\frac{p-2}{2-p\gamma_p}}$ and $w_p$ is the positive radial solution of $-\Delta w+w=|w|^{p-2}w$.
\end{proof}

\noindent \textbf{Proof of Theorem \ref{th1.2}.}
\textup{(i)} It follows from Lemmas \ref{lem2.5} and \ref{lem3.3} that there is a mountain-pass critical point of $E$ on $S(a_1,a_2)$.
Therefore, there exists $\vec{v}\in S(a_1,a_2)$ such that $E(\vec{v})=m^-(a_1,a_2)$. \smallskip

\noindent \textup{(ii)} It follows from  Lemma \ref{lem4.4}.

\vskip1mm
\section{Proof of Theorem \ref{th1.4}}
In this section we prove the global existence result. We observe that the following identity holds true:
\begin{equation*}\label{ident}
E(\vec{\psi})-\frac{1}{p\gamma_p}P(\vec{\psi})=\frac{1}{2}\left(1-\frac{2}{p\gamma_p}\right)\sum^{3}_{i=1}\|\nabla \psi_i\|^2_2-\alpha\left(1-\frac{1}{p-2}\right)\mathrm{Re}\int \psi_1\psi_2\overline{\psi}_3,
\end{equation*}
recalling the definition of the energy \eqref{eq:energy} and the Pohozaev functional in \eqref{poho-intro}.\\

\noindent \textbf{Proof of Theorem \ref{th1.4}.}
From \cite[Chapter 4]{CT}, we get that \eqref{eqA0.1} is locally well-posed, therefore, $\vec{\psi}\in  C\left([0,T_{\max}), H^1(\R^N,\mathbb{C}^3)\right)$ for some $T_{\max}>0$, and by the blow-up alternative $T_{\max}=+\infty$ or $\sum^{3}_{i=1}\|\nabla \psi_i(t)\|^2_2\to +\infty$ as $t\to T^{-}_{\max}$. We assume by contradiction that $\sum^{3}_{i=1}\|\nabla \psi_i(t)\|^2_2\to +\infty$ as $t\to T^{-}_{\max}$. We omit the time dependence when no confusion may arise. By the Gagliardo-Nirenberg inequality,
\begin{equation*}
\begin{aligned}
&E(\vec{\psi})-\frac{1}{p\gamma_p}P(\vec{\psi})\\
&\quad =\frac{1}{2}\left(1-\frac{2}{p\gamma_p}\right)\sum^{3}_{i=1}\|\nabla \psi_i\|^2_2-\alpha\left(1-\frac{1}{p-2}\right)\mathrm{Re}\int \psi_1\psi_2\overline{\psi}_3\\
&\quad \ge \frac{1}{2}\left(1-\frac{2}{p\gamma_p}\right)\sum^{3}_{i=1}\|\nabla \psi_i\|^2_2
-\frac{\alpha C^{3}(N,p)}{3}\max\left\{a^{\frac{6-N}{2}}_1,a^{\frac{6-N}{2}}_2\right\}\left(\sum^{3}_{i=1}\|\nabla \psi_i\|^2_2\right)^{\frac{N}{4}}.\\
\end{aligned}
\end{equation*}
Therefore, we have
\begin{equation*}
E(\vec{\psi}(t))-\frac{1}{p\gamma_p}P(\vec{\psi}(t))\to +\infty \quad \text{as} \quad t\to T^{-}_{\max},
\end{equation*}
and by conservation of the energy, it follows that $P(\vec{\psi}(t))\to -\infty$ as $t\to T^{-}_{\max}$.

\vskip1mm

\noindent We claim, with a strategy as in \cite{Soave1}, that there exists $K>0$ such that $t_{\vec{\psi}_0}<1$ for all $\vec{\psi}_0\in S(a_1,a_2)$ with $P(\vec{\psi}_0)<-K$. \\
We separate two cases.
At first, suppose that $\vec{\psi}_0\in \mathcal{M}$, then by the Gagliardo-Nirenberg inequality,
\begin{equation*}
\begin{aligned}
P(\vec{\psi}_0)&\ge\sum^{3}_{i=1}\|\nabla \psi_{0,i}\|^2_2-\gamma_pC^{p}(N,p)\max\left\{a^{\frac{p-p\gamma_p}{2}}_1,a^{\frac{p-p\gamma_p}{2}}_2\right\}\left(\sum^{3}_{i=1}\|\nabla \psi_{0,i}\|^2_2\right)^{\frac{p\gamma_p}{2}}\\
&\quad -\frac{N\alpha C^{3}(N,p)}{2}\max\left\{a^{\frac{6-N}{2}}_1,a^{\frac{6-N}{2}}_2\right\}\left(\sum^{3}_{i=1}\|\nabla \psi_{0,i}\|^2_2\right)^{\frac{N}{4}}.\\
\end{aligned}
\end{equation*}
This implies that $P(\vec{\psi}_0)\ge g\left(\left(\sum^{3}\limits_{i=1}\|\nabla \psi_{0,i}\|^2_2\right)^{\frac{1}{2}}\right)$, where
\begin{equation*}
g(y)=y^2-\gamma_pC^{p}(N,p)\max\left\{a^{\frac{p-p\gamma_p}{2}}_1,a^{\frac{p-p\gamma_p}{2}}_2\right\}y^{p\gamma_p}-\frac{N\alpha C^{3}(N,p)}{2}\max\left\{a^{\frac{6-N}{2}}_1,a^{\frac{6-N}{2}}_2\right\}y^{\frac{N}{2}}.
\end{equation*}
As in the proof of Lemma \ref{lem2.11}, under the assumption of $\max\{a_1,a_2\}<D$, there exists $R_2,R_3>0$ such that $g$ is positive on $(R_2,R_3)$. Since $\lim_{y\to 0^+}g(y)=0^-$ and $g$ is continuous, there exists $K>0$ such that $g(y)\ge-K$ on $[0,R_2]$. From Lemma \ref{lem2.3},   we get that $s_{\vec{\psi}_0}$ is the local minimizer of $\Psi_{\vec{\psi}_0}$, and hence
\begin{align*}
\inf_{s\in (0,s_{\vec{\psi}_0})}s\Psi'_{\vec{\psi}_0}(s)&=\inf_{s\in (0,s_{\vec{\psi}_0})}P(s\star\vec{\psi}_0)\\
&\ge \inf_{s\in(0,s_{\vec{\psi}_0})} g\left(s\star\left(\sum^{3}_{i=1}\|\nabla \psi_i\|^2_2\right)^{\frac{1}{2}}\right)\ge \inf_{y\in (0,R_2)}g(y)\ge -K.
\end{align*}
We assume by contradiction that $P(\vec{\psi}_0)<-K$ but $t_{\vec{\psi}_0}\ge 1$. If $1\in [s_{\vec{\psi}_0},t_{\vec{\psi}_0}]$, then we have $P(\vec{\psi}_0)=\Psi'_{\vec{\psi}_0}(1)\ge 0$, which is impossible. If $s_{\vec{\psi}_0}>1$, it follows that
\begin{equation*}
-K>P(\vec{\psi}_0)=\Psi'_{\vec{\psi}_0}(1)\ge \inf_{s\in (0,s_{\vec{\psi}_0})} s \Psi'_{\vec{\psi}_0}(s) \ge -K,
\end{equation*}
which is a contradiction.
\vskip1mm
\noindent Secondly, suppose that $\vec{\psi}_0\not\in \mathcal{M}$, let $t_{\vec{\psi}_0}$ be the unique critical point of the function $\Psi_{\vec{u}}$ which  is a strict maximum point at positive level. Then  $t_{\vec{\psi}_0}<1$ for $\vec{\psi}_0\in S(a_1,a_2)$ with $P(\vec{\psi}_0)<-K$. Thus, the proof of  the claim is complete.

\vskip1mm
\noindent Since $P\left(\vec{\psi}(t)\right)\to -\infty$ as $t\to T^{-}_{\max}$, by the above claim and Lemma \ref{lem2.3}, it gives that $t_{\vec{\psi}(T_{\max}-\varepsilon)}<1$ if $\varepsilon$ is small enough. It follows from $P(\vec{\psi}_0)>0$ that $t_{\vec{\psi}_0}>1$, and since $\vec{\psi}_0\mapsto t_{\vec{\psi}_0}$ is continuous in $H^1(\R^N,\mathbb{C}^3)$, then there exists $\tau\in (0,T_{\max})$ such that $t_{\vec{\psi}(\tau)}=1$, i.e., $\vec \psi(\tau)\in \mathcal{P}^-_{a_1,a_2}$. The conservation of the energy and  the assumption on $E(\vec{\psi}_0)$ yields
\begin{equation*}
\inf_{\vec u\in \mathcal{P}^-_{a_1,a_2}}E(\vec u)>E(\vec{\psi}_0)=E(\vec\psi(\tau))\ge \inf_{\vec u\in \mathcal{P}^-_{a_1,a_2}}E(\vec u),
\end{equation*}
which is a contradiction.

\section{Proof of Theorem \ref{thm:blowup}}

In this last section, we prove that the conditions in Theorem \ref{thm:blowup} are sufficient to have formation of singularities in finite time, as well as the instability result. 
\begin{Lem}\label{lem5.4}
Under the assumption of Theorem \ref{th1.2}, let $\vec{\psi}(t)$ be the solution of \eqref{eqA0.1} with initial datum $\vec{\psi}_0 \in S(a_1,a_2)$, $P(\vec{\psi}_0)<0$ and $E(\vec{\psi}_0)<\inf E(\vec v)$. Then there exists $\eta>0$ such that $P(\vec \psi (t))\leq -\eta<0$ for any $t$ in the maximal time of existence.
\end{Lem}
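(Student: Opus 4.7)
The plan is to combine the conservation laws along the Cauchy flow of \eqref{eqA0.1} with the Pohozaev manifold structure from Lemmas \ref{lem2.2}--\ref{lem2.3}, arguing by contradiction in a spirit parallel to the proof of Theorem \ref{th1.4}. By conservation of $Q_1,Q_2$, and $E$, the solution satisfies $\vec\psi(t)\in S(a_1,a_2)$ and $E(\vec\psi(t))=E(\vec\psi_0)$ for every $t\in[0,T_{\max})$. Setting $\delta:=E(\vec v)-E(\vec\psi_0)>0$, the goal is to produce $\eta=\eta(\delta)>0$ with $P(\vec\psi(t))\leq-\eta$ throughout the maximal interval.

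First, I would establish the strict sign preservation $P(\vec\psi(t))<0$. Assume toward contradiction that $t^*>0$ is the first time at which $P(\vec\psi(t^*))=0$; then $\vec\psi(t^*)\in\mathcal{P}_{a_1,a_2}$, and Lemma \ref{lem2.2} (giving $\mathcal{P}^0_{a_1,a_2}=\emptyset$) leaves only $\vec\psi(t^*)\in\mathcal{P}^+_{a_1,a_2}\cup\mathcal{P}^-_{a_1,a_2}$. The possibility $\vec\psi(t^*)\in\mathcal{P}^-_{a_1,a_2}$ is ruled out by the strict energy gap: otherwise $E(\vec\psi(t^*))\geq\inf_{\mathcal{P}^-_{a_1,a_2}}E=m^-(a_1,a_2)=E(\vec v)$, contradicting conservation. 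To exclude $\vec\psi(t^*)\in\mathcal{P}^+_{a_1,a_2}$, I use the $C^1$ continuity of $\vec u\mapsto\sigma_{\vec u}$ supplied by Lemma \ref{lem2.3}(iv): the hypotheses $P(\vec\psi_0)<0$ and $E(\vec\psi_0)<E(\vec v)$ place $\vec\psi_0$ in the past-the-mountain-pass region $\sigma_{\vec\psi_0}<1$, whereas $\vec\psi(t^*)\in\mathcal{P}^+_{a_1,a_2}$ forces $s_{\vec\psi(t^*)}=1<\sigma_{\vec\psi(t^*)}$. The intermediate value theorem then produces $\tau\in(0,t^*)$ with $\sigma_{\vec\psi(\tau)}=1$, i.e.\ $\vec\psi(\tau)\in\mathcal{P}^-_{a_1,a_2}$, recovering the previous energy-gap contradiction.

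Second, granted the strict sign preservation, the uniform bound $P(\vec\psi(t))\leq -\eta$ follows by a compactness/continuity argument. If no such $\eta>0$ existed, one extracts $t_n\in[0,T_{\max})$ with $P(\vec\psi(t_n))\to 0^-$. Using the identity \eqref{ident} together with Gagliardo--Nirenberg (exactly as in the opening of the proof of Theorem \ref{th1.4}), either $\sum_i\|\nabla\psi_i(t_n)\|_2^2$ stays bounded --- in which case, after passing to a (radial or, by the $|x|\vec\psi_0\in L^2$ hypothesis, localised) subsequential limit $\vec u_\infty$, one obtains $E(\vec u_\infty)\leq E(\vec\psi_0)<E(\vec v)$ and $P(\vec u_\infty)=0$, so $\vec u_\infty\in\mathcal{P}_{a_1,a_2}$ and the first-step dichotomy reapplied to $\vec u_\infty$ gives a contradiction --- or $\sum_i\|\nabla\psi_i(t_n)\|_2^2\to\infty$, in which case \eqref{ident} forces $P(\vec\psi(t_n))\to -\infty$, directly contradicting $P(\vec\psi(t_n))\to 0^-$.

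The main obstacle is the exclusion of $\vec\psi(t^*)\in\mathcal{P}^+_{a_1,a_2}$: one has to justify carefully that the initial condition $P(\vec\psi_0)<0$ with $E(\vec\psi_0)<E(\vec v)$ actually sits in the component $\{\sigma_{\vec u}<1\}$ of $\{P<0\}\cap\{E<E(\vec v)\}$ (and not in the alternative $\{s_{\vec u}>1\}$-component), so that the continuous trajectory cannot reach $\mathcal{P}^+$ without first crossing $\mathcal{P}^-$. This dichotomy is precisely what separates the blow-up regime of Theorem \ref{thm:blowup} from the global-existence regime of Theorem \ref{th1.4}.
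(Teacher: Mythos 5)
Your proposal has a genuine gap in its second step, and it is precisely the step where the paper's proof does all the work. The paper's argument is a single quantitative inequality borrowed from Soave: whenever the global maximum point $\sigma_{\vec u}$ of the fiber map $\Psi_{\vec u}$ satisfies $\sigma_{\vec u}<1$, the concavity and monotonicity of $\Psi_{\vec u}$ on $(\sigma_{\vec u},+\infty)$ give
\[
P(\vec u)=\Psi_{\vec u}'(1)\le \Psi_{\vec u}(1)-\Psi_{\vec u}(\sigma_{\vec u})\le E(\vec u)-\inf_{\mathcal{P}^-_{a_1,a_2}}E ,
\]
so that $\eta:=\inf_{\mathcal{P}^-_{a_1,a_2}}E-E(\vec\psi_0)>0$ is handed over for free by conservation of the energy, and the sign preservation and the uniform bound come simultaneously. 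You never produce any such quantitative lower bound on $|P|$, and your compactness substitute fails: if $P(\vec\psi(t_n))\to 0^-$ with bounded gradients, a weak (even radial) subsequential limit $\vec u_\infty$ does not satisfy $P(\vec u_\infty)=0$, $E(\vec u_\infty)=E(\vec\psi_0)$, or $\vec u_\infty\in S(a_1,a_2)$ --- only lower semicontinuity of the gradient terms and possible mass loss are available. Worse, even granting all of that, "reapplying the first-step dichotomy to $\vec u_\infty$" yields no contradiction: the set $\{P=0\}\cap\{E<E(\vec v)\}$ is far from empty, since every ground state in $\mathcal G\subset\mathcal{P}^+_{a_1,a_2}$ satisfies $P=0$ and $E=m^+(a_1,a_2)<0<m^-(a_1,a_2)=E(\vec v)$. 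Your exclusion of $\mathcal{P}^+$ in step one is a dynamical intermediate-value argument along the trajectory (forcing a crossing of $\mathcal{P}^-$ first) and cannot be applied to a static limit point. So the uniform $\eta$ is not obtained.

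Two further remarks. First, the obstacle you flag at the end --- deciding whether $P(\vec\psi_0)<0$, $E(\vec\psi_0)<E(\vec v)$ places $\vec\psi_0$ in $\{\sigma_{\vec u}<1\}$ rather than $\{s_{\vec u}>1\}$ --- is real and you leave it unresolved; note that Soave's inequality genuinely requires $\sigma_{\vec u}<1$ (for $\vec u=s\star\vec u_g$ with $\vec u_g\in\mathcal G$ and $s\to 1^-$ one has $P(\vec u)\to 0$ while $E(\vec u)-m^-\to m^+-m^-<0$, so the inequality fails on the small component). The published proof applies the claim assuming only $P(\vec\psi_0)<0$, so this is a shared weakness rather than a defect specific to your write-up, but a complete proof must address it. Second, your step one tacitly assumes $\vec\psi(t)\in\mathcal M$ for all $t$, so that $s_{\vec\psi(t)}$ and $\sigma_{\vec\psi(t)}$ are defined and continuous; outside $\mathcal M$ the fiber map has a single critical point and the argument must be adapted, as the paper does in the proof of Theorem \ref{th1.4}.
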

\begin{proof}
Similar to the proof of Lemma \ref{lem2.11}, $t_{\vec{\psi}_0}$ is the unique global maximal point of $\Psi_{\vec{\psi}_0}$, and $\Psi_{\vec{\psi}_0}$ is strictly decreasing and concave in $(t_{\vec{\psi}_0},+\infty)$,  see \eqref{eq:fiber}  for the definition of $\Psi_{\vec \psi_0}$. From \cite[Section 10]{Soave1},
we have the following claim, if $\vec{\psi}_0\in S(a_1,a_2)$ and $t_{\vec{\psi}_0}\in (0,1)$, then
\begin{equation}\label{x5}
P(\vec{\psi}_0)\le E(\vec{\psi}_0)-\inf\limits_{\vec u\in \mathcal{P}^-_{a_1,a_2}}E(\vec u).
\end{equation}
Let $\vec{\psi}(t)$ be the solution of \eqref{eqA0.1} with initial datum $\vec{\psi}(0):=\vec{\psi}_0$, defined on the interval $[0, T_{\max})$. By continuity, and $P(\vec{\psi}_0)<0$, provided $t$ is sufficiently small we have $P(\vec{\psi}(t))<0$. Therefore, from \eqref{x5},
\begin{equation}\label{P-upper-bound}
P(\vec{\psi}(t))\le  E(\vec{\psi}(t))-\inf\limits_{\vec u\in \mathcal{P}^-_{a_1,a_2}}E(\vec u)=E(\vec{\psi}_0)-\inf\limits_{\vec u\in \mathcal{P}^-_{a_1,a_2}}E(\vec u)=:-\eta<0,
\end{equation}
for any $t$. Hence, we deduce from the continuity that $P(\vec{\psi}(t))<-\eta$ for all $t\in [0,T_{\max})$.
\end{proof}
 The next result is a refinement of the Lemma \ref{lem5.4}.
\begin{Lem}\label{lem6}
Under the same hypothesis of Lemma \ref{lem5.4}, there exists a positive constant $\delta>0$ such that
\[
P(\vec\psi(t))\leq -\delta\sum_{i=1}^3\|\nabla \psi_i(t)\|_{2}^2.
\]
\end{Lem}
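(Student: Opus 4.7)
The plan is to promote the uniform absolute bound $P(\vec{\psi}(t))\le -\eta$ from Lemma \ref{lem5.4} to an estimate proportional to the gradient part $A(t):=\sum_{i=1}^{3}\|\nabla \psi_i(t)\|_{2}^{2}$, by using identity \eqref{ident}. Multiplying \eqref{ident} by $p\gamma_p$, recalling $\frac{p\gamma_p}{p-2}=\frac{N}{2}$, and invoking conservation of energy $E(\vec{\psi}(t))=E(\vec{\psi}_0)$, the identity rearranges to
\begin{equation*}
-P(\vec{\psi}(t)) \;=\; \frac{p\gamma_p-2}{2}\, A(t) \;-\; \frac{N(p-3)}{2}\,\alpha\,\mathrm{Re}\!\int \psi_1\psi_2\overline{\psi_3}\,dx \;-\; p\gamma_p\, E(\vec{\psi}_0).
\end{equation*}
For $2_{*}<p<2^{*}$ with $N\le 3$ one checks $p>3$, hence both coefficients $(p\gamma_p-2)/2$ and $N(p-3)/2$ are strictly positive.

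I would then divide by $A(t)$ and control the trilinear interaction by the Gagliardo--Nirenberg bound already used in \eqref{z4},
\begin{equation*}
\Big|\alpha\,\mathrm{Re}\!\int \psi_1\psi_2\overline{\psi_3}\,dx\Big|\;\le\; A_2\, A(t)^{N/4},\qquad A_2=\tfrac{\alpha}{3}C^3(N,p)\max\{a_1,a_2\}^{(6-N)/2},
\end{equation*}
which yields
\begin{equation*}
-\frac{P(\vec{\psi}(t))}{A(t)} \;\ge\; \frac{p\gamma_p-2}{2}\;-\;\frac{p\gamma_p\,|E(\vec{\psi}_0)|}{A(t)}\;-\;\frac{N(p-3)}{2}\,A_2\, A(t)^{\frac{N}{4}-1}.
\end{equation*}
Since $N<4$, the last two terms tend to $0$ as $A(t)\to\infty$, so there exists a threshold $M=M(\vec{\psi}_0,N,p,\alpha,a_1,a_2)>0$ such that $-P(\vec{\psi}(t))/A(t)\ge (p\gamma_p-2)/4$ whenever $A(t)\ge M$. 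On the complementary range $A(t)<M$, the uniform negativity $-P(\vec{\psi}(t))\ge \eta$ from Lemma \ref{lem5.4} gives the elementary bound $-P(\vec{\psi}(t))/A(t)\ge \eta/M$. Taking $\delta:=\min\{(p\gamma_p-2)/4,\,\eta/M\}>0$ then concludes the argument for every $t$ in the maximal interval of existence.

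The only delicate point is checking that the correction coming from the energy term is genuinely subdominant for large $A(t)$: this is where the fact that $E(\vec{\psi}_0)$ is a finite constant (bounded above by $E(\vec{v})$ from the hypothesis, and finite by the local well-posedness of \eqref{eqA0.1}) is used, ensuring that the ratio $|E(\vec{\psi}_0)|/A(t)$ is small once $A(t)$ is large. Apart from this, the strategy is a routine two-regime argument pivoted on identity \eqref{ident} and the Gagliardo--Nirenberg estimate \eqref{z4} already at our disposal.
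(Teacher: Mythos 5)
Your proposal is correct and rests on exactly the same ingredients as the paper's proof: identity \eqref{ident}, conservation of the energy, the Gagliardo--Nirenberg bound on the trilinear term with the subquadratic exponent $N/4<1$, and the uniform bound $P(\vec\psi(t))\le-\eta$ from Lemma \ref{lem5.4}. The only difference is presentational: the paper absorbs the cubic term via a generalized Young inequality into $\tfrac{\delta}{2}\sum_i\|\nabla\psi_i\|_2^2+C\delta$ and then chooses $\delta$ small, whereas you split into the regimes where $\sum_i\|\nabla\psi_i(t)\|_2^2$ is large (quadratic term dominates) or bounded (the $-\eta$ bound applies), which amounts to the same estimate.
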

\begin{proof}
From the proof of Lemma \ref{lem5.4} we already know that there exists a positive $\eta>0$ such that $P(\vec\psi(t))\leq -\eta$ in the maximal time of existence of the solution, see \eqref{P-upper-bound}. By the algebraic relation (we omit the time dependence on $\psi_i$)
\[
E(\vec\psi)-\frac{1}{p\gamma_p}P(\vec\psi)=\frac{1}{2}\left(1-\frac{2}{p\gamma_p}\right)\sum^{3}_{i=1}\|\nabla \psi_i\|^2_2-\alpha\left(1-\frac{1}{p-2}\right)\mathrm{Re}\int \psi_1\psi_2\overline{\psi}_3,
\]
we have that
\[
\sum^{3}_{i=1}\|\nabla \psi_i\|^2_2=\frac{2p\gamma_p}{p\gamma_p-2}\left( E(\vec\psi)-\frac{1}{p\gamma_p}P(\vec\psi)+\frac{\alpha(p-3)}{p-2}\mathrm{Re}\int \psi_1\psi_2\overline{\psi}_3 \right).
\]
Therefore,
\begin{equation}\label{PGE-ide}
P(\vec\psi)+\delta\|\nabla \vec\psi\|_{2}^2=\left( 1 -\frac{2\delta}{p\gamma_p-2} \right)P(\vec \psi) +\frac{2\delta p\gamma_p}{p\gamma_p-2}E(\vec\psi)+\frac{2\alpha\delta p\gamma_p(p-3)}{(p\gamma_p-2)(p-2)}\mathrm{Re}\int \psi_1\psi_2\overline{\psi}_3.
\end{equation}
By the H\"older and the Gagliardo-Nirenberg  inequalities, jointly with the conservation of the masses, see \eqref{eq:cons-masses},
\[
\int \psi_1\psi_2\overline{\psi}_3\leq\|\psi_1\|_{3}\|\psi_2\|_{3}\|\psi_3\|_{3}\lesssim \left(\|\nabla \psi_1\|_{2} \|\nabla \psi_2\|_{2}\|\nabla \psi_3\|_{2}\right)^{N/6}\lesssim \left(\sum_{i=1}^3\|\nabla \psi_i\|_{2}^2\right)^{N/4}.
\]
For $N=2,3$, $N/4<1$, and hence by the generalized Young's inequality
\[
\frac{2\delta\alpha\delta p\gamma_p(p-3)}{(p\gamma_p-2)(p-2)}\mathrm{Re}\int \psi_1\psi_2\overline{\psi}_3\leq \frac\delta2\sum_{i=1}^3\|\nabla \psi_i\|_{2}^2+C\delta.
\]
By inserting the above estimate in \eqref{PGE-ide}, and using the conservation of the energy, we get
\[
P(\vec\psi(t))+\frac\delta2\sum_{i=1}^3\|\nabla \psi_i(t)\|_{2}^2\leq-\left( 1 -\frac{2\delta}{p\gamma_p-2} \right)\sigma+\delta C,
\]
so by choosing $\delta$ sufficiently small we obtain the desired result, as the right-hand side can be made strictly negative uniformly in time.
\end{proof}

\subsection{Proof of Theorem \ref{thm:blowup}}We can now prove the blow-up results.
Define
\begin{equation}\label{eq:virdef}
I(t)=\sum_{i=1}^3\int \varphi|\psi_i(t)|^2\,dx
\end{equation}
for a smooth, real, non-negative,  time independent function $\varphi=\varphi(x)$. By differentiating twice in time and using \eqref{eqA0.1}, we get (we omit the time dependence on $\psi_i$)
\begin{equation}\label{eq:vir-2}
\begin{aligned}
I^{\prime\prime}(t)&= \sum_{i=1}^3 \left(4\mathrm{Re}\left\{\int\nabla^2\varphi  \nabla \psi_i\nabla\overline \psi_i\right\}-\int\Delta^2 \varphi |\psi_i|^2-2\left(1-\frac2p\right)\int\Delta\varphi |\psi_i|^p\right)\\
&-2\alpha\mathrm{Re}\int\Delta\varphi \psi_1\psi_2\overline\psi_3.
\end{aligned}
\end{equation}
By plugging $\varphi=|x|^2$ in \eqref{eq:virdef}, and using \eqref{eq:vir-2} along with Lemma \ref{lem5.4}, after integrating in time twice we obtain
\begin{equation*}
0\le I(t) \le - 8\eta t^2+O(t) \qquad\forall\, t\in [0,T_{\max}),
\end{equation*}
and a convexity argument gives $T_{\max}<\infty$.
\medskip

We now consider  radial solutions. Let $\chi: [0,\infty) \rightarrow [0,\infty)$ be a  $C^\infty$, non-negative function satisfying
 \begin{align*} 
 \chi(r):= \left\{
 \begin{array}{ccl}
 r^2 &\text{if}& 0\leq r \leq 1, \\
 \text{const.} &\text{if}& r\geq 2,
 \end{array}
 \right.
 \quad \chi''(r) \leq 2, \quad \forall\, r \geq 0.
 \end{align*}
 Given $R>1$, we define by rescaling, the radial function $\varphi_R: \R^N \rightarrow \R$ by
 \begin{align*} 
  \varphi_R(x) = \varphi_R(r) := R^2 \chi(r/R).
 \end{align*}
If $\varphi$ is radial and $\vec \psi$ is also radial, then
\begin{align*}
I^{\prime\prime}(t)&= \sum_{i=1}^3\left( 4\int \varphi''_R(r) |\nabla\psi_i|^2 -\int \Delta^2 \varphi_R|\psi_i|^2   -2\left(1-\frac2p\right)\int\Delta\varphi_R |\psi_i|^p\right) \\
 & -2\alpha\mathrm{Re} \int\Delta\varphi_R \psi_1\psi_2\overline\psi_3\\
&= 8\sum_{i=1}^3 \int |\nabla\psi_i|^2 +4\sum_{i=1}^3 \int (\varphi''_R(r)-2) |\nabla\psi_i|^2 -\sum_{i=1}^3\int \Delta^2 \varphi_R|\psi_i|^2  \\
&+2\left(1-\frac2p\right)\sum_{i=1}^3 \int(2N-\Delta\varphi_R )|\psi_i|^p-4N\left(1-\frac2p\right)\sum_{i=1}^3 \int |\psi_i|^p \\
&+2\alpha\mathrm{Re} \int(2N-\Delta\varphi_R ) \psi_1\psi_2\overline\psi_3-4N\alpha2\mathrm{Re}\int \psi_1\psi_2\overline\psi_3.
\end{align*}
By using the properties of the localization function $\varphi_R$, and the conservation of masses (namely, the quantities $Q_1$ and $Q_2$, see \eqref{eq:cons-masses}), we estimate
\begin{equation}\label{eq:vir1}
\begin{aligned}
I^{\prime\prime}(t)&\leq 8\sum_{i=1}^3 \int |\nabla\psi_i|^2 +CR^{-2}-4N \left(1-\frac2p\right)\sum_{i=1}^3\int |\psi_i|^p-8N\alpha\mathrm{Re}\int \psi_1\psi_2\overline\psi_3\\
&+C\sum_{i=1}^3 \int_{|x|\geq R}|\psi_i|^p+2\alpha\int_{|x|\geq R}|\psi_1\psi_2\overline\psi_3| \\
&=8P(\vec \psi)+CR^{-2}+C\sum_{i=1}^3 \int_{|x|\geq R}|\psi_i|^p+2\alpha\int_{|x|\geq R}|\psi_1\psi_2\overline\psi_3|.
\end{aligned}
\end{equation}

\noindent To estimate the last term, we recall the following radial Sobolev embedding (see e.g. \cite{COza}): for a radial function $f\in H^1(\R^N)$, we have  for $\frac12\leq s<1$ and $N\geq 2$,
\begin{align} \label{rad-sobo}
\sup_{x \ne 0} |x|^{\frac N2-s} |f(x)| \leq C\|\nabla f\|^{s}_{2} \|f\|^{1-s}_{2}.
\end{align}
Thanks to \eqref{rad-sobo} and the conservation of mass, we estimate with $s=\frac12$,
\begin{equation}\label{eq:strauss}
\begin{aligned}
\int_{|x|\geq R} |\psi_i|^{p} &=\int_{|x|\geq R} |\psi_i|^2|\psi_i|^{p-2}\lesssim \left( R^{-\frac{(N-1)}{2}}\|\nabla \psi_i\|_{2}^{1/2}\|\psi_i\|_{2}^{1/2}\right)^{p-2} \|\psi_i\|_{2}^2\\
&\lesssim R^{-\frac{(N-1)(p-2)}{2}}\|\nabla \psi_i\|_{2}^{(p-2)/2}.
\end{aligned}
\end{equation}
By H\"older and Cauchy-Schwarz inequalities, and by  \eqref{eq:strauss} with $p=3$ we get
\begin{equation}\label{strauss-p3}
\int_{|x|\geq R}|\psi_1\psi_2\overline\psi_3| \lesssim R^{-\frac{N-1}{2}} \sum_{i=1}^3 \|\nabla \psi_i\|_{2}^{1/2}.
\end{equation}
Hence, from \eqref{eq:vir1}, \eqref{eq:strauss}, and \eqref{strauss-p3} we get
\begin{equation}\label{eq:vir-est}
I^{\prime\prime}(t)\leq8P(\vec\psi)+CR^{-2} +C R^{-\frac{(N-1)(p-2)}{2}}\sum_{i=1}^3 \|\nabla \psi_i\|_{2}^{(p-2)/2}+ R^{-\frac{N-1}{2}} \sum_{i=1}^3 \|\nabla \psi_i\|_{2}^{1/2}.
\end{equation}
Let us observe that in dimension $N=3$ it holds true that  $\frac{p-2}{2}<2$ provided $p<6=2+\frac{4}{N-2}=p^*$, which fits our assumption in the three-dimensional setting. When $N=2$, we must restrict the range of the non-linearity to $p\in(4,6)$. See also Ogawa and Tsutsumi \cite{OgTs}.
\vskip2mm
A convexity argument yields the blow-up result, by glueing together \eqref{eq:vir-est},  \eqref{P-upper-bound} and Lemma \ref{lem6}, provided $R$ is large enough.

\begin{Rem}\rm
In the three-dimensional case, the radial symmetry can be further relaxed to a cylindrical symmetric setting, provided we impose  partial weighted $L^2$-summability of the initial data, see the first author's results in \cite{ADF, DF, BF-CV, BFG, For}.
\end{Rem}

\subsection{Proof of Corollary \ref{cor:insta} }
Let $\vec v$  be  the excited state  constructed in Theorem \ref{th1.2}, point \textup{(i)}. For any $s>0$, let $\vec{v}_s:=s\star\vec{v}$, and let $\vec{\psi}_s$ be the solution to \eqref{eqA0.1} with the initial datum $\vec{v}_s$.   Then, $\vec{v}_s\to u$ as $s\to 1^+$. By Lemma \ref{lem5.4}, it is sufficient to prove that $\vec{\psi}_s$ blows-up in finite time. In fact, it follows from \cite{BL1} that $\vec{v}\in H^1(\R^N,\R^3)$ decays exponentially at infinity, and hence $|x|\vec{v}\in L^2(\R^N,\R^3)$. Let $\sigma_{\vec{v}_s}$ be defined in Lemma \ref{lem2.3}, we have
\begin{equation*}
E(\vec{v}_s)=E(s\star\vec{v})<E(\sigma_{\vec{v}_s}\star \vec{v})=\inf_{\vec v\in \mathcal{P}^-_{a_1,a_2}}E(\vec v),
\end{equation*}
because $P(\vec{v}_s)<0$. The proof of Corollary \ref{cor:insta} is completed.

\subsection*{Conflict of interest}\rm
On behalf of all authors, the corresponding author states that there is no conflict of interest.  
\subsection*{Data availability statement }\rm
No data associated to this paper.
\subsection*{Acknowledgements}\rm 
The authors warmly thank the anonymous referee for valuable comments and suggestions which allowed to improve a previous version of the paper.\\
L. Forcella is member of the GNAMPA of the INdAM (Istituto Nazionale di Alta Matematica). X. Luo and T. Yang are supported by NNSF of China (Grant No. 12471103 and No. 12201564), Anhui Provincial Natural Science Foundation (No.2308085MA05), the Fundamental Research Funds for the Central Universities of China (No. JZ2025HGTG0255) and the Open Research Fund of Hubei Key Laboratory of Mathematical Sciences (Central China Normal University), Wuhan 430079, P. R. China. X.L. Yang is supported by NNSF of China (No. 12401130), the Postdoctoral Fellowship Program of CPSF (No. GZC20240405) and the  China Postdoctoral Science Foundation (No.2024M760761).

\end{document}